\setlist{nosep}
\newtheorem{theorem}{Theorem}[section]
\newtheorem{corollary}[theorem]{Corollary}
\newtheorem{lemma}[theorem]{Lemma}
\newtheorem{proposition}[theorem]{Proposition}
\newtheorem{fact}[theorem]{Fact}
\newtheorem{claim}{Claim}[theorem]
\newtheorem{thm}[theorem]{Theorem}
\newtheorem{cor}[theorem]{Corollary}
\newtheorem{prop}[theorem]{Proposition}
\theoremstyle{definition}
\newtheorem{definition}[theorem]{Definition}
\newtheorem{notation}[theorem]{Notation}
\newtheorem{convention}[theorem]{Convention}
\newtheorem{remark}[theorem]{Remark}
\newtheorem{example}[theorem]{Example}
\newtheorem{hypothesis}[theorem]{Hypothesis}
\newenvironment{hyp}{\begin{hypothesis}}{\end{hypothesis}}
\newtheorem{rem}[theorem]{Remark}
\newcommand{\Aa}{\mathbb{A}}
\newcommand{\Pp}{\mathbb{P}}
\newcommand{\Zz}{\mathbb{Z}}
\newcommand{\cL}{\mathcal{L}}
\newcommand{\fU}{\mathfrak{U}}
\newcommand{\fd}{\mathfrak{d}}
\newcommand{\rg}{\mathrm{rg}}
\newcommand{\Lrg}{\cL_{\rg}}
\newcommand{\PTC}[1][T]{\mathrm{P}#1\mathrm{C}}
\newcommand{\SPTC}[1][I]{\mathrm{P}T\mathrm{C}_{#1}}
\newcommand{\PIC}[1][I]{\ensuremath{\mathrm{P}#1\mathrm{C}}}
\newcommand{\bSPTC}{\PIC_\fd}
\newcommand{\substr}{\leq}
\newcommand{\sm}[1]{#1_{\mathrm{sm}}}
\newcommand{\subsel}{\preccurlyeq}
\newcommand{\supsel}{\succcurlyeq}
\newcommand{\ACF}{\mathrm{ACF}}
\newcommand{\imp}{\ensuremath{\Rightarrow}}
\newcommand{\sminus}{\smallsetminus}
\newcommand{\compl}[1]{\widehat{#1}}
\newcommand{\sep}[1]{#1^{\mathrm{s}}}
\renewcommand{\mid}{:}
\newcommand{\comp}{\circ}
\newcommand\isom{\simeq}
\newcommand{\acl}{\mathrm{acl}}
\newcommand{\dcl}{\mathrm{dcl}}
\newcommand{\tp}{\mathrm{tp}}
\newcommand{\TP}{\mathcal{S}}
\newcommand\qfequiv{\equiv^{\mathrm{qf}}}
\newcommand{\qftp}{\mathrm{qftp}}
\newcommand{\Gal}{\mathrm{Gal}}
\newcommand{\bdn}{\mathrm{bdn}}
\newcommand{\lindep}{\downfree^{\ell}}
\newcommand{\aindep}{\downfree^{\mathrm{a}}}
\newcommand{\ind}{\downfree}
\newcommand{\tensor}{\otimes}
\newcommand{\alg}[1]{#1^{\mathrm{a}}}
\newcommand{\res}{\mathrm{res}}
\newcommand{\ignore}[1]{}
\newcommand{\restr}[2]{\left.#1\right|_{#2}}
\newcommand\aut{\mathrm{aut}}
\newcommand\Ld{\cL_\fd}
\newcommand\Td{T'}
\newcommand\Th{\mathrm{Th}}
\newcommand\RV{\mathrm{RV}}
\newcommand\val{\mathrm{val}}
\newcommand\Val{\mathcal{O}}
\newcommand\m{\mathfrak{m}}
\newcommand\K{\mathrm{K}}
\newcommand\R{\mathrm{R}}
\newcommand\PAC{\mathrm{PAC}}
\newcommand\PRC{\mathrm{PRC}}
\newcommand\PpC{\mathrm{P}p\mathrm{C}}
\newcommand\RCF{\mathrm{RCF}}
\newcommand\pCF{p\mathrm{CF}}
\newcommand\ACVF{\mathrm{ACVF}}
\newcommand\DCF[1]{\mathrm{DCF}_{#1}}
\newcommand\Spec{\mathrm{Spec}}
\newcommand\trdeg{\mathrm{trdeg}}
\newcommand\Vg{\Gamma}
\renewcommand\k{\mathrm{k}}
\newcommand\NTP{\mathrm{NTP}}
\renewcommand\epsilon{\varepsilon}
\title{Pseudo \(T\)-closed fields}
\author{Samaria Montenegro}
\address{Samaria Montenegro, Escuela de
    Matem\'atica-CIMPA, Universidad de Costa Rica, San José, Costa Rica}
\thanks{The first author was partially
supported by Proyecto Semilla 821-C0-464, Vicerrector\'ia de Investigaci\'on, UCR}
\author{Silvain Rideau-Kikuchi}
\thanks{The second author was partially supported by GeoMod AAPG2019 (ANR-DFG), Geometric and Combinatorial Configurations in Model Theory}
\address{Silvain Rideau-Kikuchi, DMA, École normale supérieure, Université PSL, CNRS, 75005 Paris, France}
\begin{document}
\begin{abstract}
Pseudo algebraically closed, pseudo real closed, and pseudo $p$-adically closed
fields are examples of unstable fields that share many similarities, but have
mostly been studied separately. In this text, we propose a unified framework
for studying them: the class of pseudo $T$-closed fields, where $T$ is an
enriched theory of fields. These fields verify a "local-global" principle for
the existence of points on varieties with respect to models of $T$. This
approach also enables a good description of some fields equipped with multiple
$V$-topologies, particularly pseudo algebraically closed fields with a finite
number of valuations. One important result is a (model theoretic) classification
result for bounded pseudo $T$-closed fields, in particular we show that under
specific hypotheses on $T$, these fields are NTP$_2$ of finite burden.
\end{abstract}

\maketitle

\noindent \textbf{Keywords:} Model theory, valued fields,  ordered fields, NTP$_2$, PAC, PRC and P$p$C fields.
  
\noindent \textbf{Mathematics Subject Classification:} 03C98, 03C45, 	12L12, 	12J10, 12J15.    

\section{Introduction}

One of the main examples (and motivation) for the development of unstable
theories, especially simple theories, was that of pseudo algebraically closed
fields (PAC fields), and especially those that have a small Galois group ---
\emph{i.e.} those with finitely many extensions of any given degree \(d\),
usually called \emph{bounded} fields. This class of fields was introduced by Ax
in \cite{Ax} to describe the theory of finite fields. A $\PAC$ field is a field
$K$ such that any geometrically integral variety over $K$ admits a $K$-rational
point; in other words, they are existentially closed (in the language of rings)
in any regular extension.

This notion has proven to be very fruitful, whether from the arithmetic point of
view (see for example \cite{FriJar-FA}) and the model theoretic point
of view where it continues to be the source of many examples of moderate
structures, see for example \cite{Cha-OmPAC}. It was eventually reinterpreted as a special case of a wide range of
local-global principles for the existence of rational points with the emergence 
of the pseudo real closed fields ($\PRC$) of Prestel \cite{Pre-PRC}  (the
existence of smooth points in any real closure implies the existence of rational
points) and pseudo $p$-adically closed fields ($\PpC$) \cite{Gro, HarJar-PpC}
(the analogous notion for $p$-adic valuations) and, eventually, the pseudo
classically closed fields of Pop \cite{Pop-PCC} or the regularly \(T\)-closed
fields of Schmid \cite{Sch-RegT}.

Despite the great activity in model theory surrounding $\PAC$ fields, for quite
some time, the model theoretic study of these fields was mostly confined to
questions of completeness and decidability. In \cite{Mon-NTP2}, the first author
initiated the classification (in the sense of Shelah) of bounded \(\PRC\) and
\(\PpC\) fields by showing, among other things, that bounded \(\PRC\) and
\(\PpC\) fields are \(\NTP_2\) of finite burden. This was followed by further
work on imaginaries \cite{Mon-EIPRC} and \cite{MonRid-PpC} and definable groups
\cite{MonOnsSim}.

Notwithstanding an apparent similarity in techniques, $\PRC$ fields and $\PpC$
fields are studied separately in those works. The purpose of this text is to
propose a common framework for the study of these fields. This has, of course,
already been attempted before, and our approach is closely related to that of
Schmid \cite{Sch-RegT}: given a theory \(T\) of (large) fields, we say that a
field \(K\) is pseudo \(T\)-closed if every geometrically integral variety over
\(K\) with a smooth point in every model of \(T\) containing \(K\) has a point
in \(K\) (see \cref{def PTC}). This class generalizes several of the notions
that appear in the literature (see \cref{Ex PTC}).

What sets this work apart, however, is that we aim for a more model theoretic
flavor both in our definitions and in our goals. In particular, we also aim to
describe pseudo algebraically closed fields endowed with a finite number of
valuations, a family of structure that has, to the best of our knowledge escaped
scrutiny so far, despite its apparent similarities with the aforementioned
examples.

One striking phenomenon that seems to arise in the study of all these
"local-global" principles is the fact that an apparently arithmetic hypothesis,
phrased uniquely in terms of existence of rational points in varieties has
strong topological implications on the density of points for \emph{a priori}
unrelated topologies. In other words, existential closure as a topological field
comes for free from existential closure as a field. This was first observed for
\(\PRC\) fields by Prestel \cite{Pre-PRC}, and was later generalized to other
classes. However, in all of these examples, that behavior is not completely
surprising since the topologies considered can be defined in the field structure
alone. Somewhat more surprisingly, this is also true of algebraically closed
fields by the historic theorem of Robinson \cite{Rob-ACVF} which states that
algebraically closed fields are existentially closed as valued fields despite
the valuation very much not being definable. This was generalized much later to
\(\PAC\) fields by Kollar \cite{Kol-PAC} and was also noticed by Johnson
\cite{Joh-MultVal} who showed that this happens as well when adding independent
valuations to a Henselian valued field.

This stronger form of existential closure has been shown to have model theoretic
consequences \cite{GeyJar,Hon-PACV}, and plays a central (albeit somewhat
hidden) role in the first author's work. It is therefore natural, given a set
\(I\) of completions of \(T\) --- which we think of as "localizations of $T$"
and come with their topologies --- to define the notion of \(I\)-pseudo
\(T\)-closed fields (see \cref{PTCI}) which isolates this stronger notion of
existential closure. Two obvious questions then arise:
\begin{itemize}
\item Are all pseudo \(T\)-closed fields \(I\)-pseudo \(T\)-closed?
\item Are \(I\)-pseudo \(T\)-closed fields interesting from a model theoretic
perspective?
\end{itemize}
The first half of this text gives a positive answer to the first question
(under some finiteness hypothesis), showing that earlier instances of this
phenomenon were not isolated incidents and that, indeed, \(I\)-pseudo
\(T\)-closure is essentially just independence of the involved topologies on top
of pseudo \(T\)-closure --- \emph{cf.} \cref{free ec} for a precise statement.
For example, \(\PAC\) fields remain existentially closed when endowed with
independent valuations.

The second half of the text aims at giving a partial answer to the second
question by showing that, at least in the bounded perfect case, \(I\)-pseudo
\(T\)-closed display tame model theoretic behavior, in line with what we could
expect from generalized \(\PAC\) fields. The presence of topologies is an
obvious obstruction to the simplicity of these fields, but we show that they are
essentially the only obstructions and that they are essentially only as
complicated as their "localizations". For example, we show that their burden is
the sum of the burden of the "localizations" (\cref{bdn PTC}).

Paired with the positive answer to the first question, this provides a wealth of
new examples of finite burden, strong and \(\NTP_2\) fields. One of the most
natural finite burden example is the aforementioned case of bounded perfect
\(\PAC\) fields with several independent valuations. In the appendix, we also
explain how to get rid of the independence assumption.\medskip

The organization of the text is as follows. In section \ref{Preliminaries}, we
give some preliminaries on burden, forking, large fields and \(V\)-topologies.
In section \ref{PTC}, we define the class of pseudo \(T\)-closed fields for a
given theory \(T\) of (enriched) large fields and give several equivalent
definitions in terms of "local-global" principle and existential closure. We
then proceed to prove an approximation theorem for rational points of varieties
(Theorem \ref{approx var}) under some finiteness hypothesis.

In section \ref{IPTC}, we define, given a set \(I\) of completions of \(T\), the
class of \(I\)-pseudo \(T\)-closed fields, and we give several equivalent
definitions in terms of approximation properties and existential closure. We
also prove that this is an elementary class under some mild hypotheses that will
always be verified when the theories in \(I\) are theories of Henselian valued
(or real closed) fields. In section \ref{BoundedPTC}, we initiate the model
theoretic study of perfect bounded  \(I\)-pseudo \(T\)-closed fields. We show
that the theory is model complete once the Galois group is fixed, and types are
given by the isomorphism type of their algebraic closures (\cref{char elem}). We
then deduce that these fields enjoy a form of quantifier elimination "up to
density" (\emph{cf.} \cref{Density2}).

This last property can also be naturally rephrased as an "open core" property
that is typical of generic topological structure: any open definable set is
quantifier free definable (\cref{Density}). Note that our methods are, in fact,
more general and also allow proving such "open core" properties for topological
fields with additional non-topological generic structure.

In section \ref{Burden}, generalizing work of the first author, we prove that,
although the presence of topologies is an obvious obstruction to the simplicity
of these fields, it is essentially the only obstruction. For example,
3-amalgamation holds up to quantifier free obstructions (\emph{cf.}
\cref{Amalgamation}). Along with the earlier density result, this allows us to
compute burden in perfect bounded \(I\)-pseudo \(T\)-closed fields (\cref{bdn
PTC}) and characterize forking (\cref{char fork}).

We conclude with a short appendix describing how, although \(I\)-pseudo
\(T\)-closure forces all the considered topologies to be independent, following
an approach of Johnson \cite{Joh-MulACVF}, we can also describe what happens for
dependent valuations.

We thank the anonymous referee for his careful reading and insightful remarks on an earlier version of this text.

\section{Preliminaries} \label{Preliminaries}

In this section we will recall some notions (and fix some notation) that will be used throughout the paper.

\subsection{Model Theory}

Most of the structures in this paper, if not all, will be fields that we
consider in (some expansion of) the ring language $\Lrg := \{+,-,\cdot,0,1\}$.

\begin{notation} Let $\cL$ be a language and $M$ an \(\cL\)-structure. We write $\acl$ and $\dcl$ for the model theoretic algebraic and definable closure in $M$. If $a$ and $b$ are tuples in $M$ and $A \subseteq M$, then we say that $a \equiv_A b$ ($a \qfequiv_A b$) if $a$ and $b$ have the same (quantifier-free) type over $A$. We will write $\TP(A)$ for the space of $\cL$-types with parameters in $A$.
\end{notation}

\begin{definition}
If \(K\) is a field (considered as a structure in some expansion of \(\Lrg\)),
and $X$ is a $\cL(K)$-definable set, we will write $\dim(X)$ for the algebraic
dimension of $X$:
\[\dim(X):= \max\{\trdeg(a/K): a \in X(M)\text{ where }K\subsel M\}.\]
\end{definition}

\begin{definition}
Let $X$ be definable in some \(\cL\)-structure \(M\) and \(\kappa\) a cardinal.
\begin{enumerate}
    \item An \emph{inp-pattern} of depth \(\kappa\) in $X$  consists of
    definable subsets \(\phi_l(M,a_{lj})\) of \(X\), with \(l < \kappa\) and
    \(j<\omega\), such that:
    \begin{itemize}
    \item for every \(l<\kappa\), \((\phi_l(x,a_{lj}))_{j<\omega}\) is
    \(k_l\)-inconsistent, for some \(k_l\geq 1\);
    \item for every \(f : \kappa \to \omega\), \((\phi_l(x,a_{l f(l)}))_{l<\kappa}\) is consistent.
    \end{itemize}
    \item The \emph{burden} of $X$, denoted $\bdn(X)$ is greater or equal to \(\kappa\), if there exists an inp-pattern of depth \(\kappa\) in $X$ and
    \[\bdn(X) = \sup\{\kappa \mid \bdn(X)\geq \kappa\}.\]
    To keep track of finer estimates, burden can be computed in \(\mathrm{Card}^\star\), \emph{cf.} \cite[Definition~1.27]{Tou-bdn}.
    \item The burden of \(M\), or equivalently the burden of \(\Th(M)\), is the burden of the home sort (provided \(M\) has a unique dominant sort).
\end{enumerate}
\end{definition}

\begin{remark} \label{burden}
\begin{enumerate}
    \item 
By Lemma 2.2 of \cite{Che-NTP2} if $\phi_{l}(M,a_{lj})$ is an inp-pattern, then
you can always assume that the sequences $(a_{lj})_{j<\omega}$ are mutually
indiscernible over any given $A \subseteq M$, meaning that $(a_{lj})_{j<\omega}$
is indiscernible over $A\cup \{a_{l'j} \mid l' \neq l, j<\omega\}$.

\item By Lemma 7.1 of \cite{Che-NTP2} if $\phi_{l,0}(M,a_{l,0, j}) \lor \phi_{l,1}(M,a_{l,1, j})$ is an inp-pattern, then $\phi_{l,f(l)}(M,a_{l,f(l), j})$ is an inp-pattern for some $f: \lambda \rightarrow\{0,1\}$. 

\item If $X,Y$ are definable, then $\bdn(X)+ \bdn(Y) \leq \bdn(X \times Y)$.

 \item If \(X\),\(Y\) are definable, then \(\bdn(X\cup Y) = \max(\bdn(X),\bdn(Y))\).
  \item If \(f: Y \to X\) is definable and surjective, then \(\bdn(Y) \geq \bdn(X)\).
    \item If \(f: Y \to X\) is definable and finite-to-one, then \(\bdn(X) \geq \bdn(Y)\).
\end{enumerate}

\end{remark}

\begin{definition}
A theory $T$ is called \emph{strong} if there are no inp-patterns of infinite
depth in its models. It is \(\NTP_2\) if there is a (cardinal) bound on the
depth of inp-pattern in its models.
\end{definition}

The burden of a real closed field (in the ring language, or equivalently the
ordered ring language) is \(1\). In examples, we will also consider valued
fields. Given a valued field \((K,v)\), we denote \(\Val\) its valuation ring
with maximal ideal \(\m\), \(\k = \Val/\m\) its residue field, \(\Gamma =
v(K^\times)\) its valuation group and \(\RV = K/(1+\m)\) its leading terms. We
refer the reader to \cite{Tou-bdn} for a very throughout introduction to their
model theory and the proof of the following burden computations:

\begin{fact}[{\cite[Corollary~3.13 and Theorem~3.21]{Tou-bdn}}]\label{bdn VF}
Let \((K,v)\) be a valued field.
\begin{enumerate}
  \item If \(K\) eliminates quantifiers relatively to \(\RV_1\), is elementary
  equivalent to a maximally complete valued field and the \(\k^{\star}/\k^{\star n}\)
  are finite, then
  \[\bdn(K) = \max\{\bdn(\k),\bdn(\Vg)\}.\]
  \item If \(K\) eliminates quantifiers relatively to \(\bigcup_n\RV_n\), is
  elementary equivalent to a maximally complete valued field and \(K\) is
  finitely ramified --- that is, for all \(n\geq 1\), \((0,v(n))\) is finite ---
  then \[\bdn(K) = \max\{\aleph_0\cdot \bdn(\k),\bdn(\Vg)\}.\]
\end{enumerate}
These equalities hold resplendently for \(\Vg\)-expansions of \(\k\)-expansions.
\end{fact}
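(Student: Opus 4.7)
The plan is to establish matching lower and upper bounds on $\bdn(K)$. The lower bound $\max\{\bdn(\k),\bdn(\Vg)\}\leq\bdn(K)$ is immediate: both $\k=\Val/\m$ and $\Vg=K^\times/\Val^\times$ are interpretable in $K$, so any inp-pattern on either sort lifts via item~(5) of \cref{burden}. In part~(2), the additional $\aleph_0$ factor is obtained by exhibiting infinitely many mutually ``orthogonal'' copies of the residue field inside the chain of $\RV_n$-sorts; this is possible precisely because $K$ is finitely ramified, since each quotient $(1+\m^n)/(1+\m^{n+1})$ is then a $\k$-torsor and the extraction of these copies can be made definable in a uniform way.

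For the upper bound, I would pass to a maximally complete, sufficiently saturated elementary extension of $K$, and apply item~(1) of \cref{burden} to assume the rows of a given inp-pattern $(\phi_l(x,a_{lj}))$ are mutually indiscernible. The relative quantifier elimination hypothesis then rewrites each $\phi_l(x,a_{lj})$ as a quantifier-free Boolean combination of conditions involving only $\RV$-terms; mutual indiscernibility, combined with the fact that $K$-types over a maximally complete base are controlled by their projection to $\RV$, forces the whole burden of the pattern to already be realized within the $\RV$-quotient. This reduces the problem to bounding $\bdn(\RV_1)$ (in part~(1)) or $\bdn(\bigcup_n\RV_n)$ (in part~(2)).

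These remaining burdens are computed via the short exact sequence $1\to\k^\times\to\RV_n^\times\to\Vg\to 0$, using items~(3) and~(6) of \cref{burden} to assemble/reconstruct patterns across the sequence. In part~(1), finiteness of $\k/\k^{\star n}$ controls the kernel contribution and yields $\bdn(\RV_1)=\max\{\bdn(\k),\bdn(\Vg)\}$; in part~(2), finite ramification similarly handles each $\RV_n$ individually, and combining the resulting ``fresh'' residue-field data over $n\geq 1$ produces the $\aleph_0\cdot\bdn(\k)$ factor. The main obstacle is the maximal completeness step: without it, $K$-types can carry burden invisible to the $\RV$-projection through Ax-Kochen-Ersov-type pathologies, which would derail the reduction; this is precisely where one needs a \emph{spherically complete} representative in the elementary class. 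The resplendence assertion for $\Vg$-expansions of $\k$-expansions then comes for free, since the whole argument factors through the quotient sorts and never interacts with the ring-theoretic structure of $K$ itself.
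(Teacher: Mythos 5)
The paper does not prove this statement: it is imported as a \emph{Fact} from Touchard's work (\cite[Corollary~3.13 and Theorem~3.21]{Tou-bdn}), and the only "proof" the paper offers is the remark immediately following it, explaining that Touchard's published hypotheses (unramified mixed characteristic) relax to finite ramification because the higher residue rings $\R_n = \Val/n\m$ remain interpretable in $\k$. So there is no in-paper argument for you to match.

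That said, your sketch is a reasonable reconstruction of the strategy behind the cited result. The architecture --- reduce burden from the field to the RV sorts via relative quantifier elimination and maximal completeness (so that types over a spherically complete base are controlled by their RV-data), then run the short exact sequence $1\to\k^\times\to\RV^\times\to\Vg\to 0$ and use finiteness of $\k/\k^{\star n}$ to get $\max$ rather than sum --- is indeed the one used in \cite{Tou-bdn}. A few places deserve more care than your sketch gives them. First, your explanation of the $\aleph_0$ lower bound in part~(2) is slightly off: the quotients $(1+\m^n)/(1+\m^{n+1})$ are $\k$-torsors in \emph{any} valued field, including equicharacteristic zero where no $\aleph_0$ appears; what makes them contribute independently in the mixed-characteristic finitely-ramified case is that $1+\m$ is no longer $p$-divisible, so the successive $\RV_n$ sorts carry genuinely new information not already subsumed by the RV$_1$-relative quantifier elimination. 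Second, the resplendence statement does not literally "come for free because the argument never touches the ring structure of $K$" --- the reduction step manifestly uses the ring structure --- but rather because the reduction to RV is unaffected by $\k$- and $\Vg$-expansions and the final computation happens in those sorts; this is a checkable claim, not a vacuous one. Finally, you should be explicit that the lower bound in both parts uses item~(5) of the burden facts applied to the definable surjections $\Val\to\k$ and $K^\times\to\Vg$. None of these are fatal gaps, but they are precisely the points where a full proof requires the machinery developed in \cite{Tou-bdn}.
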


In \cite{Tou-bdn}, the results are stated under stronger assumptions, but as
noted there, they hold at this level of generality ---  up to the fact that
Touchard considers unramified mixed characteristic fields (and not finitely
ramified ones), but the proof there also applies since the higher residue rings
\(\R_n = \Val/n\m\) are also interpretable in \(\k\) in that case.

\begin{definition}\label{forking}
Let \(M\) be a (sufficiently saturated) \(\cL\)-structure and \(A\leq M\).
\begin{enumerate}
 \item We say that the formula $\phi(x,a)$ \emph{divides over $A$} if there
 exists an indiscernible sequence $(a_j)_{j \in \omega}$ over $A$ such that
 $a_0= a$ and $\{\phi(x,a_j): j \in \omega\}$ is inconsistent.
\item We say that the formula $\phi(x,a)$ \emph{forks over $A$} if there is an
integer $m$ and formulas $\psi_j(x,a_j)$ for $j <m$ such that $\phi(x,a) \vdash
\bigvee_{j <m}\psi_j(x,a_j)$ and $\psi_j(x,a_j)$ divides over $A$ for every
$j<m$.   
\item A partial type $p$ \emph{forks} (respectively \emph{divides}) \emph{over
$A$} if there is a formula in $p$ which forks (respectively divides) over $A$.
\item We say that $A$ is \emph{an extension base} if for all tuples $a$ in $M$,
$tp(a /A)$ does not fork over $A$.

\end{enumerate}
By Theorem 1.2 of \cite{CheKap}, in an NTP$_2$ theory, forking equals dividing
over extension bases.
\end{definition}

\subsection{Large fields}

 In this text, by a variety \(V\) over a field \(K\), we mean a separated
 integral scheme of finite type over \(K\). We denote by \(V(K)\) the set of its
 \(K\)-rational points. We also denote by $V_{sm}(K)$ the set of smooth
 $K$-rational points.
 
\begin{remark}[\emph{cf.}
{\cite[\href{https://stacks.math.columbia.edu/tag/01V7}{Tag
01V7}]{stacks-project}}]\label{smoothpoint} A point \(x\in V(K)\) is smooth if
there exists an affine neighborhood \(U\subseteq V\) of \(x\) and an étale
morphism \(U \to \Aa^d\) --- that is \(U\) is isomorphic to \(\Spec(S)\) where
\(S = K[x_1,\ldots,x_{c+d}]/(f_1,\ldots,f_c)\) and the matrix \((\frac{d
f_i}{d x_j})_{i\leq c,j\leq c}\) is invertible in \(S\).
\end{remark}

\begin{definition}
A field $K$ is \emph{large} if for every irreducible variety $V$, if
$\sm{V}(K)\not = \emptyset$, then $V(K)$ is Zariski dense in \(V\) ---
equivalently, by compatness, \(K(V)\) can be \(\Lrg(K)\)-embedded in some
\(K^\star \succcurlyeq K\).
\end{definition}

We know that algebraically closed fields,  real closed fields, $p$-adically
closed fields and, more generally, Henselian fields are large; as well as pseudo
algebraically closed fields, pseudo real closed fields and pseudo \(p\)-adically
closed fields, and further generalizations of these notions.\smallskip

Finally, given two extensions \(M\) and \(K\) of a field \(F\), we write $M
\lindep_{F} K$ if the field $M$ is linearly disjoint from $K$ over $F$ and \(L
\aindep_{F} K\) if  $M$ and $K$ are algebraically independent over $F$. We also
denote by \(\alg{K}\) the algebraic closure of \(K\) and by \(\sep{K}\) its
separable closure. Recall that a variety \(V\) over \(K\) is geometrically
integral (equivalently absolutely irreducible defined over \(K\)) if the
extension \(K \leq K(V)\) is regular --- that is \(K(V)\lindep_K \alg{K}\).

\subsection{V-topological fields}

\begin{definition}
Let \((K,\tau)\) be a (non-discrete) topological field.
\begin{enumerate}
\item A subset \(S\subseteq K\) is said to be \emph{bounded} if for every  for every open neighbourhood of the identity 
\(U\subset K\), there exists \(x\in K^\times\) such that \(xS = \{x\cdot y \mid
y \in S\} \subseteq U\) --- equivalently, \(\{x\in K \mid xS \subseteq U\}\) is
a (non-trivial) neighborhood of zero.
\item A subset \(S\subseteq K\) is said to be \emph{bounded away from \(0\)} if there
exists a neighborhood \(U\) of zero such that \(S\cap U = \emptyset\).
\item The topology \(\tau\) is said to be a \emph{\(V\)-topology} if, whenever \(S^{-1}
= \{x^{-1}\mid x\in S\}\) is bounded away from \(0\), \(S\) is bounded.
\item The topology \(\tau\) is said to be \emph{Henselian} if every integer \(n \geq
1\) there is a neighborhood \(U\) of zero such that every polynomial in
\(X^{n+1} +X^n +U[X]^{n-1}\) has a zero in \(K\) --- where \(U[X]^{n-1}\) denotes
the set of polynomials of degree at most \(n-1\) with coefficients in \(U\).
\end{enumerate}
\end{definition}

By a theorem of Dürbaum-Kowalsky, and independently Fleischer, a topology is a
\(V\)-topology if and only if it is induced by either an order or a valuation. We
refer the reader to \cite{PreZie} for further detail on \(V\)-topologies.

In this text, the most useful consequence of Henselianity (and an equivalent
characterization) is the inverse function theorem:

\begin{fact}[{\cite[Proposition~2.8]{HalHasJah}}]\label{InvFuncThe}
Let \((K,\tau)\) be a Henselian \(V\)-topolo\-gical field and \(f : V\to W\) be an
étale morphism of varieties. Then any \(x \in V(K)\) admits a neighborhood \(U\)
such that \(\restr{f}{U}\) is a homeomorphism onto its open image.
\end{fact}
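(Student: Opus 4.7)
The plan is to reduce the statement to a multivariable implicit function theorem for Henselian $V$-topological fields, which can in turn be bootstrapped from the one-variable Hensel property built into the definition of Henselianity. The question being local around $x$, I would invoke the local structure theorem for étale morphisms to replace a neighborhood of $x$ by a standard étale neighborhood: after possibly shrinking, $V$ becomes $\Spec\bigl(\cO_W[Z_1,\ldots,Z_n]/(h_1,\ldots,h_n)\bigr)$, with $f$ the projection to $W$, and the Jacobian $(\partial h_i/\partial Z_j)_{i,j\leq n}$ a unit at $x = (y_0,z_0)$. Translating so that $(y_0,z_0) = (0,0)$ and performing an invertible linear change of the $Z_i$, we may assume that each $h_i$ has the form $Z_i + P_i(Y,Z)$, where every monomial of $P_i$ either lies in the ideal $(Y)$ or has degree at least $2$ in $Z$.

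At this point the claim reduces to the following implicit function statement: for every neighborhood $\Omega$ of $0$ in $K^n$ there is a neighborhood $\Omega'$ of $0$ in $K^d$ such that, for every $y \in \Omega'$, the system $h_i(y,Z) = 0$ has a unique solution $z \in \Omega$, and the induced map $g\colon y \mapsto z$ is continuous. For $n = 1$, the polynomial $Z + P_1(y,Z)$ is, for $y$ small, a perturbation of a polynomial with a simple root at $0$ falling within the Henselian standard form of the definition, and hence has a unique small zero. The general $n$ would follow by induction, solving one variable at a time; the care required is that at each step the neighborhoods must be chosen small enough for the next polynomial to still satisfy the Henselian standard-form hypothesis, which amounts to tracking how $V$-topological boundedness composes under substitution.

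Once this implicit function statement is in hand, the conclusion follows directly. The graph of $g$ in $\Omega' \times \Omega$ is the desired neighborhood $U$ of $x$ in $V(K)$, the restriction $f|_U$ is a continuous bijection onto the open set $\Omega'$, and its inverse $g$ is continuous by a shrinking argument: for any smaller neighborhood $\Omega_0 \subseteq \Omega$ of $z_0$, the corresponding $\Omega_0'$ provided by the implicit function theorem is a neighborhood of $y_0$ on which $g$ takes values in $\Omega_0$. The main obstacle will be the inductive step of the implicit function argument --- ensuring that Henselianity applies uniformly at each stage as one eliminates variables, in particular that the partial solutions found remain in a small enough region for the standard form to persist --- but this is essentially the standard $V$-topological enhancement of the proof of the classical Euclidean implicit function theorem.
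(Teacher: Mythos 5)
The paper does not prove this statement: it is stated as a Fact and delegated to \cite[Proposition~2.8]{HalHasJah}, so there is no in-house proof to compare against. Evaluating your argument on its own merits, the overall plan --- reduce to a local model of the étale morphism, translate into an implicit function theorem, and derive that from the one-variable Henselian property of the $V$-topology --- is the right one and is essentially how this is done in the literature (\emph{cf.} \cite{PreZie} and \cite{HalHasJah}). The one-variable case you describe is sound modulo the (genuinely tedious but standard) manipulations needed to bring $Z+P_1(y,Z)$ into the normal form $X^{n+1}+X^n+U[X]^{\leq n-1}$ used in the definition, and your uniqueness argument can be completed by factoring $h(z_1)-h(z_2)=(z_1-z_2)(1+Q)$ with $Q$ small.

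There is, however, a real gap in the multivariable step. ``Solving one variable at a time'' does not stay inside the polynomial category: once you solve $h_n(y,z_{<n},Z_n)=0$ via Hensel to get $z_n=g_n(y,z_{<n})$, the function $g_n$ is merely continuous, not polynomial, so the remaining system $h_i(y,z_{<n},g_n(y,z_{<n}))=0$ is no longer of the form to which the Henselian axiom applies, and the induction as stated cannot be run. You need a different reduction to dimension one. The cleanest fix is to use the stronger local structure theorem for étale morphisms: étale maps are Zariski-locally \emph{standard étale}, i.e.\ of the form $\Spec(A[T]_g/(h))\to\Spec A$ with $h$ monic in a single variable $T$ and $h'$ invertible, which makes the one-variable Hensel property apply directly. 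Alternatively, one can invoke the Prestel--Ziegler result that a $t$-Henselian $V$-topological field is locally equivalent (in the relevant topological language) to a Henselian valued or real closed field, transfer the finite-degree multivariable implicit function statement from there, and then recover the topological conclusion; but as written your inductive elimination of variables does not go through.
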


\begin{definition}
A \(V\)-topology on a field \(K\) is said to be definable if it admits a uniformly definable basis --- equivalently if there exists one definable bounded open set.
\end{definition}

\begin{remark}\label{bounded cover}
In fact, by (the proof of)
\cite[Lemma~B.2]{EngPre}, if \(\tau\) is a definable \(V\)-topology on \(K\),
then there exists an open bounded definable neighborhood \(U\) of \(0\) such that \(K
= U\cup(U\sminus \{0\})^{-1}\).
\end{remark}

For more details on definable \(V\)-topologies,
we refer the reader to \cite[Section~3]{HalHasJah}.

\section{The class of pseudo \texorpdfstring{\(T\)}{T}-closed fields}\label{PTC}

Let \(\cL\) enrich the language of rings, \(T\) be an \(\cL\)-theory of large fields and \(K\) be an \(\cL\)-structure that is a field --- we will refer to those as \(\cL\)-fields from now on. We write \(T_K\) for the theory of models of \(T\) containing \(K\).
\begin{definition}
\begin{itemize}
\item We say that a field extension \(K\leq F\) is \emph{totally \(T\)} if, for
every \(M\models T_{K}\), \(F\) can be \(\Lrg(K)\)-embedded in some
\(M^\star\supsel M\).

\item We say that a variety \(V\) over \(K\) is \emph{totally \(T\)} if, for every \(M\models T_{K}\), \(\sm{V}(M)\neq \emptyset\).

\end{itemize}
\end{definition}

Observe that if \(V\) is a geometrically irreducible variety over \(K\), then \(\sm{V}\) is Zariski open in \(V\) (\cite[\href{https://stacks.math.columbia.edu/tag/056V}{Tag 056V}]{stacks-project}). In particular \(\sm{V}(K(V))\neq\emptyset\). It follows that, since models of \(T\) are large, \(V\) is totally \(T\) if and only if \(K\leq K(V)\) is totally \(T\).

\begin{rem}
Observe that if $T$ is $\RCF$ in $\Lrg$, we have that $K \leq F$ is totally
$T$  if and only if it is a totally real extension, meaning that every order on
$K$ extends to some order on $F$. Similarly, if $T$ is $\pCF$ in
$\Lrg$, then $K \leq F$ is totally $T$ if every $p$-adic valuation on $K$
extends to some $p$-adic valuation on $F$.
\end{rem}

\begin{proposition}\label{equiv PTC}
The following are equivalent:
\begin{enumerate}[(i)]
\item for every geometrically integral (affine) totally \(T\) variety \(V\) over \(K\), we have \(V(K) \neq \emptyset\);
\item for every geometrically integral (affine) totally \(T\) variety \(V\) over \(K\), \(V(K)\) is Zariski dense in \(V\);
\item Any regular totally \(T\) extension \(K\leq F\) is \(\Lrg\)-existentially closed.
\end{enumerate}
\end{proposition}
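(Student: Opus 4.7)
The plan is to prove the cycle (i) \(\Rightarrow\) (ii) \(\Rightarrow\) (iii) \(\Rightarrow\) (i), using three elementary ingredients: models of \(T\) are large; a geometrically integral variety \(V\) over \(K\) is totally \(T\) if and only if \(K \leq K(V)\) is a totally \(T\) extension (as noted just before the statement); and the generic point of a geometrically integral variety over any field is automatically smooth. For (i) \(\Rightarrow\) (ii), consider a totally \(T\) geometrically integral variety \(V\) and a non-empty affine open \(U \subseteq V\). For any \(M \models T_K\), largeness applied to \(V\) (which has a smooth \(M\)-point) forces \(V(M)\) to be Zariski dense in \(V\); in particular \(V(M)\) meets the open \(\sm{V} \cap U\), so \(U\) admits a smooth \(M\)-point and is itself totally \(T\). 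Applying (i) to \(U\) produces a \(K\)-point in \(U\), so \(V(K)\) is Zariski dense in \(V\).

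For (ii) \(\Rightarrow\) (iii), let \(K \leq F\) be regular and totally \(T\), and consider a quantifier-free formula \(\psi(\bar{y})\) over \(K\) with a witness \(\bar{b} \in F^n\). Set \(V \subseteq \Aa^n_K\) to be the Zariski closure of \(\bar{b}\); since \(K(\bar{b}) \subseteq F\) is a regular extension of \(K\), \(V\) is geometrically integral. After splitting \(\psi\) into equalities (which vanish on all of \(V\) by construction) and inequalities, the locus \(W \subseteq V\) where the inequalities hold is Zariski dense open and contains \(\bar{b}\). As the generic point of a geometrically integral variety, \(\bar{b}\) lies in \(\sm{V}(F)\). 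For each \(M \models T_K\), embedding \(F\) into some \(M^\star \supsel M\) --- possible because \(K \leq F\) is totally \(T\) --- yields \(\bar{b} \in \sm{V}(M^\star)\), and the existential nature of the statement "\(\sm{V}\) has a point" gives \(\sm{V}(M) \neq \emptyset\) by elementarity. Hence \(V\) is totally \(T\), and by (ii), \(V(K)\) meets the dense open \(W\), producing a \(K\)-witness for \(\psi\).

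For (iii) \(\Rightarrow\) (i), given a geometrically integral totally \(T\) affine variety \(V\) over \(K\), the function field extension \(K \leq K(V)\) is regular and totally \(T\), so by (iii) it is \(\Lrg\)-existentially closed. The identity map \(\Spec K(V) \to V\) provides a \(K(V)\)-point of \(V\), whose existence is expressed by an existential \(\Lrg(K)\)-formula --- namely the solvability of the defining equations of \(V\) --- and therefore also has a solution in \(K\). Thus \(V(K) \neq \emptyset\).

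The main subtle point is (ii) \(\Rightarrow\) (iii): three ingredients --- genericity (to locate \(\bar{b}\) in the smooth locus), the existential nature of smoothness (so that elementary extensions preserve the existence of smooth points), and the totally \(T\) condition on \(K \leq F\) (which supplies the elementary extension \(M \leq M^\star\) containing \(F\)) --- must combine to propagate the smooth \(F\)-point \(\bar{b}\) down to a smooth point in every model of \(T_K\), after which (ii) delivers the required \(K\)-rational witness for \(\psi\).
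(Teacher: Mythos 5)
Your proof is correct, and it traverses the cycle in the opposite direction to the paper's. The paper proves (ii) \(\Rightarrow\) (i) (trivially), then (i) \(\Rightarrow\) (iii) and (iii) \(\Rightarrow\) (ii); you prove (i) \(\Rightarrow\) (ii) \(\Rightarrow\) (iii) \(\Rightarrow\) (i). The two "locus of a witness" arguments differ in an instructive way: the paper goes from the weaker hypothesis (i), so it must first reduce the quantifier-free formula to a pure conjunction of equations by introducing auxiliary existential variables (\emph{i.e.}\ Rabinowitsch's trick for the inequalities), whereas you invoke the stronger density statement (ii), which lets you keep the inequalities around and simply intersect the rational points of the locus with the dense open \(W\) they cut out. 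Your (i) \(\Rightarrow\) (ii) is also a nice short argument not in the paper, relying on the observation that each non-empty affine open \(U\subseteq V\) inherits the totally-\(T\) property from \(V\) via largeness of the models of \(T_K\). Your (iii) \(\Rightarrow\) (i) is the "rational point" fragment of what the paper establishes in (iii) \(\Rightarrow\) (ii); you correctly note that the single point suffices when aiming only at (i). One tiny elision worth flagging: in (ii) \(\Rightarrow\) (iii), "splitting \(\psi\) into equalities and inequalities" tacitly assumes \(\psi\) is a conjunction of literals --- for an arbitrary quantifier-free formula one should first pass to disjunctive normal form and work with the disjunct satisfied by \(\bar b\), whose \(K\)-witness is then a fortiori a witness for \(\psi\). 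With that one-line clarification, the argument is complete.
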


\begin{proof} We obviously have that (ii) implies (i).
\begin{itemize}
\item[(i)\imp(iii)] Let \(F\) be as in (iii) and let \(\phi(x)\) be some
quantifier free \(\Lrg\)-formula such that \(F \models \exists x \phi(x)\).
Adding existential quantifiers, we may assume that \(\phi\) is of the form
\(\bigwedge_{i < n} P_i(x) = 0\) where \(P_i\in K[x]\). Let \(a\models \phi\) in
\(F\) and \(V\) be the locus of \(a\) over \(K\). Since \(K\leq K(a) \leq F\) is
regular, \(V\) is geometrically integral and, by (\cite[\href{https://stacks.math.columbia.edu/tag/056V}{Tag 056V}]{stacks-project}), \(\sm{V}(F) \neq \emptyset\). Moreover, by hypothesis on \(F\), for
every \(M\models T_K\), there exists \(M^\star\supsel M\) such that \(F\leq
M^\star\). So \(\sm{V}(M^\star) \neq \emptyset\) and hence \(\sm{V}(M) \neq \emptyset\). By (i), we find \(c\in V(K)\). In particular, \(K\models
\bigwedge_i P_i(c) = 0\).

\item[(iii)\imp(ii)] Let \(V\) be as in (ii). Any \(M\models T_K\) is large and
hence, if \(\sm{V}(M)\neq \emptyset\), then some \(M^\star \supsel M\) contains
a \(K\)-generic point \(a\) --- that is \(a\) is not in any proper sub-variety
of \(V\) over \(K\). In other terms, \(F := K(V)\) is \(\Lrg(K)\)-embeddable in
\(M^\star\). By (iii), \(F\) is \(\Lrg(K)\)-embeddable in \(K^\star\supsel K\),
equivalently, \(V(K)\) is Zariski dense in \(V\).\qedhere
\end{itemize}
\end{proof}

\begin{definition}\label{def PTC}
The \(\cL\)-field \(K\) is said to be \emph{pseudo \(T\)-closed} (\(\PTC\)) if the statements of \cref{equiv PTC} hold.
\end{definition}

\begin{remark}
\begin{enumerate}
    \item It follows from \cref{equiv PTC}.(ii), that \(\PTC\) fields are large.
    \item The class of \(\PTC\) fields is inductive in \(\cL\), \emph{cf.} \cref{SPTC elem}.
\end{enumerate}
\end{remark}

\begin{example} \label{Ex PTC}
For different choices of \(T\), the following are examples of \(\PTC\) fields :
\begin{itemize}
    \item If \(T = \ACF\) in \(\Lrg\), then \(\PTC\) fields are exactly \(\PAC\) fields.
    \item If \(T = \RCF\)  in \(\Lrg\), then \(\PTC\) fields are exactly \(\PRC\) fields.
    \item If \(T = \RCF_<\) in \(\Lrg\cup\{<\}\), then \(\PTC\) fields are exactly
    \(1-\PRC\) fields, \emph{i.e.} \(\PRC\) fields whose only order is \(<\).
    The equivalence of those two classes is not obvious from the definition and
    relies crucially on \cref{PTC dense} to show the only order on a \(\PTC\)
    field is \(<\).
    \item Let \(\cL = \Lrg\cup\{<_1,\ldots,<_n\}\) and \(T  = \bigvee_i \RCF_{<_i}\), whose models are the \(\cL\)-structures \(M\) with \(\restr{M}{\cL_i}\models \RCF_{<_i}\), for some \(i\). Then \(\PTC\) fields where the \(<_i\) define distinct orders are exactly \(n-\PRC\) fields. This equivalence relies again on \cref{PTC dense}.
    \item If \(T = \pCF\) in \(\Lrg\), then \(\PTC\)-fields are exactly \(\PpC\) fields.
    \item For \(i < n\), let \(\cL_i\) be copies of Macintyre's language sharing the ring language and \(\pCF_{v_i}\) the \(\cL_i\)-theory of \(p\)-adically closed fields. Then \(\PTC\) fields whose \(\cL_i\)-structure induces distinct \(p\)-adic valuations are exactly \(n-\PpC\) fields. Once again the equivalence relies on \cref{PTC dense}.
    \item Fehm's pseudo \(\mathcal{S}\)-closed fields \cite{Feh-PCC} also fit in this framework.    
\end{itemize}
\end{example}

\begin{lemma}\label{better PTC}
Let \(F, K\) be \(\cL\)-fields, \(F\leq M\) be regular and totally \(T\), and \(f : F \to K\) be an \(\cL\)-embedding. Then \(K\leq K\tensor_F M\) is regular and totally \(T\).
\end{lemma}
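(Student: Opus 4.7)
The plan is to prove the regularity part by a standard field-theoretic argument, and the ``totally \(T\)'' part by realizing a \(K\)-generic type of a base-changed variety using compactness together with the largeness of models of \(T\).

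For regularity: since \(M/F\) is regular, the tensor product \(K\tensor_F M\) is a domain for any field extension \(F\leq K\), and the induced extension of fraction fields is again regular. This is a standard algebraic fact which I would simply cite.

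For totally \(T\): let \(N'\models T_K\). I want an \(\Lrg(K)\)-embedding of (the fraction field of) \(K\tensor_F M\) into some \(N'^\star\supsel N'\). By a compactness argument on the \(\Lrg(K)\)-atomic diagram of \(K\tensor_F M\), I reduce to the case where \(M/F\) is finitely generated (any finite subdiagram involves only finitely many elements, which lie in \(K\tensor_F M_0\) for some f.g.\ subextension \(M_0/F\), still regular). Writing \(M=F(V)\) for a geometrically integral \(F\)-variety \(V\), the ring \(K\tensor_F M\) has fraction field \(K(V_K)\) where \(V_K:=V\times_F K\) is geometrically integral of dimension \(\dim V\) over \(K\). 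An \(\Lrg(K)\)-embedding of \(K(V_K)\) over \(K\) into some \(N'^\star\supsel N'\) corresponds precisely to a \(K\)-generic point of \(V_K\) in \(N'^\star\), i.e., a realization of the partial \(\Lrg(K)\)-type \(\pi(\bar x):=\{P(\bar x)=0: P\in I(V_K)\}\cup\{Q(\bar x)\neq0: Q\in K[\bar x]\sminus I(V_K)\}\). By compactness and saturation in a large enough elementary extension, it suffices to realize each finite subpart of \(\pi\) in some \(N'^\star_0\supsel N'\); such a finite subpart amounts to finding a point of a nonempty dense open subset \(U\subseteq V_K\) in \(N'^\star_0\).

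Here is the crucial step. Since \(F\leq K\leq N'\) as \(\cL\)-structures via \(f\), \(N'\) is itself a model of \(T_F\). Applying the hypothesis that \(F\leq M\) is totally \(T\), I obtain some \(N'^\star_0\supsel N'\) and an \(\Lrg(F)\)-embedding \(g:M\to N'^\star_0\). The image \(g(\bar m)\) of a tuple \(\bar m\) generating \(M\) over \(F\) is then a rational point of \(V\) in \(N'^\star_0\), and it is smooth: \(V\) is geometrically integral hence generically smooth, and \(g(\bar m)\) maps to the scheme-theoretic generic point of \(V\) since \(g\) is injective. By largeness of \(N'^\star_0\models T\), \(V(N'^\star_0)\) is Zariski-dense in \(V_{N'^\star_0}\); since \(U_{N'^\star_0}\) is a nonempty open of \(V_{N'^\star_0}\), it contains a point of \(V(N'^\star_0)\), giving the required realization.

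The main obstacle is bridging the gap between totally \(T\) of \(F\leq M\), which only directly produces \(F\)-generic points of \(V\), and the need for \(K\)-generic points of \(V_K\) (which must avoid arbitrary \(K\)-defined proper closed subvarieties). This gap is closed by the largeness axiom in \(T\): the smooth \(F\)-generic point of \(V\) produced by totally \(T\) triggers Zariski-density of all \(N'^\star_0\)-points of \(V\), which is strong enough to populate any nonempty \(K\)-defined dense open of \(V_K\).
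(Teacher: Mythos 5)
Your proof is correct and follows essentially the same route as the paper's: reduce to $M/F$ finitely generated so $M = F(V)$; observe any $N' \models T_K$ is also a model of $T_F$; use the totally-$T$ hypothesis to produce a smooth $N'$-point of $V$; and invoke largeness to get Zariski density and hence, by compactness, a $K$-generic point of $V_K$ in an elementary extension. The only stylistic difference is that you re-derive smoothness of the embedded generic point from injectivity of $g$ together with geometric reducedness of $V$, whereas the paper instead invokes its standing observation that, for $V$ irreducible over $F$, the extension $F \leq F(V)$ is totally $T$ if and only if $V$ is totally $T$ as a variety.
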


\begin{proof}
We may assume that \(M\) is finite type over \(F\). Then \(M = F(V)\), where
\(V\) is a geometrically integral totally \(T\) variety over \(F\). Let
\(E\models T_K\). Then \(E\models T_F\) and hence \(\sm{V}(E)\neq\emptyset\).
Since \(E\) is large, we find an \(\Lrg(E)\)-embedding \(g : E\tensor_F M \isom
E(V) \to E^\star\supsel E\) --- in particular, we have an \(\Lrg(K)\)-embedding
\(K(V) \isom K\tensor_F M \to E^\star\).
\end{proof}

\begin{convention}
Replacing \(T\) by the theory whose models are either models of \(T\) or algebraically closed does not change the notion of pseudo \(T\)-closed fields: algebraically closed fields always have rational smooth points in any variety. So, from now on, we will assume that any algebraically closed field is a model of \(T\).
\end{convention}

We will now study the topological properties of \(\PTC\) fields. We fix  \(K\)  a \(\PTC\) field.

\begin{notation}\ 
\begin{itemize}
\item For every \(M\models T_K\), let \(K_M:= M\cap\sep{K}\).
\item For every \(V\)-topology \(\tau\) on \(K\), let \(C_\tau := \{K_M\) that can be endowed with a Henselian \(V\)-topology that induces \(\tau\) on \(K\}\).

\item We say that \(F \in C_\tau\) is minimal if any \(\Lrg(K)\)-embedding \(E \to F\), with \(E\in C_\tau\), is surjective.

\item For every \(V\)-topology \(\tau\) on \(K\), let \(K_\tau = \compl{(K,\tau)}\cap\sep{K}\), where  \( \compl{(K,\tau)}\) is the completion of $(K, \tau)$.
\end{itemize}
\end{notation}

Note that, by convention, \(\alg{K}\models T\) and hence \(\sep{K}\in C_\tau\),
for every \(\tau\). Also, by uniqueness of the Henselian \(V\)-topology on a
non-separably closed field --- cf. \cite[Theorem 7.9]{PreZie} --- if \(\tau\)
and \(\tau'\) are distinct \(V\)-topologies on \(K\), \(C_\tau\cap C_{\tau'} =
\{\sep{K}\}\).

From now on we will assume:

\begin{hyp}\label{H1}
Until the end of \cref{PTC}, we assume :
\begin{enumerate}[label=\textbf{(H)}]
\item For every \(M\models T_K\), \(K_M\) can be endowed with at least one
Henselian \(V\)-topology.
\end{enumerate}
\end{hyp}

\begin{lemma}
For every \(\tau\), \(C_\tau\) admits minimal elements.
\end{lemma}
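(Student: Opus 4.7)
The plan is to apply Zorn's lemma to $C_\tau$ ordered by embeddability over $K$. Given a totally ordered chain $\mathcal{C} = (F_i)_{i \in I}$ in this order, every $F_i$ is separable algebraic over $K$ and thus embeds into $\sep{K}$; by a transfinite recursion along the chain, I would choose these embeddings coherently, so $\mathcal{C}$ is realized as a family of nested subfields of $\sep{K}$. The candidate lower bound is then $F := \bigcap_{i} F_i$. It remains to check $F \in C_\tau$, i.e., (a) $F = K_M$ for some $M \models T_K$, and (b) $F$ admits a Henselian $V$-topology extending $\tau$.

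For (a), I would apply compactness to the $\cL$-theory
\[
\Sigma := T \cup \mathrm{diag}_{\Lrg}(F) \cup \{\, \forall x\ p(x) \neq 0 : p \in F[X] \text{ monic separable irreducible of degree} > 1 \,\}.
\]
A finite fragment of $\Sigma$ involves finitely many such polynomials, each with finitely many roots in $\sep{K}\setminus F = \sep{K} \setminus \bigcap_i F_i$; each root $r$ is excluded from some $F_{i(r)}$, so picking $i^\star$ far enough along the chain ensures $F_{i^\star}\subseteq F_{i(r)}$ for every such $r$, whence $F_{i^\star}$ contains none of them. The associated $M_{i^\star}$ (for which $M_{i^\star}\cap\sep{K} = F_{i^\star}$) then contains $F$ and no root of these polynomials, so models the fragment. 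A model $M$ of $\Sigma$ satisfies $M\cap\sep{K} = F$, since any element of $M\cap\sep{K}$ has separable minimal polynomial over $F$ of degree forced to be $1$.

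For (b), let $\tau_i$ denote the Henselian $V$-topology on $F_i$ extending $\tau$, which exists by $F_i \in C_\tau$. The key input is the uniqueness of Henselian $V$-topology on a non-separably closed field (\cite[Theorem~7.9]{PreZie}). For $F_i \subseteq F_j$ in the chain, the unique extension $\tau_i^\star$ of $\tau_i$ to $\alg{F_i} = \alg{K}$ restricts to a Henselian $V$-topology on $F_j$ (since $F_j / F_i$ is algebraic and $F_i$ is Henselian) extending $\tau$; by uniqueness on $F_j$ this equals $\tau_j$, so $\tau_i = \tau_j|_{F_i}$ and moreover $\tau_i^\star = \tau_j^\star$. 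Thus the $\tau_i$ are all restrictions of a single Henselian $V$-topology $\tau^\star$ on $\alg{K}$, and $\tau_F := \tau^\star|_F$ is a $V$-topology on $F$ extending $\tau$. Henselianity then follows from Hensel's lemma: given $f \in F[X]$ with a simple approximate root $a \in F$, the unique close root in $\alg{K}$ with respect to $\tau^\star$ lies in each Henselian $F_i$, hence in $F$.

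The main obstacle is (b): realizing that the uniqueness of Henselian $V$-topology on non-separably closed fields automatically forces the $\tau_i$ to be compatible along the chain. Without this observation, the $K$-embeddings witnessing the chain need not be topology-preserving a priori, and the intersection $F$ would have no canonical topology to inherit.
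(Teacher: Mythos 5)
Your proposal is correct in its core and shares the paper's skeleton: reduce to lower bounds for decreasing chains via Zorn, then produce the bound by a compactness argument forbidding roots of polynomials. The paper, however, does \emph{not} pre-realize the chain as a nested family of subfields of $\sep{K}$; it works directly with arbitrary $M_i\models T_K$ witnessing $K_i=M_i\cap\sep{K}$, and the compactness is run over $K$ (the theory $T_K\cup\{\forall x\,P(x)\ne 0 : P\in K[x],\ \exists i\ M_i\models\forall x\,P(x)\ne 0\}$), after which the lower bound is read off as $K_M$ for the resulting $M$, embedding into every $K_i$ via the standard ``roots of $K$-polynomials'' criterion for embeddability of separable extensions. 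Your variant — coherently realizing the chain inside $\sep{K}$ and taking $F=\bigcap F_i$, with the compactness run over $\mathrm{diag}(F)$ — is a legitimate reformulation, but the ``choose embeddings coherently by transfinite recursion'' step is not free: at limit stages you need compactness of the (profinite) space $\mathrm{Emb}_K(F_\lambda,\sep{K})$ to find an embedding landing in the intersection of the previous fields, so this should be said. The genuinely novel content of your write-up is part (b), checking that the candidate bound actually admits a Henselian $V$-topology inducing $\tau$ on $K$; the paper's proof stops at the embedding statement and does not address this point, so your (b) is filling a step the paper leaves implicit rather than overcoming an obstacle the paper also faces. Two technical caveats there: the ``unique extension of $\tau_i$ to $\alg{K}$'' needs Henselianity of $(F_i,\tau_i)$ to make sense (which you have), and the argument as written applies to valuation topologies only — when $\tau_i$ is an order topology, $\alg{K}$ admits no order, so one should handle that case separately (it is degenerate: a real closed $F_i\leq\sep{K}$ with $\alg{K}=F_i(\sqrt{-1})$ admits no real closed proper subextension over $K$ containing $K$ and lying in $C_\tau$ other than itself, so such a chain stabilizes immediately).
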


\begin{proof}
By Zorn's lemma, it suffices to show that any decreasing chain (of non separably
closed fields) \((K_i)_{i\in I}\) in \(C_\tau\) admits a lower bound. Let
\(M_i\models T_K\) such that \(K_i = M_i\cap \sep{K}\). The theory \(T_K
\cup\{\forall x\ P(x) \neq 0 : P\in K[x]\) such that, for some \(i\in I\), \(M_i
\models\forall x\ P(x) \neq 0\}\) is consistent since any finite subset is
realized in an \(M_i\) and let \(M\) be a model. For all \(i\in I\) and \(P\in
K[x]\), if \(K_M \models \exists x\ P(x) = 0\), then we also have \(M_i \models
\exists x\ P(x) = 0\), so \(K_M\) can be embedded in \(M_i\) over \(K\),
\emph{i.e.} it is a lower bound. Moreover, by Hypothesis \textbf{(H)}, \(K_M\)
admits a Henselian \(V\)-topology which extends to the unique Henselian
\(V\)-topology on every \(K_i\). In particular, it induces \(\tau\) on \(K\).
\end{proof}

As in \cite[Proposition\,4.5]{Sch-RegT}, we will see that minimal elements are
unique (up to isomorphism).

\begin{proposition}
\label{PTC dense}
Let \(\tau\) be a \(\mathrm{V}\)-topology on \(K\) and  \(F \in C_{\tau}\) be minimal. Then \(F\) is \(\Lrg(K)\)-homeomorphic to \(K_\tau\), in particular, \(K\) is \(\tau\)-dense in \(F\).
\end{proposition}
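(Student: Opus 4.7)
The plan is to exhibit an $\Lrg(K)$-isomorphism $F \cong K_\tau$ that is simultaneously a homeomorphism; the $\tau$-density of $K$ in $F$ will then follow, because $K$ is $\tau$-dense in its completion $\compl{(K,\tau)}$ and hence in the intermediate subfield $K_\tau = \sep K \cap \compl{(K,\tau)}$.

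First I would construct a canonical topological $\Lrg(K)$-embedding $\iota \colon K_\tau \hookrightarrow F$. Since $F \in C_\tau$, I can fix $M \models T_K$ with $F = K_M$, together with a Henselian V-topology $\tau_F$ on $F$ inducing $\tau$ on $K$. The field $(F,\tau_F)$ is then a Henselian V-topological extension of $(K,\tau)$. By the universal property of the Henselization of a V-topological field (which is canonically identified with $\sep K \cap \compl{(K,\tau)} = K_\tau$), there is a unique $\Lrg(K)$-embedding $\iota \colon K_\tau \hookrightarrow F$, and it is automatically a topological embedding with respect to the subspace topologies.

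The second step would be to show that $K_\tau$ itself belongs to $C_\tau$. Concretely, this amounts to producing a model $M' \models T_K$ with $K_{M'} = K_\tau$ and a Henselian V-topology on $K_{M'}$ inducing $\tau$ on $K$. Starting from the $M$ above (which already contains $K_\tau$), my goal is to replace it with a model of $T$ whose intersection with $\sep K$ is exactly $K_\tau$, i.e., a model regular over $K_\tau$. I would achieve this through a compactness/Zorn argument over the theory $T_{K_\tau}$ augmented with axioms excluding every element of $\sep{K}\sminus K_\tau$, using the PTC hypothesis on $K$ (and the resulting largeness of its algebraic extension $K_\tau$) to keep the process consistent; a suitable variant of \cref{better PTC} should help ensure the extension produced is totally $T$. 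Once $M'$ is built, hypothesis (H) equips $K_{M'} = K_\tau$ with a Henselian V-topology, and by the uniqueness of Henselian V-topologies on non-separably closed fields \cite[Theorem 7.9]{PreZie} this topology must be the canonical one extending $\tau$.

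With both steps in place, $\iota \colon K_\tau \hookrightarrow F$ is an $\Lrg(K)$-embedding between elements of $C_\tau$; the minimality of $F$ then forces $\iota$ to be surjective, and combined with the topological content of Step 1 it yields the desired $\Lrg(K)$-homeomorphism $K_\tau \cong F$. I expect Step 2 to be the main obstacle: producing a model of $T_{K_\tau}$ that is regular over $K_\tau$ while preserving the V-topological data requires carefully marrying the PTC property of $K$ with (H), and this is where most of the work should lie.
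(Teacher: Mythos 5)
Your Step~1 (the embedding $\iota\colon K_\tau\hookrightarrow F$) is correct and essentially reproduces the paper's first claim, via the universal property of the completion together with the fact that $F$, being topologically Henselian, is separably closed in $\compl{(F,\tau_F)}$.

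Step~2 is where the genuine gap lies. You aim to show $K_\tau\in C_\tau$ directly, i.e., to produce $M'\models T_K$ with $M'\cap\sep{K}=K_\tau$. This conclusion is \emph{equivalent} to the proposition you are trying to prove: once the proposition holds, $K_\tau\cong F\in C_\tau$ gives it, and conversely it plus minimality of $F$ forces $\iota$ to be surjective. So Step~2 is not a reduction — it is the whole statement. Worse, the compactness scheme you sketch cannot get off the ground: for $T=\RCF$, any real closed field containing $K_\tau$ contains a real closure of $K_\tau$, so the theory $T_{K_\tau}$ plus "no new elements of $\sep{K}$" is consistent \emph{only if} $K_\tau$ is already real closed — and proving that is precisely the nontrivial content of the proposition (Prestel's density theorem). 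Largeness of $K_\tau$ gives density of rational points on varieties with smooth points; it does not feed into the consistency statement you need.

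The paper's proof sidesteps this by establishing a strictly weaker intermediate fact: its second claim shows merely that \emph{some} $K_M$ admits an $\Lrg(K)$-embedding $\rho\colon K_M\to\compl{(K,\tau)}$. This is proved by a concrete geometric use of PTC: assuming no $K_M$ embeds, compactness produces a separable $P\in K[x]$ with a zero in every $K_M$ but none in $\compl{(K,\tau)}$; from $P$ and a bounding constant $c$ one builds $Q(x,y)=P(x)\bigl(1-c^{-1}P(y)\bigr)-P(0)$, whose zero locus is a geometrically integral, totally $T$ curve, and applying PTC to it contradicts the boundedness condition that produced $c$. After this, the composition $\sigma\circ\rho\colon K_M\to K_\tau\to F$ is an $\Lrg(K)$-embedding between elements of $C_\tau$ (after disposing separately of the case $K_M\in C_{\tau'}$ for some $\tau'\neq\tau$, which your plan does not address), so minimality of $F$ makes it surjective, hence $\sigma$ is surjective and the homeomorphism follows. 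In short, your plan targets a stronger intermediate claim than is needed, and the compactness idea does not contain the geometric construction that actually makes the argument go through.
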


\begin{proof}
We start with two intermediary results:
\begin{claim}
There exists a \(\tau\)-continuous \(\Lrg(K)\)-embedding \(\sigma : K_\tau \to F\).
\end{claim}

\begin{proof}
There is a continuous \(\Lrg(K)\)-embedding \(\sigma : \compl{(K,\tau)} \to \compl{(F,\tau)}\), by universality of the completion.
Since \(K\subseteq K_\tau\) is separable and \(F\), being topologically Henselian, is separably closed in \(\compl{(F,\tau)}\), \emph{cf.} \cite[Corollary\,7.6]{PreZie}, we have \(\sigma(K_\tau)\substr \compl{(F,\tau)}\cap \sep{K} = F\).
\end{proof}

\begin{claim}
There exists \(M\models T_K\) and an \(\Lrg(K)\)-embedding \(\rho: K_M \to \compl{(K,\tau)}\).
\end{claim}

\begin{proof}
If no \(K_M\) embeds in \(\compl{(K,\tau)}\), then, by compactness, there exists
a (non-trivial) separable \(P \in K[x]\) with a zero in each \(K_M\) but no zero
in \(\compl{(K,\tau)}\). Then there exists a neighborhood \(U\subseteq K\) of
\(0\) such that \(P(\compl{(K,\tau)})\cap U = \emptyset\). In particular,
\(P(K)\cap U = \emptyset\). It follows that \(P(K)^{-1}\) is bounded and hence
so is \(1-P(0)P(K)^{-1}\). Let \(c\in K^\star\) be such that
\(c(1-P(0)P(K)^{-1})\subseteq U\). Let \(Q(x,y) = P(x)(1-c^{-1}P(y)) - P(0)\in
K[x,y]\). By \cite[Proposition\ 1.1]{HeiPre}, the zero locus \(V\) of \(Q\) is
geometrically integral. For every \(M\models T\), one can check that
\((0,a)\in\sm{V}(M)\), where \(a\in M\) is a zero of \(P\). So there exists
\((b,a) \in V(K)\), \emph{i.e.} \(P(b) = c(1-P(0)P(a)^{-1}) \in U\), a
contradiction.
\end{proof}

Note that, since \(K\leq K_M\) is separable, \(\rho(K_M)\leq K_\tau\). If \(K_M \in C_{\tau'}\), for some \(\tau'\neq \tau\), then \(\tau'\) extends to a Henselian \(V\)-topology on \(K_{\tau}\), distinct from \(\tau\). Moreover, \(K_\tau\) is, by definition, separably closed in its \(\tau\)-completion. It follows by \cite[Improved Theorem\ 7.9]{PreZie}, that \(K_\tau = \sep{K}\), and hence \(F\) \(\Lrg(K)\)-embeds in \(K_\tau\). So we may assume that \(K_M\in C_\tau\).

Now, the \(\Lrg(K)\)-embedding  \(\sigma\comp\rho : K_M\to K_\tau\to F\) must be
surjective by minimality of \(F\) and hence \(\sigma\) is a \(\tau\)-continuous \(\Lrg(K)\)-isomorphism. In particular, \(K\) is \(\tau\)-dense in \(F\). It follows that the identity on \(K\) extends to a (unique) \(\tau\)-continuous \(\Lrg(K)\)-morphism  \(F \to K_\tau\) which is the inverse of \(\sigma\); and \(\sigma\) is indeed an \(\Lrg(K)\)-homeomorphism.
\end{proof}

\begin{notation}
Fix \((\tau_i)_{0\leq i < n}\) distinct $V$-topologies on \(K\) and \(K_i = K_{\tau_i} \in C_{\tau_i}\) minimal. 
\end{notation}

Since, by Stone's approximation theorem (see \cite[Theorem 4.1]{PreZie}), distinct $V$-topologies are independent, we deduce the following approximation result on the affine line:

\begin{corollary}\label{approx aff}
For every non-empty \(\tau_i\)-open \(O_i \leq K_i\), \(\bigcap_i O_i\cap K \neq \emptyset\).
\end{corollary}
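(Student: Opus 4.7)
The plan is to combine the density conclusion from \cref{PTC dense} with Stone's approximation theorem, which is already invoked right before the corollary to state that distinct $V$-topologies are independent.

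First, I would argue that for each $i < n$, the set $O_i \cap K$ is nonempty and $\tau_i$-open in $K$. Indeed, by \cref{PTC dense}, there is an $\Lrg(K)$-homeomorphism $K_i \to K_{\tau_i}$ identifying the $V$-topology on $K_i$ with the one inherited from the $\tau_i$-completion of $K$; in particular $K$ sits $\tau_i$-densely inside $K_i$. Since $O_i$ is a nonempty $\tau_i$-open in $K_i$, density yields $O_i \cap K \neq \emptyset$, and this intersection is of course $\tau_i$-open in $(K,\tau_i)$.

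Next, I would apply Stone's approximation theorem in the form stated in \cite[Theorem~4.1]{PreZie}: the distinct $V$-topologies $\tau_0,\ldots,\tau_{n-1}$ on $K$ are independent, meaning that for any choice of nonempty $\tau_i$-open sets $U_i \subseteq K$, the intersection $\bigcap_{i<n} U_i$ is nonempty. Taking $U_i := O_i \cap K$ then gives a point of $K$ belonging to every $O_i$, which is precisely the desired conclusion.

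There is essentially no obstacle once \cref{PTC dense} is in hand: the whole content of the corollary is transferring the nonempty opens $O_i$ from $K_i$ down to nonempty opens in $K$ via density, and then invoking independence of the topologies. The proof is a two-line concatenation of these two ingredients.
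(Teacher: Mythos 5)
Your argument matches the paper's: $K$ is $\tau_i$-dense in $K_i$ by \cref{PTC dense}, so each $O_i\cap K$ is a nonempty $\tau_i$-open subset of $K$, and Stone's approximation theorem (independence of distinct $V$-topologies) then yields a common point. This is exactly the reasoning the paper uses (it is folded into the sentence preceding the corollary rather than a separate proof environment).
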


Our goal now is to extend this approximation result to arbitrary geometrically integral totally \(T\) varieties. We don't know, however, if we can do so without the following finiteness hypothesis:

\begin{hyp}\label{F1}
Until the end of \cref{PTC}, we assume :
\begin{itemize}
\item[\textbf{(F$_1$)}] There are only finitely many non-separably closed \(K_\tau\), among all possible $\tau$.
\end{itemize}
\end{hyp}

\begin{prop}\label{approx curve} Let \(C\) be a totally \(T\) smooth projective geometrically
integral curve over \(K\). For every non-empty \(\tau_i\)-open
sets \(O_i\subseteq C(K_i)\), \(\bigcap_{i < n} O_i \cap C(K)\) is infinite.
\end{prop}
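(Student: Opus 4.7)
The plan is to reduce the curve-approximation statement to the affine-line approximation (\cref{approx aff}) via a finite separable covering $f\colon C\to\Pp^1$, using the inverse function theorem (\cref{InvFuncThe}) to transfer approximations, and invoking the \(\PTC\) property to produce \(K\)-rational lifts from \(K_i\)-rational ones.

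Since $C$ is smooth projective and geometrically integral, $K(C)/K$ is regular and separable of transcendence degree one, so there is a finite separable morphism $f\colon C\to\Pp^1$. Its ramification locus $R$ and the fibre $f^{-1}(\infty)$ together form a finite set of closed points, yielding a finite subset $B_i := R(K_i)\cup f^{-1}(\infty)(K_i)$ of $C(K_i)$. Each $\tau_i$-topology is non-discrete Hausdorff, and $C(K_i)$ has no isolated points at smooth points (by applying \cref{InvFuncThe} to a local \(K\)-defined étale chart supplied by \cref{smoothpoint}); so $O_i\sminus B_i$ remains a non-empty $\tau_i$-open, and we may replace $O_i$ by $O_i\sminus B_i$, so that $f$ is étale on $O_i$ with $f(O_i)\subseteq K_i$. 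Picking $x_i\in O_i$ and setting $a_i=f(x_i)\in K_i$, \cref{InvFuncThe} then produces a $\tau_i$-open $U_i$, with $x_i\in U_i\subseteq O_i$, on which $f$ restricts to a $\tau_i$-homeomorphism onto a $\tau_i$-open $N_i\ni a_i$ of $K_i$, with local inverse $s_i\colon N_i\to U_i$. Applying \cref{approx aff} to the $N_i$'s yields $a\in\bigcap_iN_i\cap K$, and iterating (using the non-discreteness of each $\tau_i$ to avoid finitely many previously chosen values) gives infinitely many such $a$; for each, the local inverses deliver \(K_i\)-rational points $s_i(a)\in U_i$ for every $i$.

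The remaining — and crucial — step is to upgrade these disparate \(K_i\)-rational lifts into a single \(K\)-rational point $c\in C(K)$ lying in $\bigcap_iU_i$. This is the main obstacle, because \emph{a priori} the points $s_i(a)$ may correspond to distinct irreducible factors of the fibre polynomial of $f$ over $a$ and need not share a common \(K\)-rational representative. My plan to overcome this is to apply the existential-closure formulation of \(\PTC\) (\cref{equiv PTC}.(iii)) to the regular totally \(T\) extension $K\leq K(C)$: using \cref{InvFuncThe}, the topological condition \enquote{$c\in U_i$ for every $i$} can, after refining $a$ by further iterations of \cref{approx aff}, be captured by a system of polynomial equations and inequations over $K$ on the fibre polynomial, whose solvability is guaranteed at the generic point of $C$ (viewed in $K(C)$) by Hensel's lemma. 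This ring-theoretic formula is then realised in $K(C)$, and the existential closure of $K$ in $K(C)$ returns a $K$-rational witness. The delicate technical heart is carrying out this encoding: \cref{F1} ensures only finitely many $\tau_i$ intervene at once, \cref{PTC dense} provides the density of $K$ in each $K_i$, and the flexibility in choosing $a$ (infinitely many options from the iteration above) provides the needed algebraic genericity. Iterating — by removing previously found \(K\)-points from each $O_i$, which remains a non-empty $\tau_i$-open — yields infinitely many \(K\)-rational points in $\bigcap_iO_i$, as required.
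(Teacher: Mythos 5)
Your setup follows the same overall shape as the paper's: produce a finite separable $f:C\to\Pp^1$, remove bad fibres, use \cref{InvFuncThe} to get local sections $s_i$, and apply \cref{approx aff} on the base. That much is sound. But the step you flag as "the remaining — and crucial — step" is indeed the whole difficulty, and the method you sketch for it does not work.

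Here is the problem. After finding $a\in\bigcap_i N_i\cap K$, you have $K_i$-rational lifts $s_i(a)\in U_i$ for each $i$; you want a single $c\in C(K)$ with $c\in\bigcap_i U_i$. You propose to capture "$c\in U_i$ for all $i$" as a quantifier-free $\Lrg(K)$-system satisfiable at the generic point of $C$, and then invoke existential closure of $K$ in $K(C)$. This fails on two counts. First, the condition "$c\in U_i$" is a topological condition relative to $\tau_i$ on $K_i$; it is simply not expressible as an $\Lrg(K)$-formula, and there is no $\tau_i$-topology on $K(C)$ under which the generic point is "close to" $s_i(a)$, so there is nothing for Hensel's lemma to bite on. Second, even after restricting attention to a single fibre $f^{-1}(a)$, the scheme $f^{-1}(a)$ is finite étale over $K$ and need not be geometrically integral; the \(\PTC\) property (\cref{equiv PTC}) only produces $K$-points on \emph{geometrically integral} totally $T$ varieties, so it gives you nothing about the fibre. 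The local lifts $s_i(a)$ may lie in distinct Galois orbits inside the fibre, and no amount of refining $a$ by further calls to \cref{approx aff} forces them into a common $K$-rational orbit.

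The paper circumvents exactly this obstruction by an induction on the number $n$ of topologies, combined with two auxiliary constructions you do not have: (1) the morphism $f$ is chosen via Riemann--Roch so that its poles are a prescribed finite set $P\subseteq\bigcap_{i>0}O_i\cap C(K)$ of $K$-rational points already produced by the inductive hypothesis; (2) one forms the normalization $D$ of the $e$-fold fibre power of $C$ over $\Pp^1$ (where $e$ is the maximal number of simultaneous $K_0$-lifts) and then a Koll\'ar-type variety $B_s\subseteq D\times D$ cut out by $(s^2u_1u_2-v_1v_2)\comp(h\times h)=0$, which \emph{is} geometrically integral and totally $T$ for generic $s$ (by \cite[Lemma 15]{Kol-PAC}), so the inductive hypothesis applies to it. The maximality of $e$ then forces the $K$-rational points so found to land in $O_0$. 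These steps are precisely what turns a collection of fibre-wise local data into an actual geometrically integral variety to which \(\PTC\) (and the induction) can be applied. Without them, the gap in your proposal is genuine: the passage from "$K_i$-lifts for each $i$" to "a single $K$-point" is missing.
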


This statement (and its proof) is inspired by similar statements in two
important and orthogonal subcases: Henselian fields enriched with further
valuations \cite[Theorem~4.1]{Joh-MultVal} (see also
\cite[Proposition~4.2]{HalHasJah}) and PAC fields with a valuation
\cite[Theorem~2]{Kol-PAC}.

\begin{proof} We proceed by induction on \(n\). If \(n = 0\), since \(C(K)\) is
Zariski dense in \(C\), it is, in particular, infinite. Let us now assume \(n >
0\). By induction, \(\bigcap_{i > 0} O_i\cap C(K)\) contains a finite set \(P\)
of size \(d\) larger than the genus of \(C\). By Riemann-Roch, there exists a
non-constant rational function \(f\) on \(C\) over \(K\), whose set of poles is
contained in \(P\) and whose poles are all simple. This gives rise to a morphism
\(f : C \to \Pp^1\) over \(K\) with \(f^{-1}(1:0) \subseteq P \subseteq
\bigcap_{0<i<m} O_i\cap C(K)\) and which is étale above \((1:0)\).

Let \(\tau\) be any \(V\)-topology on \(K\) distinct from \(\tau_0\). If \(\tau
= \tau_i\), let \(O_\tau = O_i\) and otherwise, let \(O_\tau = V(K_\tau)\). By
the inverse function theorem, \cref{InvFuncThe}, for every \(p\in P \subseteq
O_\tau\) there exists a \(\tau\)-open neighborhood \(W_{p,\tau}\subseteq
O_\tau\) of \(p\) such that \(\restr{f}{W_{p,\tau}}\) is a homeomorphism unto
its open image. We may assume that the \(W_{p,\tau}\) do not intersect when
\(p\) varies. Let \(U_\tau := \bigcap_{p\in P} f(W_{p,\tau})\). It is open (and
hence Zariski dense) in  \(\Pp^1(K_\tau)\). For every \(y \in U_\tau\),
\(f^{-1}(y)\) consists of \(d\) distinct elements of \(O_\tau\).

Let \(e\geq 1\) be maximal such that \(\{t\in \Pp(K_0) \mid \exists^\neq y_0
\ldots y_{e-1}\in C(K_0)\ f(y_j) = t\) and \(y_0\in O_0\}\) is infinite. Let
\(D\) be the normalization of the irreducible component of the \(e\)-fold
product of \(C\) over \(\Pp^1\) containing \(y_{< e}\). Then \(K\leq K(D) =
K(y_{< e})\) is regular and \(D\) is geometrically integral. Let \(g : D \to C\)
be the projection on the first coordinate and \(h = f\comp g\). Let \(W_{\tau_0}
:= \{x\in O_0 \mid \exists y\in D(K_0) \mid g(y) = x\) and \(g\) is smooth a
\(y\}\) and \(U_{\tau_0} = f(W_{\tau_0})\). Recall that, \(\tau_0\) being
Henselian, \(f\) and \(g\) are local homeomorphisms at smooth points, so both
sets have non-empty \(\tau_0\)-interior. Shrinking them, we may assume that they
are \(\tau_0\)-open and that \(U_{\tau_0}\) is a ball.

Let \(J := \{\tau \mid K_\tau\neq \sep{K_\tau}\} \cup\{\tau_i \mid i < n\}\),
which is finite by \textbf{(F$_1$)}. By \cref{approx aff}, there exists \(a\in
\bigcap_{\tau\in J} U_{\tau} \cap K\). Composing \(f\) with a degree one
morphism, we may assume that \(a = (1:0)\). For every \(s\in\Aa^1\), let \(B_{s}
\subseteq D\times D\) be defined by \((s^2 u_1 u_2 - v_1 v_2)\comp (h\times h) =
0\), where \(([u_1:v_1],[u_2,v_2])\) are coordinates on \(\Pp_1\times\Pp_1\). By
\cite[Lemma\ 15]{Kol-PAC}, for all but finitely many \(s\), \(B_s\) is
geometrically integral. Applying \cref{approx aff} again, we find \(s\in
K^\times\) such that \(B_s\) is geometrically integral and \((1:s) \in
\bigcap_{\tau\in J} U_{\tau}\). Then, for every \(\tau\in J\) and \(y\in
D(K_\tau)\) with \(h(y) = (1:s)\), \((y,y)\) is a smooth point of \(B_s(K_\tau)\)
--- with \((h\times h)(y,y) \in U_\tau\times U_\tau\). Note that for every
\(M\models T_K\), if \(K_M\) is not separably closed, it contains some
non-separably closed \(K_\tau\) and hence there is a smooth \(M\)-point on
\(B_s\). If \(K_M\) is separably closed, \(B_s\) also has a smooth \(M\)-point,
so \(B_s\) is totally \(T\).

By induction, we can find infinitely many \((y_1,y_2)\in B_s(K)\) with \(h(y_k)
\in \bigcap_{0 < i} U_{\tau_i}\sminus \{(0:1)\}\). Considering the affine
coordinates \(v/u\) on \(\Pp^1\), we see that one of the \(h(y_k)\) is
\(\tau_0\)-closer to \(0\) than \(s\) and hence \(h(y_k) \in U_{\tau_0}\). In
other words, we have found infinitely many \(t \in \bigcap_{i < n}
U_{\tau_i}(K)\) and \((x_{t,i})_{i<e} \in C(K)\) with \(f(x_{t,i}) = t\). By
construction there exist \(x_t\in O_0\) with \(f(x_t) = t\). By maximality of
\(e\), \(x_t\) is distinct from the \(x_{t,i}\) for at most finitely many \(t\).
So we may assume that \(x_{t,0} = x_t \in O_0(K)\). Moreover, for every \(i >
0\), since \(f(x_t) = t \in U_{\tau_i}\), we have \(x_t \in f^{-1}(U_{\tau_i})
\subseteq O_i\) and hence \(x_t \in \bigcap_i O_i(K)\).
\end{proof}

\begin{theorem}\label{approx var}
Let \(V\) be a geometrically integral \(K\)-variety. Let \(O_i\subseteq \sm{V}(K_i)\) be non-empty \(\tau_i\)-open sets, for every \(i < n\). Then \(\bigcap_{i < n} O_i\cap V(K) \neq \emptyset\).
\end{theorem}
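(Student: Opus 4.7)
The idea is to induct on $d := \dim V$, reducing to \cref{approx curve} via iterated hyperplane sections. For $d \leq 1$, I would pass to the smooth projective model $\tilde V$ of $V$; it is totally $T$ because smooth $M$-points lift, and each $O_i$ remains a $\tau_i$-open inside $\tilde V(K_i)$, so \cref{approx curve} yields a $K$-point which, being inside $O_i \subseteq \sm V$, lies in $V$.

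For $d \geq 2$, I would fix smooth $p_i \in O_i$ and let $J$ be the (finite, by \cref{F1}) set of $V$-topologies $\tau$ on $K$ with $K_\tau$ not separably closed, so $\{\tau_i : i < n\} \subseteq J$; for each $\tau \in J \setminus \{\tau_i : i < n\}$, fix a smooth $K_\tau$-point $p_\tau$ of $V$ (available since $V$ is totally $T$ and $K_\tau \in C_\tau$). Embed $V \hookrightarrow \Pp^N$ over $K$ and let $\Pp^\ast$ denote the dual projective space. The strategy is to find $H \in \Pp^\ast(K)$ such that $W := V \cap H$ is (a) geometrically integral of dimension $d-1$, (b) smooth at some $K_\tau$-point close to $p_\tau$ for each $\tau \in J$ (in particular smooth at a $K_i$-point in $O_i$ for each $i < n$), and (c) totally $T$; then induction applied to $W$ with the refined opens $O_i \cap \sm W(K_i)$ produces the desired $K$-point in $\bigcap_i O_i \cap V(K)$. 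Condition (a) holds for $H$ in a $K$-defined dense Zariski-open $\mathcal U \subseteq \Pp^\ast$ by Bertini, and (b) amounts to $H$ being $\tau$-close to a non-empty $\tau$-open $\mathcal V_\tau$ of hyperplanes through $p_\tau$ in general position at $p_\tau$ (the inverse function theorem, \cref{InvFuncThe}, then yields a nearby smooth $K_\tau$-point of $W$). Both can be arranged simultaneously over $K$ by applying \cref{approx aff} coordinate-wise to the homogeneous coefficients of $H$, using that $\mathcal U(K_\tau)$ is $\tau$-open and $\tau$-dense inside $\Pp^\ast(K_\tau)$.

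The main obstacle will be condition (c). For $M \models T_K$ with $K_M$ not separably closed, the uniqueness of the Henselian $V$-topology on non-separably closed fields ensures that the topology $\tau_M$ it induces on $K$ has $K_{\tau_M}$ not separably closed (else $K_M \subseteq \sep K = K_{\tau_M} \subseteq K_M$ forces $K_M = \sep K$), so $\tau_M \in J$; the minimality of $K_{\tau_M}$ in $C_{\tau_M}$ (\cref{PTC dense}) then gives an $\Lrg(K)$-embedding $K_{\tau_M} \hookrightarrow K_M$, so the smooth $K_{\tau_M}$-point of $W$ near $p_{\tau_M}$ becomes a smooth $M$-point. For $M$ with $K_M$ separably closed, we have $\sep K \subseteq M$, and the smooth geometrically integral variety $W$ acquires a smooth $\sep K$-point (closed points of a smooth $K$-variety have separable residue field extensions of $K$, so any $\alg K$-point descends to $\sep K$), hence a smooth $M$-point. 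This establishes (c) and closes the induction.
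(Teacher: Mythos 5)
Your approach is genuinely different from the paper's. The paper bypasses the induction on dimension entirely: it cites a space-curve theorem of Jarden--Razon (their Lemma~10.1) to produce, in one step, a smooth geometrically integral affine $K$-curve $C\subseteq V$ passing through the chosen points $a_i\in O_i\subseteq V(\sep{K})$, and then applies \cref{approx curve} to the smooth projective model of $C$. Your iterated Bertini hyperplane-section reduction is a plausible substitute for that citation, and the verification of condition~(c) via minimality of $K_{\tau_M}$ and separability of smooth points is essentially the same bookkeeping that is implicit in the paper's one-step reduction; but it is considerably longer, and each dimension-dropping step requires re-checking geometric integrality, total~$T$-ness, and the nonvanishing of all the opens, which the Jarden--Razon lemma handles all at once. (Both arguments, yours and the paper's, silently use that $V$ is totally~$T$, which should really appear in the statement of \cref{approx var} --- without it the conclusion is false, since a smooth $K$-point forces $\sm{V}(M)\neq\emptyset$ for every $M\models T_K$.)

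The one step in your proposal that is genuinely under-justified is the appeal to \cref{InvFuncThe} in condition~(b). The relevant map --- the projection from the incidence variety $\{(x,H)\mid x\in V\cap H\}$ down to the space of hyperplanes --- is smooth of relative dimension $d-1$ near $(p_\tau,H_0)$ when $H_0$ meets $V$ transversally at $p_\tau$, but it is not étale, and \cref{InvFuncThe} as stated applies only to étale morphisms. You need an extra reduction: factor the smooth map locally as an étale morphism to $\Pp^\ast\times\Aa^{d-1}$ followed by the coordinate projection, apply \cref{InvFuncThe} to the étale piece, and then evaluate at the appropriate $\Aa^{d-1}$-slice to produce the smooth $K_\tau$-point of $W=V\cap H$ near $p_\tau$. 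Equivalently, work near $p_\tau$ with a fixed étale projection $U\to\Aa^d$ coming from \cref{smoothpoint} and cut by a $K$-rational linear form close (in each $\tau\in J$ simultaneously, via \cref{approx aff}) to one of the coordinates. Either of these fills the gap, but as written the inverse function theorem is being invoked for a map to which the cited statement does not directly apply.
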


\begin{proof}
For every \(i < n\), fix an \(a_i \in O_i \subseteq V(\sep{K})\). By \cite[Lemma~10.1]{JarRaz} there exists a smooth geometrically integral affine \(K\)-curve \(C\subseteq V\) containing the \(a_i\). Let \(U_i = O_i\cap C\) which is a \(\tau_i\)-open non-empty subset of \(C(K_i)\). Let \(\overline{C}\) be a smooth projective model of \(C\). By \cref{approx curve}, there exists \(y\in\bigcap_i U_i\cap\overline{C}(K) \subseteq \bigcap_i O_i\cap V(K)\).
\end{proof}

\begin{remark}
Schmid proved a similar result, \cite[Theorem\ 4.9]{Sch-RegT}, without any
finiteness hypothesis, but requiring that no minimal \(K_M\) is separably closed
--- a case that we certainly do not want to omit if we want to say anything
about valued pseudo algebraically closed fields.

On the other hand, the proof we give here relies heavily on hypothesis
\textbf{(F$_1$)}. Without it, we would have to find points in infinitely many
(uniformly defined) open sets for independent topologies. This is reminiscent of
known approximation theorems (\emph{eg}. \cite{AnsDitFeh-Approx}), but it is not
obvious that they apply here.
\end{remark}

\section{The class of \texorpdfstring{\(I\)}{I}-pseudo \texorpdfstring{\(T\)}{T}-closed fields} \label{IPTC}

Let us fix the following notation for the rest of the text. As before, let
\(\cL\) enrich the language of rings, \(T\) be an \(\cL\)-theory of large
fields.

Let \(I\) be a (potentially infinite) set of theories which eliminate
quantifiers in (disjoint) relational expansions of the language of rings. We
will often denote by \(T_i\) the element \(i\in I\) and \(\cL_i\) its language.
From now on, all \(\cL_i\)-formulas will be assumed to be quantifier free. Let
\(\cL_I = \bigcup_i \cL_i\) and \(T_I = \bigcup_i T_{i, \forall}\). 

For every \(K\models T_{i,\forall}\), we denote by $T_{i,K}$ the theory of models of $T_i$ containing $K$.
We will also assume that any \(M_i\models T_{i,K}\) can be expanded to a model of \(T_K\). When \(I\) is finite, this can always be assumed by replacing \(T\) by \(T\vee\bigvee_i T_i\) whose models are the \(\cL\cup\cL_I\)-structures \(M\) such that \(\restr{M}{\cL}\models T\) or \(\restr{M}{\cL_i}\models T_i\) for some \(i\).

For each \(i\in I\), we will denote by $\acl_i$ and $\dcl_i$ the algebraic and definable closure in models of $T_i$. We will also write $\TP^i(K)$ for the space of (quantifier free) $\cL_i$-types with parameters in $K$. 
We will use the notation $\tp_i(a/A)$ for the type of $a$ over $A$ in the language $\cL_i$.

Let us fix some \(\cL\)-field \(K\models T_I\) and \(M_i \models T_{i, K}\), for all \(i \in I\).

\begin{lemma}\label{geom Z dense} Let \(V\) be an irreducible variety over \(K\)
and let us assume that \(\dcl_i(K) \subseteq \alg{K}\). Then, for any
\(\cL_i(K)\)-definable \(X\subseteq V\), \(X(M_i)\) is Zariski dense in \(V\) if
and only if it is \(K\)-Zariski dense in \(V\) (\emph{i.e.} is not contained in
any proper sub-variety of \(V\) over \(K\)).
\end{lemma}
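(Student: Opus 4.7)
The forward direction is immediate: any proper $K$-subvariety of $V$ is a proper closed subvariety, so $K$-Zariski density follows from Zariski density. For the converse, I plan to argue by contraposition: assuming $X(M_i)$ is contained in some proper closed subvariety of $V$, I shall produce a proper $K$-subvariety of $V$ containing it. Reducing to $V$ affine (using an affine open meeting $X(M_i)$ and the irreducibility of $V$) and applying standard linear algebra (a linear system over $M_i$ has a non-zero solution over any extension iff it does over $M_i$), I obtain some degree $d$ and a non-zero $f \in M_i[V]_{\leq d}$ vanishing on $X(M_i)$. Let $\tilde D \subseteq M_i[V]_{\leq d}$ be the non-zero $\cL_i(K)$-definable $M_i$-linear subspace of all such polynomials.

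The crux is to show that $\tilde D$ is defined over $\alg K$. Fixing a $K$-basis $g_1, \ldots, g_N$ of $K[V]_{\leq d}$ identifies $\tilde D$ with an $\cL_i(K)$-definable $M_i$-linear subspace of $M_i^N$. Its reduced row echelon form provides a canonical matrix representative whose entries are uniquely determined by $\tilde D$ (not by any choice of basis), and hence are $\cL_i(K)$-definable points of $M_i$. By the hypothesis, each such entry lies in $\dcl_i(K) \subseteq \alg K$. Therefore $\tilde D$ admits an $\alg K$-basis, and I may select some non-zero $f_0 \in \tilde D \cap \alg K[V]_{\leq d}$.

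To descend from $\alg K$ to $K$, I set $G := \prod_\sigma \sigma(f_0)$, where $\sigma$ ranges over the finite orbit of $f_0$ under the action of $\mathrm{Aut}(\alg K/K)$ on coefficients. Then $G$ is $\mathrm{Aut}(\alg K/K)$-invariant, so its coefficients lie in the perfect hull of $K$ in $\alg K$; applying a suitable Frobenius power in positive characteristic (or nothing in characteristic zero) then yields a polynomial $H \in K[V]$ which is non-zero (as $K[V]$ is a domain by integrality of $V$) and still vanishes on $X(M_i)$. Hence $\{H = 0\}$ is a proper $K$-subvariety of $V$ containing $X(M_i)$, contradicting the assumed $K$-Zariski density.

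The main delicate point will be the second paragraph, where the $\cL_i(K)$-invariance of $\tilde D$ (automatic from $X$ being $\cL_i(K)$-definable) must be converted into an honest statement about its defining data having coefficients in $\alg K$. The uniqueness of the reduced row echelon form is the cleanest lever I see for this, sidestepping any need to unpack $\dcl_i^{\mathrm{eq}}$ or a Grassmannian canonical parameter, and it is precisely here that the hypothesis $\dcl_i(K) \subseteq \alg K$ earns its keep.
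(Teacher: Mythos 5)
Your strategy is the dual of the paper's. The paper works with the Zariski closure $W$ of $X(M_i)$: it observes that $W$ is $\aut(M_i/K)$-invariant, hence defined over $\dcl_i(K)\subseteq\alg K$, then takes the union $\bigcup_j W_j$ of its finitely many $K$-conjugates --- a $K$-variety --- and concludes by irreducibility of $V$. You instead work with the degree-$\leq d$ part of the vanishing ideal $\tilde D$, and your RREF argument is precisely the device that makes "defined over $\dcl_i(K)$" precise, so up to that point the two proofs are the same observation viewed from the ideal side rather than the variety side. Where you truly diverge is the descent from $\alg K$ to $K$: the paper takes a \emph{union} of conjugate subvarieties, while you take a \emph{product} (the Galois norm) of a single conjugate polynomial followed by a Frobenius power.

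There is a gap in your final step, however, and it is exactly where the product-versus-union distinction bites. The phrase "$H$ is non-zero as $K[V]$ is a domain by integrality of $V$" does not do the work you need: $G=\prod_\sigma\sigma(f_0)$ lives in $\alg K\otimes_K K[V]$, not in $K[V]$, so what you must know is that $\alg K\otimes_K K[V]$ is a domain --- that is, that $V$ is \emph{geometrically} integral, not merely integral over $K$. Without geometric integrality the ring $\alg K\otimes_K K[V]$ can have zero-divisors, and the Galois norm of a non-zero element can vanish. Concretely: take $K=\Qq$, $V=\Spec\Qq[x,y]/(x^3-2)$ (irreducible over $\Qq$, three conjugate lines over $\alg\Qq$), $T_i=\RCF$ and $X=V$; then the degree-$\leq 1$ part of $\tilde D$ is spanned by $f_0=x-2^{1/3}$ (the unique real cube root lies in $\dcl_i(\Qq)$), and $G=\prod_\sigma\sigma(f_0)=x^3-2\equiv 0$ in $K[V]$, so your $H$ cuts out nothing. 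This is not an artefact of your method alone --- the paper's own final deduction "$V=\bigcup_j W_j=W$" also silently invokes geometric irreducibility of $V$, and the lemma is only ever applied downstream (\cref{equiv SPTC}) to geometrically integral $V$ --- but your write-up should make the hypothesis explicit and replace the incorrect justification by: $\alg K\otimes_K K[V]$ is a domain (geometric integrality), each $\sigma(f_0)$ is non-zero there, hence $G\neq 0$ there, hence $H=G^{p^k}\neq 0$ in $K[V]\hookrightarrow\alg K\otimes_K K[V]$.
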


\begin{proof}
Note first that this statement is clear for \(K = M_i\) --- \emph{i.e.} \(X(M_i)\) is Zariski dense in \(V\) if and only if it is \(M_i\)-Zariski dense in \(V\). Let us now assume that \(X(M_i)\) is \(K\)-Zariski dense in \(V\). We may assume \(M_i\) is sufficiently saturated and homogeneous. Let \(W\subseteq V\) be the Zariski closure of \(X(M_i)\). Then \(W\) is \(\aut(M_i/K)\)-invariant and hence it is defined over \(\dcl_i(K) \subseteq\alg{K}\). Let \((W_j)_{j\leq n}\) be the \(K\)-conjugates of \(W\). Then \(X(M_i) \subseteq \bigcup_i W_i \subseteq V\). So \(V = \bigcup_i W_i = W\).
\end{proof}

\begin{remark}
Since finite sets in fields can be coded using symmetric polynomials, we have that \(\acl_i(K) \subseteq \alg{\dcl_i(K)}\). It follows that \(\acl_i(K)\subseteq\alg{K}\) if and only if \(\dcl_i(K)\subseteq \alg{K}\).
\end{remark}

\begin{proposition}\label{equiv SPTC}
The following are equivalent:
\begin{enumerate}[(i)]
\item any regular totally \(T\) extension \(K\leq F\models T_I\) is \(\cL_I\)-existentially closed;

\item for every geometrically integral (affine) totally \(T\) variety \(V\) over \(K\), and, for all \(i\in I\), every \(p_i\in\TP^i(K)\) \(K\)-generic in \(V\), \(\bigcup_i p_i\) is realised in some \(K^\star \supsel K\);

\item for every geometrically integral (affine) totally \(T\) variety \(V\) over \(K\), and, for all \(i\in I_0\subseteq I\) finite, every \(\cL_i(K)\)-definable \(K\)-Zariski dense \(X_i(M_i)\subseteq V\), \(\bigcap_i X_i(K) \neq \emptyset\) --- equivalently, is Zariski dense in \(V\);

\item for all \(i\in I\), \(\dcl_i(K)\subseteq \alg{K}\) and for every geometrically integral (affine) totally \(T\) variety \(V\) over \(K\), and, for all \(i\in I'\subseteq I\) finite, every \(\cL_i(K)\)-definable Zariski dense \(X_i(M_i)\subseteq V\), \(\bigcap_i X_i(K) \neq \emptyset\) --- equivalently, is Zariski dense in \(V\);
\end{enumerate}
\end{proposition}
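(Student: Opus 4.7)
The plan is to prove the cycle $(i) \Rightarrow (ii) \Rightarrow (iii) \Leftrightarrow (iv) \Rightarrow (i)$. The main obstacle is $(i) \Rightarrow (ii)$, where one must simultaneously realize prescribed $\cL_i$-types for many (possibly infinitely many) $i$ on a single geometric object: the languages $\cL_i$ are \emph{a priori} unrelated but must cohabit on a common field extension of $K$ satisfying $T_I$.

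For $(i) \Rightarrow (ii)$, I would set $F := K(V)$. For each $i$, fix a realization $a_i$ of $p_i$ in a sufficiently saturated $M_i \models T_{i,K}$. Since $p_i$ is $K$-generic in $V$, the $K$-algebra $K[a_i]$ is canonically isomorphic to $K[V]$; pull back the $\cL_i$-structure of $M_i$ along this isomorphism and extend it to the fraction field $F$. Each resulting $\cL_i$-reduct of $F$ embeds into $M_i$, so $F \models T_{i,\forall}$, hence $F \models T_I$. The extension $K \leq F$ is regular (as $V$ is geometrically integral) and totally $T$ (as $V$ is totally $T$). Applying (i), $K$ is $\cL_I$-existentially closed in $F$; since the generic point of $V$ in $F$ satisfies every finite subset of $\bigcup_i p_i$ by construction of the $\cL_i$-structures and quantifier-freeness of $p_i$, existential closure plus compactness yield some $K^\star \supsel K$ realizing $\bigcup_i p_i$.

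For $(ii) \Rightarrow (iii)$: given $V$, finite $I_0$ and $K$-Zariski dense $X_i(M_i)$, pick (in a saturated enough $M_i$) a $K$-generic of $V$ inside each $X_i(M_i)$ and let $p_i$ be its $\cL_i$-type over $K$. By (ii), $\bigcup_{i \in I_0} p_i$ is realized in some $K^\star \supsel K$, witnessing $\bigcap_i X_i(K^\star) \neq \emptyset$; hence $\bigcap_i X_i(K) \neq \emptyset$ by elementarity, and iterating with extra conjuncts of the form $f \neq 0$ gives Zariski density. The equivalence $(iii) \Leftrightarrow (iv)$ rests on \cref{geom Z dense}: under $\dcl_i(K) \subseteq \alg{K}$, Zariski density and $K$-Zariski density coincide for $\cL_i(K)$-definable subsets of $V$. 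To derive $\dcl_i(K) \subseteq \alg{K}$ from (iii), note that if $a \in \dcl_i(K) \sminus \alg{K}$, the singleton $\{a\} \subseteq \Aa^1$ defined by the witnessing $\cL_i(K)$-formula is $K$-Zariski dense (as $a$ is transcendental over $K$), yet $\{a\} \cap K = \emptyset$ since $a \notin K \subseteq \alg{K}$, contradicting (iii).

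For $(iv) \Rightarrow (i)$: let $K \leq F$ be regular and totally $T$ with $F \models T_I$, and suppose $a \in F$ realizes an existential $\cL_I(K)$-formula. Absorbing existential witnesses, passing to disjunctive normal form and fixing a realized disjunct, we reduce to a conjunction $\bigwedge_{i \in I_0} \chi_i(x,\bar{k})$ with each $\chi_i$ quantifier-free in $\cL_i$. Let $V$ be the $K$-locus of $a$, which is geometrically integral by regularity of $K \leq K(a)$ and totally $T$ as a subextension of $K \leq F$. For each $i \in I_0$, the $\cL_i$-reduct of $K(a) \subseteq F$ embeds (by quantifier elimination for $T_i$) into some $M_i \models T_i$, so the image of $a$ lies in $X_i(M_i) := \chi_i(M_i) \cap V$, which is thus $K$-Zariski dense and, by \cref{geom Z dense} together with the first clause of (iv), Zariski dense. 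Applying (iv) yields $b \in \bigcap_i X_i(K)$ realizing the original formula.
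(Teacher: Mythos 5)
Your proposal is correct and follows essentially the same route as the paper's own proof: construct an $\cL_I$-structure on $K(V)$ by pulling back along the isomorphisms $K(a_i)\simeq K(V)$ for (i)$\Rightarrow$(ii), specialize $K$-generic points inside the $X_i$ for (ii)$\Rightarrow$(iii), use \cref{geom Z dense} for (iii)$\Leftrightarrow$(iv), and take the locus of a realization for the implication closing the cycle. The only cosmetic differences are that the paper closes the cycle with (iii)$\Rightarrow$(i) rather than (iv)$\Rightarrow$(i), and establishes $\dcl_i(K)\subseteq\alg{K}$ via a minimal finite $\acl_i$-set rather than your (slightly cleaner) $\dcl_i$-singleton.
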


\begin{proof}
\begin{itemize}
\item[(i)\imp(ii)] Let \(p_i\) and \(V\) be as in (ii) and let \(a_i\models
p_i\) (in some \(M_i^\star \supsel M_i\)). Note that \(K\leq K(a_i)\) is
isomorphic to \(K(V)\). These isomorphisms allow us to make \(K(V)\) into a
model of \(T_I\) where all the \(a_i\) coincide. Since \(K\leq K(V)\) is regular
and totally \(T\), we find an \(\cL_I(K)\)-embedding \(f : K(V) \to K^\star\supsel
K\). The common image of the \(a_i\) in \(K^\star\) is a realisation of
\(\bigcup_i p_i\).

\item[(ii)\imp(iii)] Let \(X_i\) and \(V\) be as in (iii). If \(i\nin I_0\), let \(X_i(M_i) = V(M_i)\). Note that since \(M_i\) can be made into a model of \(T_K\) and \(V\) is totally \(T\), \(V(M_i)\) is Zariski dense in \(V\). For all \(i\in I\), by compactness, we find \(a_i\in X_i(M_i^\star)\), where \(M_i^\star \supsel M_i\) which is \(K\)-generic in \(V\). Applying (ii) to \(p_i = \tp_i(a_i/K)\), we find \(a\models \bigcup_i p_i\) in some \(K^\star \supsel K\); in particular, \(a \in \bigcap_{i\in I_0} X_i \sminus W\) where \(W\subseteq V\) is any \(K\)-subvariety.

\item[(iii)$\Leftrightarrow$(iv)] Let us first assume (iii) and prove that \(\dcl_i(K)\subseteq \alg{K}\). By contradiction, consider some \(a\in \acl_i(K)\sminus\alg{K}\). Let \(X_i\) be \(\cL_i(K)\)-definable such that \(X_i(M_i)\) is minimal finite containing \(a\). Then \(X_i(M_i)\) is \(K\)-Zariski dense in \(\Aa^1\). So \(X_i(K)\neq\emptyset\), contradicting the minimality of \(X_i\). The equivalence now follows from \cref{geom Z dense}.

\item[(iii)\imp(i)] Let \(F\) be as in (i) and \(X\) be a quantifier free \(\cL(K)\)-definable set with \(X(F)\neq\emptyset\). We may assume that \(X = \bigcap_{i\in I_0} X_i\), where \(I_0\subseteq I\) finite and \(X_i\) is \(\cL_i(K)\)-definable. Fix some \(a\in X(F)\). Let \(V\) be the locus of \(a\) over \(K\). Since \(K\leq F\) is regular and totally \(T\), \(V\) is geometrically integral and totally \(T\). By construction, \(X_i(M_i)\) is \(K\)-Zariski dense in \(V\). By (iv), \(X(K) = \bigcap_i X_i(K) \neq \emptyset\).
\qedhere
\end{itemize}
\end{proof}

\begin{definition} \label{PTCI}
The \(\cL\)-field \(K\) is said to be \emph{\(I\)-pseudo \(T\)-closed} (\(\SPTC\)) if the statements of \cref{equiv SPTC} hold.
\end{definition}

Conditions (i) and (ii) have a similar flavor to the interpolative fusion of \cite{KruTraChi}, although the main difference here is that \(K\) is not required to be a model of the \(T_i\) but only of \(T_{i,\forall}\).

\begin{example} The following are examples of \(\SPTC\) fields for different choices of \(T_i\) and \(T\):
\begin{itemize}
    \item By \cite[Theorem\ 1.7]{Pre-PRC}, $n$-\(\PRC\) fields are $\SPTC$, for \(T_i = \RCF_{<_i}\) and \(T = \bigvee_i T_i\), as in \cref{Ex PTC}.
    \item By \cite[Theorem\ 2.17]{MonRid-PpC}, $n$-\(\PpC\) fields are $\SPTC$, for \(T_i = \pCF_{v_i}\) and \(T = \bigvee_i T_i\), as in \cref{Ex PTC}.
    \item  More generally, it follows from \cite[Theorem\ 4.9]{Sch-RegT} that, if \(K\) is \(\PRC\) and \((<_i)_{i\in I}\) are distinct orders on \(K\), then it is \(\SPTC\), for \(T = \RCF\) and \(T_i = \RCF_{<_i}\). Similarly, for \(\PpC\) fields with named \(p\)-adic valuations.
    \item It follows from \cite[Theorem\ 2]{Kol-PAC}, that a \(\PAC\) field with one valuation is \(\SPTC\), for \(T = \ACF\) and \(T_0 = \ACVF\).  \item In the present paper we generalize this result by showing that a \(\PAC\) field with \(n\) distinct valuations is \(\SPTC\), for \(T = \ACF\) and \((T_i)_{i<n}\) copies of \(\ACVF\). This is a consequence \cref{free ec}.
    \item By \cite[Theorem\ 4.1]{Joh-MultVal}, if \(K\) is real closed (respectively $p$-adically closed or algebraically closed), and the \((v_i)_{0 < i \leq n}\) are distinct valuations, then it is \(\SPTC\), for \(T_0 = \RCF_{<_0}\) (respectively \(\pCF_{v_0}\) or \(\ACVF\)), \(T_i = \ACVF\), for \(0<i<n\) and \(T = \bigvee_i T_i\).
    \item As a consequence of \cref{free ec}, we also generalize this result by showing that if \(I = \{\Th(K_i)\mid i< n\}\) where \(K_i\) is either real closed or a characteristic zero Henselian valued field --- in an adequate language to eliminate quantifiers --- and \(T = \bigvee_i T_i\), then every \(\PTC\) field where the \(T_i\) induce distinct topologies is \(\SPTC\). Note that if all \(T_i\) but \(T_0\) are equal to \(\ACVF\), then any model of \(T_0\) is \(\PTC\).
\end{itemize}
\end{example}

\begin{lemma}
\label{FA Ti:reg}
Fix an \(i\in I\) and let \(K, M \models T_{i, \forall}\), \(F\leq M\) be
regular with \(\acl_i(F)\cap M\subseteq F\) and \(f : F\to K\) be an
\(\cL_i\)-embedding. Then \(K \tensor_F M\) can be made into a model of \(T_{i,
\forall}\), extending the \(\cL_i\)-structure on both \(K\) and \(M\).
\end{lemma}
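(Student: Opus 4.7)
The plan is to make \(K \otimes_F M\) into a model of \(T_{i,\forall}\) by embedding it as a ring into a sufficiently saturated \(\mathfrak{N} \models T_i\), in such a way that the two factor maps become \(\cL_i\)-embeddings on \(K\) and on \(M\) respectively, identified along \(F\) via \(f\). Since \(F \leq M\) is regular, \(K \otimes_F M\) is a domain. Using \(M \models T_{i,\forall}\), pick an \(\cL_i\)-embedding \(\iota : M \hookrightarrow \mathfrak{N}\), and identify \(F\) with \(\iota(F) \subseteq \iota(M) \leq \mathfrak{N}\) via \(\iota \circ f\). It suffices to construct an \(\cL_i\)-embedding \(g : K \hookrightarrow \mathfrak{N}\) extending this identification with \(g(K)\) field-theoretically linearly disjoint from \(\iota(M)\) over \(F\) in \(\mathfrak{N}\): the natural map \(K \otimes_F M \to g(K) \cdot \iota(M)\) sending \(k \otimes m \mapsto g(k)\iota(m)\) is then a ring isomorphism onto its image, and the inherited \(\cL_i\)-structure from \(\mathfrak{N}\) is the desired one.

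By saturation of \(\mathfrak{N}\), producing such a \(g\) reduces to the consistency of the partial type \(p(\bar{x})\) over \(\iota(M)\), in the variables \(\bar{x} = (x_k)_{k \in K}\), consisting of (a) the transported quantifier-free \(\cL_i\)-type \(\qftp_i(K/F)\), and (b) the family of inequations \(\sum_j \iota(m_j) \, x_{k_j} \neq 0\) for each \(F\)-linearly independent \((k_1,\dots,k_r) \in K^r\) and each nonzero \((m_1,\dots,m_r) \in M^r\), which encode the linear disjointness requirement. Part (a) is on its own consistent, because \(K \models T_{i,\forall}\) embeds into some model of \(T_i\) over \(F\).

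The main obstacle, and the technical heart of the argument, is the \emph{joint} consistency of (a) and (b), which is essentially a disjoint amalgamation statement for \(K\) and \(M\) over \(F\) in \(T_i\). By compactness, a failure yields a quantifier-free \(\cL_i\)-formula \(\phi(\bar{x})\) over \(F\) realized in \(K\), together with an \(F\)-linearly independent \((k_1,\dots,k_r)\) and a nonzero \((m_1,\dots,m_r) \in M^r\), such that every realization of \(\phi\) in \(\mathfrak{N}\) satisfies \(\sum_j \iota(m_j)\, x_{k_j} = 0\). The strategy is to exploit quantifier elimination in \(T_i\) together with both hypotheses: the regularity of \(F \leq M\) (so that \(\iota(M) \otimes_F N\) is a domain for every field extension \(N/F\), making any \(F(\bar{\xi})\) linearly disjoint from \(\iota(M)\) over \(F\) in any common overfield) and \(\acl_i(F) \cap M \subseteq F\) (ruling out the hidden model-theoretic algebraicities which could otherwise obstruct the amalgamation). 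Combined, they allow one to realize \(\phi\) in \(\mathfrak{N}\) by a tuple \(\bar{\xi}\) whose quantifier-free \(\cL_i\)-type over \(F\) matches that of the original tuple from \(K\) and which is furthermore field-theoretically algebraically independent from \(\iota(M)\) over \(F\). For such a \(\bar{\xi}\) the coordinates \(\xi_{k_j}\) remain \(F\)-linearly independent, so the forced relation \(\sum_j \iota(m_j)\, \xi_{k_j} = 0\) contradicts the linear disjointness of \(F(\bar{\xi})\) and \(\iota(M)\) over \(F\), closing the argument. Realizing \(p\) then yields \(g\), and the proof concludes.
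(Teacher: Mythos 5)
Your proof is correct and follows essentially the same strategy as the paper's: embed one of the two sides into a saturated model of $T_i$, realize the quantifier-free $\cL_i(F)$-type of the other side algebraically independently from the first (the step where $\acl_i(F)\cap M\subseteq F$ is used), upgrade algebraic independence to linear disjointness via regularity of $F\leq M$, and endow $K\otimes_F M$ with the $\cL_i$-structure inherited from the ambient model --- you embed $M$ first and realize $K$'s type, while the paper embeds $K$ in $N$ and then moves $g:M\to N$, but these are symmetric variants of the same argument. One small caveat: your parenthetical asserting that regularity of $F\leq M$ by itself makes \emph{any} $F(\bar\xi)$ linearly disjoint from $\iota(M)$ in a common overfield is false (take $\bar\xi\in\iota(M)\sminus F$); regularity only promotes algebraic independence over $F$ to linear disjointness, which is exactly how you later use it, so this is a misphrased aside rather than a gap.
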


\begin{proof}
By quantifier elimination, we can extend \(f\) to some \(g : M \to N \models T_i\). Since \(\acl_i(F)\cap M\subseteq F\), any \(\cL_i(F)\) definable set containing a tuple in \(M\setminus F\) is infinite and thus, in (a sufficiently saturated elementary extension of \(N\)), it contains a realization in \(N\setminus K\). By compactness, we may assume that \(K \aindep_F g(M)\). Since \(F\leq M\) is regular, we have \(K \lindep_F M\). Then \(K\tensor_F M \isom K g(M) \models T_{i, \forall}\).
\end{proof}

\begin{corollary}\label{better SPTC:reg}
Let \(K, M\models T_I\), \(F\leq M\) be regular and totally \(T\), with \(\acl_i(F)\cap M\subseteq F\), for all \(i\in I\), and \(f : F \to K\) be an \(\cL_I\)-embedding. Then \(K\leq K\tensor_F M\) is a regular totally \(T\) extension that can be made into a model of \(T_I\) extending the \(\cL_I\)-structure on \(K\) and \(M\).
\end{corollary}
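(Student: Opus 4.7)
This corollary consolidates the two preceding lemmas: Lemma~\ref{better PTC} supplies the ``regular totally \(T\)'' portion of the conclusion, while Lemma~\ref{FA Ti:reg}, applied separately for each \(i\in I\), supplies the \(\cL_i\)-expansions that together make up the desired \(\cL_I\)-structure. The proof amounts to verifying that these pieces fit together coherently.

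First, I would momentarily discard the \(\cL_I\)-structures and view \(f:F\to K\) as an \(\cL\)-embedding of \(\cL\)-fields, as is implicit from the convention of the section (each object considered is simultaneously an \(\cL\)-field and a model of \(T_I\)). Applying Lemma~\ref{better PTC} to \(F\leq M\) with this \(f\) then yields directly that \(K\leq K\tensor_F M\) is a regular totally \(T\) extension, which is the first half of the conclusion.

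Next, fixing an arbitrary \(i\in I\), I observe that the data \((K,M,F,f)\) meets the hypotheses of Lemma~\ref{FA Ti:reg}: since \(K,M\models T_I\) they are in particular models of \(T_{i,\forall}\); the assumption \(\acl_i(F)\cap M\subseteq F\) is built into the statement; and the restriction of \(f\) to the \(\cL_i\)-vocabulary is an \(\cL_i\)-embedding. The lemma then endows \(K\tensor_F M\) with an \(\cL_i\)-structure that is a model of \(T_{i,\forall}\) and whose restrictions to \(K\) and to \(M\) coincide with the original \(\cL_i\)-structures there.

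The final step is to amalgamate the per-\(i\) expansions into a single \(\cL_I\)-structure. Because the \(\cL_i\) are, by the convention stated at the start of the section, disjoint relational expansions of \(\Lrg\), and because the underlying ring \(K\tensor_F M\) is common to all of them, the family produced above is automatically consistent and yields an \(\cL_I\)-structure on \(K\tensor_F M\) that extends the \(\cL_I\)-structures of \(K\) and of \(M\) and satisfies each \(T_{i,\forall}\), hence \(T_I\). There is no substantive obstacle beyond this bookkeeping: the corollary is essentially the ``disjoint product'' of Lemma~\ref{better PTC} with the family of instances of Lemma~\ref{FA Ti:reg}, and the only point worth flagging is the disjointness of the \(\cL_i\) relational vocabularies, which is precisely what makes the amalgamation step conflict-free.
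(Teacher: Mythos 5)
Your proof is correct and follows essentially the same route as the paper, whose own proof is the one-liner ``This follows from \cref{better PTC,FA Ti:reg}.'' You have correctly identified the two ingredients and the one genuinely non-trivial point of the bookkeeping, namely that the \(\cL_i\) are pairwise disjoint relational expansions of \(\Lrg\) sitting over the same tensor-product field, so the per-\(i\) expansions from \cref{FA Ti:reg} merge into a single \(\cL_I\)-structure without conflict.
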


\begin{proof}
This follows from \cref{better PTC,FA Ti:reg}.
\end{proof}

We will also need the following orthogonal case of free amalgamation:

\begin{lemma}\label{FA Ti:alg}
Fix an \(i\in I\) and let \(K, M \models T_{i, \forall}\), \(F\leq M\) be algebraic and \(f : F\to K\) be an \(\cL_i\)-embedding with \(M\tensor_F K\) integral. Then \(K \tensor_F M\) can be made into a model of \(T_{i, \forall}\), extending the \(\cL_i\)-structure on both \(K\) and \(M\).
\end{lemma}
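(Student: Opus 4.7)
The plan is to adapt the proof of \cref{FA Ti:reg}, replacing the hypothesis on algebraic closures with the integrality of \(M \tensor_F K\). The key preliminary observation I would start from is that, because \(M/F\) is algebraic and \(M \tensor_F K\) is an integral domain, \(M \tensor_F K\) is in fact a field: write \(M\) as the directed union of its finite algebraic subextensions \(M_\alpha/F\); each \(M_\alpha \tensor_F K\) is a sub-\(K\)-algebra of \(M \tensor_F K\), hence a finite-dimensional \(K\)-algebra without zero divisors, hence a field; and the directed colimit of fields along injective \(K\)-algebra maps is again a field.

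Granted this, the construction is as follows. Since \(K \models T_{i,\forall}\), fix an \(\cL_i\)-embedding \(K \hookrightarrow N\) into a sufficiently saturated \(N \models T_i\). Since \(M \models T_{i,\forall}\), the \(\cL_i\)-quantifier-free type of \(M\) over \(F\) (computed in any model of \(T_i\) into which \(M\) embeds) is consistent with \(T_i\) over \(f(F) \subseteq K \subseteq N\); so by quantifier elimination in \(T_i\) and saturation of \(N\) this type is realized in \(N\), which produces an \(\cL_i\)-embedding \(\sigma : M \to N\) extending \(f\) (viewed as \(F \hookrightarrow K \hookrightarrow N\)). The universal property of the tensor product then yields a ring homomorphism \(\phi : K \tensor_F M \to N\) sending \(k \tensor m\) to \(k \cdot \sigma(m)\), whose image is the compositum \(K \cdot \sigma(M) \subseteq N\); since the source \(K \tensor_F M\) is a field, \(\phi\) is automatically injective.

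We can then transfer the \(\cL_i\)-structure from \(K \cdot \sigma(M) \subseteq N\) back to \(K \tensor_F M\) via \(\phi\). This exhibits \(K \tensor_F M\) as an \(\cL_i\)-substructure of \(N\), hence as a model of \(T_{i,\forall}\). By construction, the inclusion \(K \hookrightarrow K \tensor_F M\) and the embedding \(\sigma : M \to K \tensor_F M\) are \(\cL_i\)-embeddings, so this \(\cL_i\)-structure extends the given ones on \(K\) and on \(M\), as required. The only real obstacle is the algebraic upgrade from "\(M \tensor_F K\) integral" to "\(M \tensor_F K\) a field"; once that is in hand, the remainder is the same quantifier-elimination-plus-saturation realization step used in \cref{FA Ti:reg}, now without any need to separately arrange linear disjointness since integrality of the tensor product takes care of it automatically.
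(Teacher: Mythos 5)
Your proof is correct and follows essentially the same route as the paper: extend \(f\) to an \(\cL_i\)-embedding \(M \to N \models T_i\) over \(f(F) \subseteq K\) via quantifier elimination, and use integrality of \(M\tensor_F K\) (together with algebraicity of \(M/F\)) to see that the natural map \(K\tensor_F M \to Kg(M) \subseteq N\) is an isomorphism. You have merely made explicit the algebraic fact the paper relies on implicitly — that an integral domain integral over a field is itself a field, so the map out of \(K\tensor_F M\) is automatically injective.
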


\begin{proof}
By quantifier elimination we can extend \(f\) to some \(g : M \to N \models T_i\). Since \(F\leq M\) is algebraic and \(K \tensor_F M\) is integral, we have \(K\tensor_F M \isom K g(M) \models T_{i,\forall}\).
\end{proof}

Let us now prove that, in the cases which we will later consider, \(\SPTC\) is elementary:

\begin{proposition}\label{SPTC elem}
Assume that, for all \(i\in I\):
\begin{itemize}
\item[\textbf{(A$_i$)}] for every \(F\models T_{i,\forall}\), \(\acl_i(F) \subseteq \alg{F}\);
\item[\textbf{(Z$_i$)}] maximal Zariski dimension is definable in family: for
every \(\cL_i\) definable sets \(X\subseteq \Aa^{n+m}\), the set \(\{s
\in\Aa^{n}\mid \dim(X_s) = m\}\) is \(\cL_i\)-definable, where $X_s$ is the
fiber of $X$ in $s$.
\end{itemize}
Then the class of \(\SPTC\) fields is elementary --- in fact, inductive --- in \(\cL\cup\cL_I\).
\end{proposition}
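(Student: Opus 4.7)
The plan is to axiomatize $\SPTC$ by a $\Pi_2$-theory in $\cL\cup\cL_I$, which simultaneously yields elementarity and inductivity. I target the characterization of \cref{equiv SPTC}(iv). Observe first that by $(A_i)$, $\dcl_i(F)\subseteq\acl_i(F)\subseteq\alg{F}$ for every $F\models T_{i,\forall}$, so the first half of (iv) is automatic for any $K\models T_I$ and only the density clause needs to be axiomatized.

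For each finite $I_0\subseteq I$, each $\Lrg$-formula $\psi(\bar x,\bar a)$, each tuple $(\phi_i(\bar x,\bar b_i))_{i\in I_0}$ of $\cL_i$-formulas, and each quantifier-free $\cL$-formula $\theta(\bar a)$ satisfying the \emph{side condition}
\[T\vdash\forall\bar c\,\bigl(\theta(\bar c)\to\exists\bar y\,\bar y\in\sm{(V_{\bar c})}\bigr),\qquad\text{where }V_{\bar c}:=\{\bar x:\psi(\bar x,\bar c)\},\]
I propose to introduce the axiom
\[\forall\bar a\,\forall(\bar b_i)_{i\in I_0}\,\Bigl(\theta(\bar a)\wedge\theta_{gi}(\bar a)\wedge\bigwedge_{i\in I_0}D_i(\bar a,\bar b_i)\to\exists\bar x\,\bigl(\psi(\bar x,\bar a)\wedge\bigwedge_{i\in I_0}\phi_i(\bar x,\bar b_i)\bigr)\Bigr).\]
Here $\theta_{gi}(\bar a)$ denotes the standard first-order formula expressing geometric integrality of $V_{\bar a}$ (a classical consequence of elimination theory), while $D_i(\bar a,\bar b_i)$ is an $\cL_i$-formula expressing $\dim\phi_i(\cdot,\bar b_i)=\dim V_{\bar a}$, which is well-defined and first-order by hypothesis $(Z_i)$ combined with the definability of Zariski dimension. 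Each axiom is $\Pi_2$, so the theory $T^\star$ they generate together with $T_I$ is preserved under unions of chains, hence inductive.

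To verify the axiomatization, suppose $K\models\SPTC$ satisfies the hypothesis of such an axiom for some $\bar a,\bar b_i$: then $\theta_{gi}(\bar a)$ yields geometric integrality of $V_{\bar a}$; the quantifier-free $\theta(\bar a)$ transfers upward from $K$ to any $M\models T_K$, so the side condition produces a smooth $M$-point and $V_{\bar a}$ is totally $T$; meanwhile the $X_i:=\phi_i(\cdot,\bar b_i)$ are Zariski dense in $V_{\bar a}$ by $D_i$, so \cref{equiv SPTC}(iv) delivers the required $\bar x\in K$. Conversely, if $K\models T^\star$ and $V_{\bar a}$ is a geometrically integral totally $T$ variety over $K$ with Zariski dense $\cL_i(K)$-definable $X_i=\phi_i(\cdot,\bar b_i)\subseteq V_{\bar a}$, then the theory $T\cup\mathrm{diag}_\cL(K)\cup\{\forall\bar x\,\bar x\notin\sm{(V_{\bar a})}\}$ is inconsistent, and compactness extracts a quantifier-free $\theta(\bar a)\in\mathrm{diag}_\cL(K)$ verifying the side condition; the matching axiom then applies and produces a point in $\bigcap_iX_i(K)$.

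The main obstacle is the apparently second-order character of ``$V$ is totally $T$'', which explicitly quantifies over models of $T$ containing $K$. The key insight is that compactness always absorbs this into a first-order schema: any specific totally $T$ variety over $K$ is witnessed by a single quantifier-free $\cL$-diagram formula, and the family of such formulas is exactly what parametrizes the axioms. A secondary verification, easy to overlook, is that the auxiliary conditions $\theta_{gi}$ and $D_i$ are genuinely first-order in the coefficients — the content of classical elimination theory and of hypothesis $(Z_i)$, respectively, and precisely the reason these two hypotheses appear in the statement.
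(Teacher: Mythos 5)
Your approach is genuinely different from the paper's. The paper shows the class is closed under existentially closed substructures and under ultraproducts, then deduces elementarity (and separately inductivity) abstractly; you instead write down an explicit $\Pi_2$ scheme, absorbing the \emph{a priori} second-order quantifier "for all $M\models T_K$" in "totally $T$" by compactness on the quantifier-free $\cL$-diagram of $K$. This is arguably cleaner, correctly isolates the role of \textbf{(A$_i$)} (to discharge the clause $\dcl_i(K)\subseteq\alg{K}$) and of \textbf{(Z$_i$)} (to make density a first-order condition on parameters), and delivers elementarity and inductivity in one stroke.

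There is, however, a genuine gap in your formula $D_i$. You take $D_i(\bar a,\bar b_i)$ to say $\dim\phi_i(\cdot,\bar b_i)=\dim V_{\bar a}$, but that does \emph{not} express that $\phi_i(\cdot,\bar b_i)\cap V_{\bar a}$ is Zariski dense in $V_{\bar a}$, which is what \cref{equiv SPTC}(iv) requires. Take $V_{\bar a}$ the line $\{y=0\}$ in $\Aa^2$ and $\phi_i$ defining $\{y=1\}$: both have dimension one, so $D_i$ as written holds, $\theta_{gi}$ holds, and the side condition is met with $\theta=\top$; yet $\psi\wedge\phi_i$ has no solution whatsoever, so your axiom is false in every $\SPTC$ field and the forward half of your verification breaks. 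The repair is straightforward: replace $\phi_i$ by $\phi_i\wedge\psi$ (so that $\phi_i\vdash\psi$) and let $D_i$ assert $\dim\big((\phi_i\wedge\psi)(\cdot,\bar a,\bar b_i)\big)=\dim V_{\bar a}$. Two further points should be made explicit. First, \textbf{(Z$_i$)} only gives definability of the maximal fibre dimension $\dim X_s=m$; to define $\dim X_s=d$ for $d<m$ one must, for each $d$-element coordinate projection $\pi_J:\Aa^m\to\Aa^d$, apply \textbf{(Z$_i$)} to $\pi_J(X)$ and take Boolean combinations --- the same device the paper uses silently via "any coordinate projection dominant from $V$ is dominant from $X_i$". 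Second, the compactness step produces a $\theta$ involving finitely many extra constants from $K$ beyond $\bar a$, so the axiom scheme should let $\theta$ carry its own parameter tuple containing $\bar a$. With these repairs your argument is correct.
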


\begin{remark}\label{better SPTC elem}
Instead of assuming Hypothesis \textbf{(A$_i$)}, we can allow the following generalization. For all \(i\in I\), let \(T_i'\supseteq T_{i,\forall}\) be such that for every \(F\models T_i'\), \(\dcl_i(F)\subseteq \alg{F}\). Then the class of \(\SPTC\) models of \(T_I' := \bigcup_i T_i'\) is elementary.

For example, one could take \(T_i'\) to be the class of \(\dcl_i\)-closed models of \(T_{i,\forall}\).
\end{remark}

\begin{proof}[Proof of \cref{SPTC elem}]
Let us first show that if \(F\leq K \models T_{I}\) is \(\cL_I\)-existentially closed and \(K\) is \(\SPTC\), then so is \(F\). Let \(F\leq M\models T_{I}\) be regular and totally \(T\). By Hypothesis \textbf{(A$_i$)}, for all \(i\), \(\acl_i(F)\cap M\subseteq \alg{F}\cap M = F\) and hence, by \cref{better SPTC:reg}, \(K\leq K\tensor_F M \models T_I\) is regular and totally \(T\) and the \(\cL_I\)-structure on \(K\tensor_F M\) extends both that of \(M\) and \(K\). Since \(K\) is \(\SPTC\), it follows that this extension is \(\cL_I\)-existentially closed and hence so is \(F\leq M\).

Now, we let $J$ be a set of indices and let \(K_j\) be \(\SPTC\), for all \(j\in J\), and \(\fU\) be an ultrafilter on \(J\). We wish to show that \(K := \prod_{j\to \fU} K_j\) is \(\SPTC\). Let \(V\) be a smooth geometrically integral totally \(T\) variety over \(K\) such that for all \(M\models T_K\), \(V(M)\neq\emptyset\). By definability of irreducibility in \(\ACF\), we can find smooth geometrically irreducible varieties \(V_j\) over \(K_j\) such that \(V = \prod_{j\to \fU} V_j\). Let \(Y := \{j\mid\) for all \(M_j\models T_{K_j}\),  \(V_j(M_j)\neq\emptyset\}\). If \(Y\nin\fU\), then for every \(j\nin Y\), let \(M_j\models T_{K_j}\) such that \(V_j(M_j) = \emptyset\). Then \(M:= \prod_{j\to\fU}M_j \models T_K\), but \(V(M) = \emptyset\), a contradiction. So, we may assume that \(Y = J\). For \(i\in I_0\subseteq I\) finite, let now \(M_i\models T_{i,K}\) and \(X_i(M_i) \subseteq V\) \(\cL_i(K)\)-definable and Zariski dense --- in other words \(\dim(X_i) = \dim(V)\) or equivalently, any coordinate projection which is dominant from \(V\) is dominant from \(X_i\). By Hypothesis \textbf{(Z$_i$)}, we find \(M_{ji}\models T_{i,K_j}\) and \(X_{ji}(M_{ji})\subseteq V_j\) \(\cL_i(K)\)-definable and Zariski dense with \(\prod_{j\to \fU} X_{ji} = X_i\). Since \(K_j\) is \(\SPTC\), \(\bigcap_i X_{ji}(K_j) \neq \emptyset\). Hence, \(\bigcap_i X_{i}(K) \neq \emptyset\).

It follows that the class \(\SPTC\) is elementary in \(\cL\cup\cL_I\). Note that, since it is in fact closed under existentially closed substructures, it is, in fact, inductive this can be seen, \emph{e.g.} by working in the enrichment of \(\cL\cup \cL_I\) by all existential formulas which yields a class closed under all substructures, \emph{i.e.} a universal class.
\end{proof}

\section{V-Topological theories}\label{VTopo}

We now fix an \(i \in I\) and \(M_i\models T_i\). We now wish to consider that \(\cL_i\)-definable sets are essentially open. This is closely related to the notion of "t-theory" in \cite[Chapter\ III]{vdD-PhD}.

\begin{hyp}\label{H2i}
In this section, we will assume that:

\begin{itemize}
\item[\textbf{(H$_{i}$)}] 
\(T_i\) admits a definable (non-discrete) Henselian \(V\)-topology \(\tau_i\) and, for every
 \(M_i\models T_i\), any \(\cL_i(M_i)\)-definable set \(X_i\) has non-empty
 \(\tau_i\)-interior in (the \(M_i\)-points of) its Zariski closure.
\end{itemize}
\end{hyp}

\begin{example}
\begin{enumerate}
\item Hypothesis \textbf{(H$_{i}$)} holds if \(T_i\) is the theory of real
closed fields. Indeed, the order topology is a definable Henselian
\(V\)-topology and any definable set is a disjoint union of sets of the form
\(V\cap U\) where \(V\) is Zariski closed and \(U\) is open.
\item Similarly, Hypothesis \textbf{(H$_{i}$)} also holds if \(T_i\) is a theory of
(\(\RV_1\)-enriched) Henselian valued fields which eliminates quantifiers
relatively to \(\RV_1\) --- or, more generally, relatively to \(\bigcup_n\RV_n\) in
characteristic zero.
\end{enumerate}
\end{example}

\begin{remark}\label{rem elem Hi}
\begin{enumerate}
\item Whenever \textbf{(H$_{i}$)} holds, Hypothesis \textbf{(Z$_i$)} of
\cref{SPTC elem} also holds since a definable set is Zariski dense in \(\Aa^n\)
if and only if it has non-empty \(\tau_i\)-interior.
\item As we will see in \cref{acl Ti}, Hypothesis \textbf{(A$_i$)} of \cref{SPTC
elem} also holds provided \(\dcl_i(\emptyset) \subseteq \alg{F}\) where \(F\) is
the field generated by the constants --- constants that we may add for this
exact purpose.
\end{enumerate}
\end{remark}

As it turns out, we can almost control the parameters in Hypothesis \textbf{(H$_i$)}:

\begin{lemma}\label{acl Ti}
Let \(F\leq M_i\), the following are equivalent:
\begin{enumerate}[(i)]
\item \(\dcl_i(\emptyset)\subseteq\alg{F}\);
\item \(\acl_i(F)\subseteq \alg{F}\);
\item any \(\cL_i(F)\)-definable set \(X\) has non-empty \(\tau_i\)-interior in its \(F\)-Zariski closure.
\end{enumerate}
\end{lemma}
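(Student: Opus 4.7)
The plan is to establish the implications in the cycle (ii) $\Rightarrow$ (iii) $\Rightarrow$ (ii), the trivial (ii) $\Rightarrow$ (i), and then the main implication (i) $\Rightarrow$ (ii). The implication (ii) $\Rightarrow$ (i) is immediate from the chain $\dcl_i(\emptyset) \subseteq \acl_i(F) \subseteq \alg{F}$.

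For (ii) $\Rightarrow$ (iii), I would use the remark preceding the statement, which identifies (ii) with the condition $\dcl_i(F) \subseteq \alg{F}$; this is exactly what \cref{geom Z dense} requires. Given $X$ an $\cL_i(F)$-definable set with $F$-Zariski closure $W$, decompose $W = \bigcup_j W_j$ into its $F$-irreducible components. Minimality of $W$, together with the irreducibility of the $W_j$, forces each $X \cap W_j$ to be $F$-Zariski dense in $W_j$. \cref{geom Z dense} then upgrades $F$-Zariski density to full Zariski density, so $W$ is in fact the full Zariski closure of $X$, and hypothesis $\textbf{(H}_i\textbf{)}$ yields the non-empty $\tau_i$-interior.

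For (iii) $\Rightarrow$ (ii), given $a \in \acl_i(F)$, I would pick a minimal $\cL_i(F)$-definable finite $X$ containing $a$, with $F$-Zariski closure $W$. Since the $V$-topology is non-discrete and Hausdorff, a finite set can have non-empty $\tau_i$-interior in $W$ only when $W$ itself is finite; then $W \subseteq \alg{F}$ and $a \in \alg{F}$. The same argument applied to a transcendental $a \in \dcl_i(\emptyset) \setminus \alg{F}$ would produce a singleton in $\Aa^1$ with empty interior, contradicting (iii), which gives (iii) $\Rightarrow$ (i) directly.

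The main implication (i) $\Rightarrow$ (ii) is where the real work lies. For $a \in \acl_i(F)$, pick a minimal $\cL_i(F)$-definable finite $X = \phi(M_i, \bar{f})$ with $a \in X$, $\bar{f} \in F^k$, and $N := |X|$. Let $\Gamma := \{(\bar{y}, z) \in M_i^{k+1} : \phi(z, \bar{y}) \wedge |\phi(M_i, \bar{y})| \leq N\}$, an $\cL_i(\emptyset)$-definable set whose fibers over the $\bar{y}$-projection have size at most $N$. Its Zariski closure $V \subseteq \Aa^{k+1}$ is $\aut(M_i)$-invariant, hence defined over the field generated by $\dcl_i(\emptyset)$, which is contained in $\alg{F}$ by (i). Applying $\textbf{(H}_i\textbf{)}$ to $\Gamma$, it has non-empty $\tau_i$-interior in $V$; since this interior has the same dimension as (the relevant component of) $V$ and $\Gamma$ has finite fibers, a dimension count forces $\dim V = \dim \pi(V)$ and the projection $\pi : V \to \Aa^k$ to be generically finite. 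For $\bar{f}$ in the generic finite-fiber locus, $V_{\bar{f}}$ is a finite $\alg{F}$-subvariety of $\Aa^1$ containing $X$, giving $a \in \alg{F}$. The main obstacle is the non-generic case, where $V_{\bar{f}}$ could jump and be all of $\Aa^1$; I expect to address this by Noetherian induction on $\dim\pi(V)$, restricting the definable family to the proper $\alg{F}$-subvariety of $\pi(V)$ containing $\bar{f}$ and iterating the construction on the restricted family.
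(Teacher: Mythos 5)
Your overall plan matches the paper's: the cycle (ii) $\Rightarrow$ (iii) $\Rightarrow$ (ii), the trivial (ii) $\Rightarrow$ (i), and a Noetherian induction for (i) $\Rightarrow$ (ii). Your (ii) $\Rightarrow$ (iii) via \cref{geom Z dense} and \textbf{(H$_i$)} is what the paper does (the decomposition into $F$-irreducible components is a good clarification, since \cref{geom Z dense} is stated for irreducible $V$). Your formulation of (i) $\Rightarrow$ (ii) in terms of the family $\Gamma$, its Zariski closure $V$ over $\dcl_i(\emptyset)$, generic finiteness of $\pi$, and Noetherian descent on the base, is essentially the same induction the paper carries out; the paper makes it cleaner by proving the uniform statement that for every variety $V$ over $E := \dcl_i(\emptyset)$ and every $\cL_i$-definable $X \subseteq V \times \Aa^1$ with finite fibers one has $X_a(M_i) \subseteq \alg{E(a)}$, and extracts an explicit polynomial $P \in E[x,y]$ vanishing on $X$ but not on $V \times \Aa^1$ to run the induction — but this is the same content as your "generic finite-fiber locus plus restriction to the bad locus."

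There is, however, a genuine gap in your (iii) $\Rightarrow$ (ii). The claim "since the $V$-topology is non-discrete and Hausdorff, a finite set can have non-empty $\tau_i$-interior in $W$ only when $W$ itself is finite" is false as stated. An isolated point of $W(M_i)$ on a positive-dimensional $W$ (for instance the node of $y^2 = x^2(x-1)$ over a real closed field, which is isolated among the $M_i$-points) is a finite subset with non-empty $\tau_i$-interior in $W$. Non-discreteness of $\tau_i$ on $M_i$ does not propagate to $W(M_i)$ at singular points. The correct argument, which the paper gives, needs two more ingredients you left out: first, by minimality and definability of the topology, $X$ equals its own $\tau_i$-interior, hence $X$ is open in $W$; second, $X$ is $F$-Zariski dense in $W$ by construction, so it cannot be contained in the proper $F$-subvariety $W \setminus \sm{W}$, and therefore $X$ meets $\sm{W}$. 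Only then can one invoke the Henselian inverse function theorem (packaged as \cref{open Z dense}) at a smooth point to conclude that an open set is Zariski dense, forcing $\dim W = 0$. Without passing through the smooth locus, the step does not go through.
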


\begin{proof} It is clear that (ii) implies (i).
\begin{itemize}
\item[(i)\imp (ii)] Let \(E = \dcl_i(\emptyset)\). We prove, by induction on
\(\dim(V)\), that for any variety \(V\) over \(E\) and \(X\subseteq V\times
\Aa^1\) \(\cL_i\)-definable with \(X_a := \{y\in \Aa^1 \mid (a,y)\in X\}\)
finite for every \(a\in X(M_i)\), we have \(X_a(M_i) \subseteq \alg{E(a)}\). We
may assume that \(V\) is irreducible. By finiteness of \(X_a\), \(X\) is not
\(\tau_i\)-open in \(V\times \Aa^1\) and hence is not Zariski dense. So there
exists \(P = \sum_j p_j(x) y^j \in E[x,y]\) with \(P(V\times \Aa^1)\neq 0\) but
\(P(X)=0\). Let \(V_j\) be the zero locus of $p_j$ in \(V\) and \(J := \{j\mid
V_j\subset V\} \neq \emptyset\). By induction, for any \(j\in J\) and \(a\in
V_j\), \(X_a(M_i)\subseteq \alg{E(a)}\) and by construction, for any \(a\in
V\sminus \bigcup_j V_j\), \(P(a,y) \neq 0\) and \(P(a,X_a) = 0\).
\item[(ii)\imp(iii)] By \cref{geom Z dense}, \(X(M_i)\) is Zariski
dense in its \(F\)-Zariski closure \(V\). It now follows from \textbf{(H$_i$)}
that \(X\) has non-empty interior in \(V\).

\item[(iii)\imp(ii)] Let \(X\subseteq\Aa^1\) be a minimal finite \(\cL_i(F)\)-definable set. Let \(V\) its \(F\)-Zariski closure. Then either \(\dim(V) = 0\), in which case \(X\subseteq \alg{F}\) as required, or \(\dim(V) = 1\) and \(V = \Aa^1\). By (iii), the finite set \(X\) is \(\tau_i\)-open in \(\Aa_1\), contradicting its non-discreteness.\qedhere
\end{itemize}
\end{proof}

\begin{lemma}\label{induced V-top} Let \(F\leq M_i\) be such that, for any
 non-empty open \(\cL_i(F)\)-definable subset of \(\Aa^1(M_i)\) has an
 \(F\)-point. Then the topology generated by the \(X(F)\), where \(X\subseteq
 M_i\) is open and \(\cL_i(F)\)-definable, is a \(V\)-topology. If, moreover,
 \(\sep{F}\cap M_i\subseteq F\), then it is Henselian.
\end{lemma}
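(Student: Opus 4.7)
My plan is to verify both claims by transferring the relevant properties of \(\tau_i\) on \(M_i\) down to \(F\), using the density hypothesis as the crucial bridge. Write \(\tau_F\) for the topology on \(F\) defined in the lemma, i.e.\ the one generated by the sets \(X(F) = X \cap F\) with \(X\subseteq M_i\) open and \(\cL_i(F)\)-definable. I will verify the \(V\)-topology axiom directly: suppose \(S \subseteq F\) has \(S^{-1}\) \(\tau_F\)-bounded away from \(0\), witnessed by some \(\cL_i(F)\)-definable \(\tau_i\)-open \(X \ni 0\) with \(S^{-1} \cap X(F) = \emptyset\). Since \(S^{-1} \subseteq F\), we actually have \(S^{-1} \cap X = \emptyset\) in \(M_i\), so the \(V\)-topology axiom for \(\tau_i\) shows that the \(\cL_i(F)\)-definable set \(Z := \{0\} \cup (M_i \sminus X)^{-1}\) is \(\tau_i\)-bounded and contains \(S\).

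Now fix an \(\cL_i(F)\)-definable \(\tau_i\)-open \(Y \ni 0\) and let \(U_0\) be the \(\cL_i\)-definable bounded open neighborhood of \(0\) provided by \cref{bounded cover}. Boundedness of \(Z\) and of \(U_0\) in \(\tau_i\) gives that
\[W_1 := \{d \in M_i : dZ \subseteq U_0\}\quad\text{and}\quad W_2 := \{c \in M_i : c U_0 \subseteq Y\}\]
are \(\tau_i\)-neighborhoods of \(0\), and both are \(\cL_i(F)\)-definable. By the definability of \(\tau_i\) from \textbf{(H$_{i}$)}, their \(\tau_i\)-interiors are \(\cL_i(F)\)-definable and \(\tau_i\)-open; since \(\tau_i\) is non-discrete these interiors strictly contain \(\{0\}\), so removing \(0\) yields non-empty \(\cL_i(F)\)-definable \(\tau_i\)-open sets, and the density hypothesis produces points \(d, c \in F^\times\) with \(dZ \subseteq U_0\) and \(c U_0 \subseteq Y\). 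Setting \(a := cd \in F^\times\), the chain \(aS \subseteq aZ = cdZ \subseteq cU_0 \subseteq Y\) shows \(aS \subseteq Y(F)\), so \(S\) is \(\tau_F\)-bounded. Analogous density arguments show that \(\tau_F\) is a non-discrete Hausdorff field topology (if \(\tau_F\) were discrete, some \(\cL_i(F)\)-definable \(\tau_i\)-open \(X \ni 0\) would satisfy \(X \cap F = \{0\}\), contradicting density applied to \(X \sminus \{0\}\)), completing the first claim.

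For the Henselian assertion, assume \(\sep{F} \cap M_i \subseteq F\), fix \(n \geq 1\), and let \(U_0\) be an \(\cL_i\)-definable open neighborhood of \(0\) witnessing Henselianity of \(\tau_i\) at level \(n\) (which exists by \textbf{(H$_{i}$)}); then \(U_0 \cap F\) is a \(\tau_F\)-neighborhood of \(0\). Given \(P \in X^{n+1} + X^n + (U_0 \cap F)[X]^{n-1}\), Henselianity of \(\tau_i\) produces a root \(\alpha \in M_i\); shrinking \(U_0\) if necessary, the Hensel factoring \(X^{n+1} + X^n = X^n(X+1)\) forces \(\alpha\) to be a simple root of \(P\) near \(-1\). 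As a simple root of \(P \in F[X]\), the element \(\alpha\) has separable minimal polynomial over \(F\), so \(\alpha \in \sep{F} \cap M_i \subseteq F\) by the extra hypothesis, and \(U_0 \cap F\) witnesses Henselianity of \(\tau_F\) at level \(n\).

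The main technical point, and the expected obstacle, is the density-definability bridge used in the first part: \(\tau_i\) being an \(\cL_i\)-definable \(V\)-topology on \(M_i\) ensures that the interiors of \(W_1\) and \(W_2\) are \(\cL_i(F)\)-definable \(\tau_i\)-open subsets of \(\Aa^1(M_i)\), which together with non-discreteness lets the density hypothesis deliver the scaling factors \(d, c \in F^\times\). Once these are obtained, both the \(V\)-topology axiom and Henselianity reduce to direct manipulations.
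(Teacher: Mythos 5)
Your proof is correct and takes essentially the same route as the paper's. Both arguments reduce the \(V\)-topology axiom on \(F\) to the density hypothesis: a definable set that is \(\tau_i\)-bounded in \(M_i\) has \(\tau_F\)-bounded trace on \(F\) because the scaling set (a definable \(\tau_i\)-neighborhood of \(0\)) has definable interior by \textbf{(H$_{i}$)}, whose nonzero part admits an \(F\)-point; your formulation via \(Z = \{0\}\cup(M_i\sminus X)^{-1}\) is exactly what the paper writes as \((M_i\sminus U)^{-1}\). The only divergence is that you factor the scaling through the bounded open \(U_0\) from \cref{bounded cover}, using two scalars \(c,d\), where the paper scales in a single step with \(\{x : xZ\subseteq Y\}\) (already \(\cL_i(F)\)-definable); the detour is harmless but unnecessary. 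For Henselianity, your argument coincides with the paper's brief remark: the root is simple hence separable over \(F\), so lands in \(\sep{F}\cap M_i\subseteq F\). One small point worth noting: you assert that a Henselianity-witnessing neighborhood \(U_0\) can be taken \(\cL_i\)-definable without parameters "by \textbf{(H$_{i}$)}", but \textbf{(H$_{i}$)} only gives a uniformly definable basis, not \(\emptyset\)-definable witnesses. In the theories the paper actually considers (real closed fields, \(\RV\)-enriched Henselian valued fields, etc.) a \(\emptyset\)-definable witness does exist, so this does not affect the result, and the paper's one-line proof is equally silent on the issue; but a fully self-contained argument should obtain an \(\cL_i(F)\)-definable witness by scaling a fixed definable bounded open into the true witness and applying the density hypothesis, parallel to the first half of the proof.
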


We will also denote this topology on \(F\) by \(\tau_i\) even if it might not be the induced topology, per se.

\begin{proof}
Let \(S\) be \(\cL_i(F)\)-definable and bounded (in \(M_i\)) and let \(U\) be \(\cL_i(F)\)-definable neighborhood of zero. Then \(\{x\in M_i\mid xS\subseteq U\}\) is a \(\cL_i(F)\)-definable neighborhood of \(0\). So, by hypothesis, its \(\tau_i\) interior contains a point distinct from \(0\); in other terms, \(S(F)\) is bounded in \(F\). So, if \(U\subseteq M_i\) is an \(\cL_i(F)\)-definable neighborhood of \(0\), \((M_i\sminus U)^{-1}\) is bounded in \(M_i\) and hence in \(F\), proving that the topology is a \(V\)-topology.

The second statement follows from the fact that Henselianity states the existence of roots to certain separable polynomials.
\end{proof}

\begin{lemma}\label{open Z dense}
Let \(F\leq M_i\models T_i\) be such that \(\sep{F}\cap M_i\subseteq F\) and any non-empty \(\tau_i\)-open \(\cL_i(F)\)-definable subset of \(\Aa^1(M_i)\) has an \(F\)-point. Let \(V\) be a geometrically irreducible variety over \(F\) and \(X(M_i)\subseteq V(M_i)\) be \(\tau_i\)-open and \(\cL_i(F)\)-definable. The following are equivalent:
\begin{enumerate}[(i)]
\item \(X(M_i)\) is Zariski dense in \(V\);
\item \(X(M_i)\cap \sm{V} \neq \emptyset\);
\item \(X(F)\cap \sm{V} \neq \emptyset\).
\end{enumerate}
\end{lemma}

\begin{proof}
Note that (i) obviously implies (ii), and so does (iii). Also, shrinking \(V\), we may assume (\cref{smoothpoint}) that it is affine smooth and that there exists an étale map \(f : V \to \Aa^n\) over \(F\). 
\begin{itemize}
\item[(ii)\imp(iii)] By  \cref{InvFuncThe}, \(f(X)\subseteq \Aa^n\) has non-empty interior, so there exists \(y\in f(X)\cap \Aa^n(F)\) and hence  \(x\in X(M_i)\) with \(f(x) = y\). Then \(x\in \sep{F(f(x))}\cap M_i = F\).

\item[(ii)\imp(i)]
By  \cref{InvFuncThe} we can find a non-empty \(\cL_i(M_i)\)-definable \(\tau_i\)-open \(U\subseteq X(M_i)\) such that \(\restr{f}{U}\) is a homeomorphism onto its open image. Since open subset of \(\Aa^n\) are Zariski dense, (i) follows.\qedhere
\end{itemize}
\end{proof}

\begin{lemma}\label{Z dense Ti}
Let \(F\leq M_i\), with \(\dcl_i(\emptyset)\subseteq\alg{F}\), let $V$ be a geometrically irreducible variety over $F$ and \(X\subseteq V\) be \(\cL_i(F)\)-definable. The following are equivalent:
\begin{enumerate}[(i)]
\item \(X(M_i)\) is Zariski dense in \(V\);
\item \(X(M_i)\) is \(F\)-Zariski dense in \(V\);
\item the \(\tau_i\)-interior of \(X(M_i)\) in \(V\) contains a smooth point.
\end{enumerate}
\end{lemma}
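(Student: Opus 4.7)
The plan is to derive (i)$\Leftrightarrow$(ii) directly from \cref{geom Z dense}, and to tie in (iii) by invoking \cref{open Z dense} with the base field enlarged from \(F\) to \(M_i\). This trick renders the two running hypotheses of \cref{open Z dense} --- namely \(\sep{F}\cap M_i\subseteq F\) and density of \(F\)-points in non-empty \(\tau_i\)-opens of \(\Aa^1(M_i)\) --- automatically true, which is crucial since neither is available for the \(F\) appearing in our statement. Throughout, I read \(V\) as an (implicitly geometrically) irreducible variety over \(F\), since this is what the mention of smooth points and \(F\)-Zariski closure requires.

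For (i)$\Leftrightarrow$(ii): the hypothesis \(\dcl_i(\emptyset)\subseteq\alg{F}\), combined with \cref{acl Ti} and the remark preceding \cref{equiv SPTC} (giving \(\acl_i(F)\subseteq\alg{\dcl_i(F)}\) and hence \(\dcl_i(F)\subseteq\alg{F}\)), places us exactly in the setting of \cref{geom Z dense} with \(K=F\), which delivers the equivalence.

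For (ii)$\Rightarrow$(iii): by the equivalence just obtained, \(X(M_i)\) is Zariski dense in \(V\), so \(V\) is its Zariski closure. Hypothesis \textbf{(H$_{i}$)} then yields a non-empty \(\tau_i\)-open \(U\subseteq X(M_i)\), which is \(\cL_i(M_i)\)-definable and open in \(V(M_i)\). Since geometric irreducibility of \(V\) is preserved under base change, \cref{open Z dense}(ii) applied with base field \(M_i\) in place of \(F\) gives \(U\cap\sm{V}\neq\emptyset\), so the \(\tau_i\)-interior of \(X(M_i)\) in \(V\) contains a smooth point.

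For (iii)$\Rightarrow$(i): if the \(\tau_i\)-interior \(U\) of \(X(M_i)\) contains a smooth point it is, in particular, non-empty; then \cref{open Z dense}(i), again applied with base field \(M_i\), shows that \(U\), and \emph{a fortiori} \(X(M_i)\), is Zariski dense in \(V\).

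The only genuinely delicate point is the deliberate base change of \cref{open Z dense} to \(M_i\): this is precisely what lets us bypass the separable-closedness and \(F\)-point-density hypotheses of that lemma, neither of which we may impose on the \(F\) of the present statement.
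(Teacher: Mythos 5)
Your (i)$\Leftrightarrow$(ii) via \cref{geom Z dense} is a clean alternative to the paper's route (the paper only notes (i)$\Rightarrow$(ii) trivially and closes the cycle through (iii)), and your verification of the hypothesis of \cref{geom Z dense} via \cref{acl Ti} and the remark on $\acl_i$ vs.\ $\dcl_i$ is correct. Your (iii)$\Rightarrow$(i) is also correct, though the base change to $M_i$ is unnecessary there: only the implication (ii)$\Rightarrow$(iii) of \cref{open Z dense} uses the two hypotheses on $F$, while (ii)$\Rightarrow$(i) does not, so the paper can and does invoke \cref{open Z dense} directly.

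The gap is in (ii)$\Rightarrow$(iii). You obtain, via \textbf{(H$_i$)}, a non-empty $\tau_i$-open $\cL_i(M_i)$-definable $U\subseteq X(M_i)$, and then assert that \cref{open Z dense} ``gives'' $U\cap\sm{V}\neq\emptyset$. But \cref{open Z dense} only furnishes an \emph{equivalence} of conditions (i)--(iii); to extract any one of them you must have established another, and for $U$ you have established none. In particular, ``$U$ is Zariski dense in $V$'' is not automatic for a non-empty $\tau_i$-open subset of $V(M_i)$: over $\Rr$ the nodal cubic $y^2=-x^2(x+1)$ is geometrically integral, yet its singular point $(0,0)$ is \emph{isolated} in $V(\Rr)$, so $\{(0,0)\}$ is a non-empty $\tau$-open definable subset of $V(\Rr)$ that is not Zariski dense and misses $\sm{V}$. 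The missing step is exactly what the paper supplies: apply \cref{acl Ti}(iii) to $X\sminus U$, which has empty $\tau_i$-interior in $V$, to conclude it is not $F$-Zariski dense; irreducibility of $V$ then forces $U$ to be $F$-Zariski dense, and in particular $U$ meets the non-empty $F$-open $\sm{V}$. (An equally workable fix is to apply \textbf{(H$_i$)} to $X(M_i)\cap\sm{V}(M_i)$, which is Zariski dense since $\sm{V}$ is open dense; its non-empty $\tau_i$-interior lies inside both $U$ and $\sm{V}$.) Without one of these, the invocation of \cref{open Z dense} is circular.
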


\begin{proof} It is clear that (i) implies (ii) and, by \cref{open Z dense}, (iii) implies (i). So let us prove that (ii) implies (iii) and assume that \(X(M_i)\) is \(F\)-Zariski dense in \(V\) and let \(U\) be its \(\tau_i\)-interior in \(V\). Since  \(X\sminus U\) has empty \(\tau_i\)-interior in \(V\), by \cref{acl Ti}, it is not \(F\)-Zariski dense in \(V\). It follows that \(U\) is \(F\)-Zariski dense in \(V\). In particular \(U\cap \sm{V} \neq \emptyset\).
\end{proof}

\begin{remark}
Note that Hypothesis \textbf{(Z$_i$)} of \cref{SPTC elem} follows from
\textbf{(H$_{i}$)} (see \cref{H2i}). Indeed, by \cref{Z dense Ti}, a
definable set \(X\subseteq \Aa^n\) is Zariski dense if and only if it has non empty \(\tau_i\)-interior.

Also, if \(F\models T_{i,\forall}\) is such that \(\dcl_i(\emptyset) \subseteq \alg{F}\), then, by \cref{acl Ti}, \(T_{i, F, \forall}\) satisfies the conditions of \cref{better SPTC elem}. Therefore, for topological theories, the conditions for \(\SPTC\) to be elementary given in \cref{SPTC elem,better SPTC elem} are easy to ensure.
\end{remark}

We will now show the remarkable fact that, under some finiteness hypothesis,  \(\cL_I\)-existential closure comes essentially from independence of the \(\tau_i\) topologies.

\begin{hyp} We will consider the following hypothesis:
\begin{itemize} \label{F2}
\item[(\textbf{F}$_2$)] The set \(I\) is finite and for every \(M\models T_K\), there exists \(i\in I\) and an \(\Lrg(K)\)-embedding \(M_i \to M^\star \supsel M\).
\end{itemize}
\end{hyp}
In particular, the minimal fields of the form \(M\cap\sep{K}\), where \(M\models T_K\) are among the \(K_i = M_i\cap\sep{K}\), where \(M_i\models T_{i,K}\). Note that it also follows that every extension \(K\leq M\models T_I\) is totally \(T\).

It follows that a field is then \(I\)-pseudo \(T\)-closed if and only if it is \(I\)-pseudo \(\bigvee_{i\in I} T_i\)-closed, so the mention of \(T\) is now redundant.

\begin{definition}\label{PIC}
If \(I\) is finite, we will say that \(K\) is pseudo \(I\)-closed (\PIC) if it is \(I\)-pseudo \(\bigvee_{i\in I} T_i\)-closed.
\end{definition}

\begin{theorem}\label{free ec}

Assume \textbf{(H$_{i}$)} for all \(i\in I\) (see \ref{H2i}) and (\textbf{F}$_2$).
The following are equivalent:
\begin{enumerate}[(i)]
\item \(K\models\PIC\);
\item the following hold:
\begin{enumerate}
	\item \(K\models \PTC\);
	\item for all \(i \in I\), \(\dcl_i(\emptyset)\subseteq \alg{K}\);
	\item the \(\tau_i\) are distinct topologies on \(K\) as \(i\in I\) varies; 
	\item for all \(i \in I\), any non-empty open \(\cL_i(K)\)-definable \(X\subseteq \Aa^1\) contains a \(K\)-point.
\end{enumerate}
\end{enumerate}
\end{theorem}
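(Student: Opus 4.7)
The plan is to prove the two implications separately.

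\textbf{Direction (i) $\Rightarrow$ (ii).} Condition (b) is immediate from \cref{equiv SPTC}(iv), since $\dcl_i(\emptyset) \subseteq \dcl_i(K) \subseteq \alg{K}$. For (a), since $M_i \models T_{i,K}$ expands to a model of $T_K$, any totally $T$ variety $V$ satisfies $\sm{V}(M_i) \neq \emptyset$, so $V(M_i)$ is $K$-Zariski dense in $V$ by largeness; applying \cref{equiv SPTC}(iii) with $X_i = V(M_i)$ yields $V(K) \neq \emptyset$. For (d), a non-empty $\tau_i$-open $\cL_i(K)$-definable $X \subseteq \Aa^1$ is $K$-Zariski dense by \cref{Z dense Ti} (using (b)), so \cref{equiv SPTC}(iii) with $I_0 = \{i\}$ produces a $K$-point. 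For (c), suppose $\tau_i|_K = \tau_j|_K =: \tau$ for some $i \neq j$; by continuity of multiplication on $(K,\tau)$ at $(0,0)$ together with Hausdorffness, choose $\cL_i(K)$-definable bounded $\tau_i$-open neighborhood $U$ of $0$ and $\cL_j(K)$-definable bounded $\tau_j$-open neighborhood $V$ of $0$ such that $1 \notin (U \cap K) \cdot (V \cap K)$. The sets $U \sminus \{0\}$ and $\{x : x^{-1} \in V, x \neq 0\}$ are non-empty opens, hence $K$-Zariski dense by \cref{Z dense Ti}; \cref{equiv SPTC}(iii) then provides $x \in K^\times$ in their intersection, giving $1 = x \cdot x^{-1} \in (U \cap K)(V \cap K)$, a contradiction.

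\textbf{Direction (ii) $\Rightarrow$ (i).} We verify \cref{equiv SPTC}(iii). Let $V$ be a geometrically integral totally $T$ affine $K$-variety and $X_i(M_i) \subseteq V$ be $\cL_i(K)$-definable $K$-Zariski dense for $i \in I_0 \subseteq I$ finite. By (b) and \cref{Z dense Ti}, we may assume each $X_i$ is a non-empty $\tau_i$-open subset of $\sm{V}(M_i)$. Applying the proof of \cref{open Z dense}(iii) with $F = K$ (which, when we drop the hypothesis $\sep{F} \cap M_i \subseteq F$, yields a point in $\sm{V}(\sep{K} \cap M_i)$), we obtain $a_i \in X_i \cap \sm{V}(K_i)$, where $K_i := M_i \cap \sep{K}$. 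By (a) and largeness, $\sm{V}(K) \neq \emptyset$; fix $a \in \sm{V}(K)$. By \cite[Lemma~10.1]{JarRaz}, there exists a smooth geometrically integral affine $K$-curve $C \subseteq V$ containing $a$ and all the $a_i$; since $a \in C(K)$, the curve $C$ is totally $T$. Setting $U_i := X_i \cap C(M_i) \ni a_i$, each $U_i$ is a non-empty $\tau_i$-open $\cL_i(K)$-definable subset of the smooth projective model $\overline{C}$ of $C$.

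It then suffices to prove the following curve analog of \cref{approx curve}: for $\overline{C}$ a totally $T$ smooth projective geometrically integral $K$-curve and non-empty $\tau_i$-open $\cL_i(K)$-definable $U_i \subseteq \overline{C}(M_i)$, the intersection $\bigcap U_i \cap \overline{C}(K)$ is infinite. The proof mimics that of \cref{approx curve}, with three adaptations: (1) by (F$_2$), the set of V-topologies on $K$ arising from the $\PTC$-structure reduces to $\{\tau_j|_K : j \in I\}$, which is finite; (2) the role of \cref{approx aff} is played by the following consequence of (c), (d), and Stone's independence theorem applied to the distinct $\tau_j|_K$'s: for any non-empty $\tau_j$-open $\cL_j(K)$-definable $W_j \subseteq \Aa^1(M_j)$, $\bigcap_{j} W_j \cap K \neq \emptyset$ (since each $W_j \cap K$ is non-empty $\tau_j|_K$-open in $K$ by (d), then combine by Stone); (3) the inverse function theorem (\cref{InvFuncThe}) is applied inside $M_j$ with its Henselian $V$-topology $\tau_j$, rather than inside $K_{\tau_j|_K}$.

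\textbf{Main obstacle.} The principal difficulty lies in faithfully carrying out the Kollár-style inductive construction of the auxiliary geometrically integral curves $B_s$ from \cite[Lemma~15]{Kol-PAC} within our weaker setup and verifying that each $B_s$ remains totally $T$ (so that the induction hypothesis applies). The bookkeeping around the replacement of $K_{\tau}$-data by $M_j$-data is technical but routine; the essential content is the reduction, via (F$_2$), to finitely many topologies, and the systematic replacement of \cref{approx aff} by the $\Aa^1$-approximation extracted from (c) and (d).
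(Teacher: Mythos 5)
Your direction (i) $\Rightarrow$ (ii) is correct and slightly more detailed than the paper's one-line argument; in particular the contradiction you extract for (c) by choosing $U\setminus\{0\}$ and $\{x : x^{-1}\in V\}$ and intersecting them in $K^\times$ is a fine way to make "conditions (c) and (d) follow from \cref{equiv SPTC}.(iii) applied to $\Aa^1$" precise.

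For (ii) $\Rightarrow$ (i), however, your route diverges from the paper's and leaves a real gap. The paper's proof does not re-prove any approximation statement: it observes that, under \textbf{(F$_2$)}, every $K_M := M\cap\sep{K}$ with $M\models T_K$ is algebraic over some $K_i := M_i\cap\sep{K}$, uses \cref{induced V-top} to see each such $K_M$ carries a Henselian $V$-topology, and concludes that hypothesis \textbf{(H)} holds, that \textbf{(F$_1$)} holds, and that each $K_i$ is the minimal element of $C_{\tau_i}$; at that point \cref{approx var} applies verbatim and the proof is finished. You instead reduce (correctly) to a curve $C$ through the $a_i\in\sm V(K_i)$, but then attempt to re-derive a curve-level approximation statement with data in $M_i$, and you explicitly leave as "the principal difficulty" the Kollár-style construction of $B_s$ and the verification that it is totally $T$. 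That step is precisely the content of \cref{approx curve}, which is already available to you; your re-derivation is thus both redundant and, as presented, incomplete. Two concrete issues: first, the curve analog you state is not actually stronger than \cref{approx curve}, since $U_i\cap \overline C(K_i)$ is a non-empty (it contains $a_i$) $\tau_i$-open subset of $\overline C(K_i)$, so once the standing hypotheses of Section~3 are verified you may just cite \cref{approx curve}; second, and more importantly, you never verify hypothesis \textbf{(H)} nor the minimality of the $K_i$ in $C_{\tau_i}$, and these are exactly what the proof of \cref{approx curve} requires to run (in particular, the step showing that $B_s$ is totally $T$ needs \cref{PTC dense}'s identification of minimal $K_M$'s with the $K_\tau$'s, which in turn rests on \textbf{(H)}). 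Your adaptation (1) asserts a version of \textbf{(F$_1$)} but does not derive it, and \textbf{(H)} and minimality are never addressed. So the proposal does not close this direction; the needed insight you are missing is that (ii) supplies exactly the ingredients to check the hypotheses of \cref{approx var} rather than to rerun its proof.
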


\begin{proof}
Assuming (i), (a) obviously follows. Condition (b) follows from \cref{equiv
SPTC}.(iv) and conditions (c) and (d) follow from \cref{equiv SPTC}.(iii)
applied to \(\Aa^1\). Conversely, let us assume (ii). By \cref{equiv SPTC,Z
dense Ti,open Z dense}, it suffices to prove that, given \(V\) a geometrically
integral totally \(T\) variety over \(K\) and \(X_i\subseteq V\)
\(\cL_i(K)\)-definable \(\tau_i\)-open with \(X_i(K_i)\cap\sm{V}\neq\emptyset\),
we have \(\bigcap_i X_i(K)\neq\emptyset\). 

Note that, by Hypothesis \textbf{(F$_2$)}, any \(K_M = M\cap \sep{K}\), where \(M\models T_K\), is an algebraic extension of some \(K_i\), which, by \cref{induced V-top} admits a Henselian V-topology extending \(\tau_i\). So \textbf{(H)} holds and if \(K_M\) is minimal in some \(C_\tau\) and not separably closed, then \(\tau =\tau_i\) and \(K_M\isom K_i\) --- in particular, \textbf{(F$_1$)} holds. Since the \(\tau_i\) are all distinct, it also follows that every \(K_i\) is minimal in \(C_{\tau_i}\). We can, therefore, conclude with \cref{approx var}.
\end{proof}

\begin{remark}
If \textbf{(F$_2$)} does not hold but, instead, \textbf{(H)} holds and no
\(M\models T_K\) contains \(\sep{K}\), then, by \cite[Theorem\ 4.9]{Sch-RegT},
\cref{free ec} also holds.
\end{remark}

\section{Bounded \texorpdfstring{\(\PIC\)}{PIC} fields} 
\label{BoundedPTC}

From now on, we assume that $I$ is finite and that \(T = \bigvee_{i\in I} T_i\).
In particular, any extension \(K\leq M\models T_I\) is totally \(T\), and we
will therefore prefer the terminology pseudo \(I\)-closed (see \cref{PIC}).

We fix some \(I_\tau \subseteq I\), and we assume that, for every \(i\in
I_\tau\), \textbf{(H$_i$)} holds (see Hypothesis \ref{H2i}). By \cref{rem elem Hi},
hypotheses \textbf{(Z$_i$)} and \textbf{(A$_i$)} of \cref{SPTC elem} holds for
every \(i\in\tau\) (provided we add constants). We also assume that
\textbf{(Z$_i$)} holds for every \(i\nin I_\tau\), and we fix \(\Td_i\) as in
\cref{better SPTC elem}.

\begin{definition}
A field $K$ is called \emph{bounded} if for any integer $n$, $K$ has finitely many extensions of degree $n$.
\end{definition}

\begin{notation}\label{bSPTC}
Fix \(\fd : \Zz_{>0} \to \Zz_{>0}\). Let \(\cL_\fd := \cL_I
\cup\{c_j \mid j > 0\) and \(|c_j| = \fd(j)\}\). We denote by \(T_\fd\) the
\(\Ld\)-theory of fields such that \(P_i := X^{\fd(i)} + \sum_{j<\fd(i)} c_{i,j}
X^j\) is irreducible and every separable polynomial of degree \(i\) is split
modulo \(P_i\). Let \(\bSPTC\) be the (elementary) class of \(\PIC\) models of
\(T_\fd\cup\bigcup_{i\nin I_{\tau}}\Td_i\).

From now on, we will always work in the language $\Ld$, so if $K$ is $\bSPTC$ and
$A\leq K$, then $\acl(A), \dcl(A), \acl_i(A), \dcl_i(A)$ always contain the
constants.
\end{notation}

\begin{remark}
Any model of \(T_\fd\) is bounded. Conversely, any bounded model of \(T\) can be made into a model of \(T_\fd\) for some well-chosen \(\fd\).
\end{remark}

\begin{lemma}\label{char gal}
Let \(K\models T\) and \(F \leq K\). The following are equivalent:
\begin{enumerate}
\item \(K\models T_\fd\) and \(\res : \Gal(K)\to\Gal(F)\) is surjective;
\item \(F\models T_\fd\) and \(\res : \Gal(K)\to\Gal(F)\) is a homeomorphism;
\item \(K\models T_\fd\) and \(F\models T_\fd\).
\end{enumerate}
\end{lemma}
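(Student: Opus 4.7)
My plan is to establish the cycle $(3) \Rightarrow (1) \Rightarrow (2) \Rightarrow (3)$, using throughout the standard Galois-theoretic characterization: $\res : \Gal(K) \to \Gal(F)$ is surjective iff $\sep{F} \cap K = F$, injective iff $\sep{K} = K \sep{F}$, and (by compactness of profinite groups, since $\res$ is automatically continuous) a homeomorphism iff it is a bijection. The crucial observation is that since the $c_{i,j}$ are shared constants of $\Ld$, each $P_i$ is a polynomial over both $F$ and $K$, and its roots $\alpha_i$ lie in $\sep{F} \subseteq \sep{K}$ because $P_i$ is separable; with this convention $L_i^F := F(\alpha_i)$ and $L_i^K := K(\alpha_i)$.

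For $(3) \Rightarrow (1)$, I would pick $\beta \in \sep{F} \cap K$ of some degree $j$ over $F$; using $F \models T_\fd$, $\beta$ lies in $L_j^F$. The Galois closure of $F(\alpha_j)/F$ inside $\sep{F}$ meets $K$ inside $\sep{F} \cap K = F$, so by the usual criterion for finite Galois extensions this closure is linearly disjoint from $K$ over $F$, and therefore so is the subextension $F(\alpha_j)$. This gives $[F(\alpha_j) K : K] = [F(\alpha_j):F] = \fd(j) = [K(\alpha_j):K]$ (the last equality from $K \models T_\fd$), so $F(\alpha_j) \cap K = F$ and $\beta \in F$.

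For $(1) \Rightarrow (2)$, irreducibility of $P_i$ over $F$ follows from irreducibility over $K \supseteq F$. Any separable $Q \in F[X]$ of degree $i$ splits in $L_i^K$ (via $K \models T_\fd$) with roots in $\sep{F}$, so the splitting axiom for $F$ reduces to showing $\sep{F} \cap L_i^K = L_i^F$. Given $\gamma$ in this intersection, set $E := F(\alpha_i, \gamma) \subseteq \sep{F}$; then $KE = K F(\alpha_i) = L_i^K$, and the same Galois-closure argument yields linear disjointness of $E$ and $K$ over $F$, so $[E:F] = [KE:K] = \fd(i) = [F(\alpha_i):F]$, forcing $E = F(\alpha_i)$ and $\gamma \in L_i^F$. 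For injectivity of $\res$, note that $K \models T_\fd$ gives $\sep{K} = \bigcup_i L_i^K = K \cdot \bigcup_i F(\alpha_i) \subseteq K \sep{F} \subseteq \sep{K}$.

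For $(2) \Rightarrow (3)$, the iso $\res$ induces an index-preserving bijection between open subgroups of $\Gal(F)$ and $\Gal(K)$ via $H \leftrightarrow \res^{-1}(H)$, under which the fixed field $E \subseteq \sep{F}$ of $H$ corresponds to $KE \subseteq \sep{K}$ (the inclusion $KE \subseteq \text{Fix}(\res^{-1}(H))$ is clear; equality is then forced by a degree count, using linear disjointness coming from surjectivity of $\res$). Applied to $L_i^F$, this shows $K L_i^F = K(\alpha_i)$ has degree $\fd(i)$ over $K$, so $P_i$ is irreducible over $K$; applied to $N_F^i := \bigcap_{[\Gal(F):H']\leq i} H'$, whose fixed field is $L_i^F$ by $F \models T_\fd$, it shows $K(\alpha_i)$ is the compositum of all separable extensions of $K$ of degree $\leq i$, so every separable polynomial of degree $i$ over $K$ splits in $K(\alpha_i)$. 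The only step demanding any care is the linear disjointness used in $(3) \Rightarrow (1)$ and $(1) \Rightarrow (2)$: upgrading $\sep{F} \cap K = F$ to genuine linear disjointness of a finite separable subextension with $K$ over $F$, which is handled uniformly by passing to the Galois closure in $\sep{F}$.
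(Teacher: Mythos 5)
Your cycle $(3)\Rightarrow(1)\Rightarrow(2)\Rightarrow(3)$ is the reverse of the paper's $(1)\Rightarrow(3)\Rightarrow(2)\Rightarrow(1)$, but both use the same toolkit (degree counts, linear disjointness via Galois closure, the profinite Galois correspondence), so this is a minor variation rather than a genuinely different route. Two of your three implications are fine; $(1)\Rightarrow(2)$ in particular is clean and more explicit than the paper's terse $(2)\Rightarrow(1)$.

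There is, however, a genuine circularity in your $(3)\Rightarrow(1)$. You write that "the Galois closure of $F(\alpha_j)/F$ inside $\sep{F}$ meets $K$ inside $\sep{F}\cap K = F$" --- but $\sep{F}\cap K = F$ is exactly the statement you are in the middle of proving, so you cannot invoke it. Your closing remark, that the needed linear disjointness is "handled uniformly by passing to the Galois closure in $\sep{F}$", is therefore valid for $(1)\Rightarrow(2)$ (where surjectivity, hence $\sep{F}\cap K = F$, is a hypothesis) but not for $(3)\Rightarrow(1)$. The fix is to reverse your order of inference: from (3) you directly get $K(\alpha_j) = F(\alpha_j)K$ and $[F(\alpha_j)K:K] = [K(\alpha_j):K] = \fd(j) = [F(\alpha_j):F]$ (both equalities from irreducibility of $P_j$ over the respective fields). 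The numerical equality $[F(\alpha_j)K:K] = [F(\alpha_j):F]$ by itself already forces $F(\alpha_j)$ and $K$ to be linearly disjoint over $F$ --- no Galois closure is required for this standard criterion --- and in particular $F(\alpha_j)\cap K = F$. You have all the ingredients; they just need to appear in the right order.

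A lesser remark on $(2)\Rightarrow(3)$: the claim that the fixed field of $N_F^i := \bigcap_{[\Gal(F):H']\le i} H'$ is exactly $L_i^F$ is slightly stronger than the $T_\fd$ axioms give you. What the axioms (together with a padding argument, extending a separable polynomial of degree $m<i$ to one of degree $i$) yield is the containment $\mathrm{Fix}(N_F^i) \subseteq L_i^F$, not equality in general. But that containment is all you actually use: it gives $\mathrm{Fix}(N_K^i) = K\cdot\mathrm{Fix}(N_F^i)\subseteq K(\alpha_i)$, which is enough to conclude that every separable polynomial of degree $i$ over $K$ splits in $K(\alpha_i)$.
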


This is rather standard, but we could not find a reference, so we sketch the proof.

\begin{proof}
Let us first assume (1) and prove (3). The polynomial \(P_n\) is clearly
irreducible over \(F\). Let \(Q \in F[X]\) be an irreducible separable
polynomial of degree \(n\) and $a$ one of its roots. Since $Q$ is split in
$K[X]/P_n$, $P_n$ admits a root $e\in \sep{F}$ of $P_n$ such that $a \in K[e]$.
Then $\Gal(F[e]) = \res(\Gal(K[e])) \leq \res(\Gal(K[a])) = \Gal(F[a])$ where the
equalities follow from surjectivity; and hence $F[a]\leq F[e]$.

If (3) holds, then every separable extension of $K$ is included by one generated
by an element of $\sep{F}$ and hence \(\res : \Gal(K)\to\Gal(F)\) is injective.
Also, any separable extension of $F$, is contained in one generated by a root of
a \(P_n\) which is also irreducible over $K$. It follows that $\sep{F}$ is
linearly disjoint from \(K\) over \(F\) and hence \(\res : \Gal(K)\to\Gal(F)\)
is surjective, so (2) holds. Conversely, if (2) holds, then, by Galois
correspondence, the $P_n$ are irreducible over $K$ and $\sep{K} = K\sep{F}$ so
$K\models T_\fd$ and (1) holds.
\end{proof}

\begin{lemma}\label{forth}
Let \(K,M\models\bSPTC\), with \(K\) perfect, and \(F \leq K\) with
\(\acl_i(F)\cap K \subseteq F\), for all \(i\in I\). Then any
\(\cL_\fd\)-embedding \(f : F \to M\) can be extended to an
\(\cL_\fd\)-embedding \(g:K\to M^\star\supsel M\) with \(\acl_i(g(K))\cap
L^\star\subseteq g(K)\), for all \(i\in I\).
\end{lemma}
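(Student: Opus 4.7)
The plan is to take \(M^\star\) a sufficiently saturated \(\cL_\fd\)-elementary extension of \(M\) (note that \(\bSPTC\) is elementary by \cref{SPTC elem,better SPTC elem}, so \(M^\star \models \bSPTC\)) and to realize in \(M^\star\) the \(\cL_\fd\)-diagram of \(K\) over \(F\), transported via \(f\). By saturation of \(M^\star\), this reduces to showing that for every finite tuple \(\bar a \in K\) and every \(i \in I\), the \(\cL_i\)-types \(f_\ast \qftp_i(\bar a / F)\) admit a common realization in some \(\cL_\fd\)-extension of \(M^\star\).

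Fix such a tuple \(\bar a\), let \(V\) be its locus over \(F\), and write \(V^f\) for its transport over \(f(F) \leq M^\star\). The hypothesis \(\acl_i(F) \cap K \subseteq F\) yields \(\alg{F} \cap K \subseteq F\) and, combined with the perfectness of \(K\), makes \(F \leq K\) (hence \(F \leq F(V)\)) a regular extension, so \(V\) is geometrically integral. For each \(i \in I\), I would apply \cref{FA Ti:reg} to \(F \leq F(V)\) and the \(\cL_i\)-embedding \(f : F \to M^\star\) to endow \(M^\star \tensor_F F(V) \isom M^\star(V^f)\) with a \(T_{i, \forall}\)-structure; this embeds into some \(N_i \models T_{i, M^\star}\) where the generic point of \(V^f\) is a smooth \(N_i\)-point. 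Since \(T_i\) eliminates quantifiers, \(T_{i, M^\star}\) is complete, hence \(\sm{V^f}(N) \neq \emptyset\) for every \(N \models T_{i, M^\star}\); as \(T = \bigvee_i T_i\), this shows that \(V^f\) is totally \(T\) over \(M^\star\).

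The same amalgam \(M^\star(V^f)\) also witnesses that \(p_i^f := f_\ast \qftp_i(\bar a / F)\) extends to some \(q_i \in \TP^i(M^\star)\) that is \(M^\star\)-generic in \(V^f\). By \cref{equiv SPTC}.(ii) applied to \(M^\star \models \PIC\) and the totally \(T\) variety \(V^f\), \(\bigcup_i q_i\) is realized in some \(M^{\star\star} \supseteq M^\star\); therefore \(\bigcup_i p_i^f\), viewed as a type over \(f(F)\), is consistent with \(\Th_{\cL_\fd}(M^\star)\) and hence realized in \(M^\star\) itself by saturation. Compactness then yields the desired \(\cL_\fd\)-embedding \(g : K \to M^\star\) extending \(f\).

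For the \(\acl_i\)-condition, since \(g\) is an \(\cL_\fd\)-embedding with \(K \models T_\fd\), also \(g(K) \models T_\fd\); \cref{char gal} applied to \(g(K) \leq M^\star\) then gives \(\sep{g(K)} \cap M^\star = g(K)\). Perfectness of \(K\) passes to \(g(K)\), so \(\alg{g(K)} = \sep{g(K)}\) and hence \(\alg{g(K)} \cap M^\star = g(K)\). Combined with \(\acl_i(g(K)) \subseteq \alg{g(K)}\) (via \cref{acl Ti} for \(i \in I_\tau\) and the defining property of \(\Td_i\) for \(i \notin I_\tau\)), this forces \(\acl_i(g(K)) \cap M^\star \subseteq g(K)\) for every \(i \in I\). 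The main obstacle is ensuring that \(V^f\) is totally \(T\) over \(M^\star\) uniformly in all \(i\), which rests on the free amalgamation \cref{FA Ti:reg} together with completeness of \(T_{i, M^\star}\) from quantifier elimination; once that is in place, the \(\acl_i\)-condition on \(g(K)\) follows essentially for free from boundedness and perfectness.
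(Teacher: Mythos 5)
Your proof is correct, but it takes a noticeably more hands-on route than the paper's. The paper simply observes that $F$ is perfect (hence $F\leq K$ is regular, and $\alg{F}\cap K\subseteq F$ follows from $\acl_i(F)\cap K\subseteq F$ as you note), and then invokes \cref{better SPTC:reg} together with the fact that $M\models\PIC$ to produce $g$ in one step: $K\tensor_F M$ is a regular totally $T$ extension of $M$ carrying a compatible $\cL_I$-structure, so by $\cL_I$-existential closure of $M$ it embeds over $M$ into some $M^\star\supsel M$. You instead unroll that corollary: you realize, tuple by tuple, the transported quantifier-free $\cL_d$-type of $K$ over $F$ in a saturated $M^\star$, using \cref{FA Ti:reg} to produce for each $i$ a generic $q_i\in\TP^i(M^\star)$ in $V^f$, then applying \cref{equiv SPTC}.(ii). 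Both routes are sound. One remark: you flag the verification that $V^f$ is totally $T$ over $M^\star$ as "the main obstacle," and prove it via completeness of $T_{i,M^\star}$ from quantifier elimination. That argument is fine, but in the standing setup of this section ($T = \bigvee_{i\in I} T_i$, so every extension $K\leq M\models T_I$ is automatically totally $T$) it is unnecessary work --- \cref{better SPTC:reg} has already packaged it. For the $\acl_i$-condition your argument is actually a touch cleaner than the paper's: you apply \cref{char gal} directly to $g(K)\leq M^\star$ (both models of $T_\fd$) to get surjectivity of $\res:\Gal(M^\star)\to\Gal(g(K))$, whereas the paper factors through $g(F)$; both give $\alg{g(K)}\cap M^\star = g(K)$ (using perfectness of $K$, hence of $g(K)$, to identify $\sep{}$ with $\alg{}$), and then $\acl_i(g(K))\subseteq\alg{g(K)}$ finishes.
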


\begin{proof}
Note that \(F\) is perfect and hence \(F\leq K\) is regular. So \(g\) exists by \cref{better SPTC:reg} and the fact that \(M\models\PIC\). There remains to show that \(\acl_i(g(K))\cap M^\star\subseteq g(K)\). Since \(\acl_i(K)\subseteq \alg{K}\), it suffices to show that \(\alg{g(K)}\cap M^\star\subseteq g(K)\). By \cref{char gal}, since \(K\models T_\fd\) and \(F\leq K\) is regular, \(F\models T_\fd\) and hence, by \cref{char gal} again, \(\Gal(M^\star) \to \Gal(g(F))\) is a homeomorphism. It factorizes through the homeomorphism \(\res : \Gal(K) \to \Gal(F)\) and hence \(\res:\Gal(M^\star)\to\Gal(g(K))\) is surjective; \emph{i.e.} \(\alg{g(K)}\cap M^\star = g(K)\).
\end{proof}

\begin{definition}
For every \(K\models\bSPTC\) and \(A\subseteq K\), let \(\acl_I(A) \subseteq K\) be the smallest subset \(C\subseteq K\) containing \(A\) such that \(\acl_i(C)\subseteq C\), for all \(i\in I\).
\end{definition}

\begin{proposition}\label{char elem}
Let \(K,M\models \bSPTC\) be perfect, \(F \leq K\) and \(f : F\to M\) be an \(\Ld\)-embedding. The following are equivalent:
\begin{enumerate}
\item \(f\) extends to an \(\Ld\)-embedding \(g : \acl_I(F)\cap K \to M\);
\item \(f\) is \(\Ld\)-elementary (from \(K\) to \(M\)).
\end{enumerate}
\end{proposition}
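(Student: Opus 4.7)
The direction $(2)\imp(1)$ is routine: since $\acl_I(F)\cap K\subseteq\acl_\Ld(F)\cap K$, any $a\in\acl_I(F)\cap K$ realizes an algebraic $\Ld$-type over $F$. An $\Ld$-elementary $f$ transports this type to an algebraic type over $f(F)$ that is realized in $M$ by elementarity; a compactness/coherence argument on finite tuples then furnishes the desired extension $g : \acl_I(F)\cap K\to M$.

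For $(1)\imp(2)$ I would argue by back-and-forth. Replacing $F$ by $\acl_I(F)\cap K$ and $f$ by $g$, we may assume $\acl_i(F)\cap K\subseteq F$ for every $i\in I$. Then $F$ is automatically perfect: in positive characteristic $p$, any $a\in F^{1/p}\subseteq K$ is the unique root of $X^p-a^p\in F[X]$, hence lies in $\dcl_i(F)\subseteq\acl_i(F)\cap K\subseteq F$. Moreover, by quantifier elimination for each $T_i$, the closure condition lifts to any $\Ld$-elementary extension: $\acl_i(F)\cap K^\star = \acl_i(F)\cap K\subseteq F$ for $K^\star\supsel K$.

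Fix very saturated $\Ld$-elementary extensions $K^\star\supsel K$ and $M^\star\supsel M$, and let $\mathcal F$ be the class of $\Ld$-isomorphisms $h : A\to B$ with $A\leq K^\star$, $B\leq M^\star$ both perfect, satisfying $\acl_i(A)\cap K^\star\subseteq A$ and $\acl_i(B)\cap M^\star\subseteq B$ for all $i\in I$. The plan is to show (a) that $f$ extends to some $h_0\in\mathcal F$ and (b) that $\mathcal F$ has the back-and-forth property; combined, these entail that $f$ is $\Ld$-elementary. For (b), given $h\in\mathcal F$ and $a\in K^\star$, \cref{forth} applied to $h$ yields an $\Ld$-embedding $\tilde h : K^\star\to M^{\star\star}$ (with $M^{\star\star}\supsel M^\star$) extending $h$ and having $\acl_I$-closed image. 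Letting $c := \tilde h(a)$, by $|A|^+$-saturation of $M^\star$ we realize $\tp^\Ld(c/B)$ by some $b\in M^\star$, so that sending $a\mapsto b$ extends $h$ to an $\Ld$-isomorphism $A(a)\to B(b)$; we then iteratively close under perfect closure and $\acl_I$ on both sides. The back direction is symmetric (\cref{forth} applies with the roles of $K$ and $M$ swapped, as both are perfect models of $\bSPTC$). Claim (a) is handled similarly, starting from $f$ and closing $f(F)$ under $\acl_I$ inside $M^\star$ by matching on the $K^\star$-side.

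The main obstacle is the iterated $\acl_I$-matching step: each $\alpha\in \acl_i(A')\cap K^\star$ added on one side must be paired with some $\beta\in \acl_i(B')\cap M^\star$ on the other, compatibly with $h$. This would be handled by \cref{forth} together with \cref{char gal}: since $K^\star,M^\star\models T_\fd$ and $A,B$ are perfect $\Ld$-isomorphic subfields, \cref{char gal} canonically transports the Galois structure of $A$ inside $K^\star$ to that of $B$ inside $M^\star$, and $\cL_i$-algebraic extensions on one side correspond to those on the other, with saturation furnishing the matching elements at each step. Once (a) and (b) are established, the standard back-and-forth produces an $\Ld$-isomorphism between sufficiently saturated extensions of $K$ and $M$ that extends $f$, witnessing that $f$ is $\Ld$-elementary.
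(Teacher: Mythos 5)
Your overall strategy is the same as the paper's: reduce to the case where $F$ is $\acl_I$-closed and then run a back-and-forth argument powered by Lemma~\ref{forth}. The $(2)\Rightarrow(1)$ direction is fine (and not circular): since $\acl_i(F)\subseteq\alg{F}$ by \textbf{(A$_i$)}, one gets $\acl_I(F)\cap K\subseteq\alg{F}\cap K\subseteq\acl(F)$ without invoking Proposition~\ref{char acl}, and an elementary map always extends to $\acl$.

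For $(1)\Rightarrow(2)$, however, you correctly flag the $\acl_I$-matching as the main obstacle, but the paragraph you offer to overcome it does not actually close it. Appealing to \cref{char gal} to "transport the Galois structure of $A$ inside $K^\star$ to that of $B$ inside $M^\star$" identifies the \emph{field-theoretic} lattice of finite algebraic extensions on both sides, but says nothing about the $\cL_i$-relations on those extensions, which is precisely what must be matched to stay inside $\mathcal F$. The clean way to finish your version of the argument is: \cref{forth} produces $\tilde h:K^\star\to M^{\star\star}$ whose image is \emph{relatively algebraically closed} (that is exactly what the proof of \cref{forth} establishes, via \cref{char gal}); hence $c'$, a tuple enumerating $\tilde h(\acl_I(Aa)\cap K^\star)$, enumerates a relatively algebraically closed subfield of $M^{\star\star}$ over $B$, and this property is encoded in the full $\Ld$-type of $c'$ over $B$ (for each polynomial with coefficients from $Bc'$, either it has no root, or a specific root in $Bc'$, and these are first-order over $B$ in the variables $c'$). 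Realizing $\tp^{\Ld}(c'/B)$ in $M^\star$ (possible since $M^\star\prec M^{\star\star}$ is saturated) thus automatically produces $b'$ enumerating an $\acl_I$-closed perfect subfield, giving an extension in $\mathcal F$ in a single step and bypassing any iterated matching. The paper's phrasing achieves the same effect more economically by building increasing chains $K\subsel K_1\subsel\cdots$ and $M\subsel M_1\subsel\cdots$ and, at each stage, applying \cref{forth} to send \emph{all} of the current $K_n$ (or $M_n$), so that the closure condition is supplied by \cref{forth} itself and the matching question never arises. As written, your proof contains a genuine gap at this step; the surrounding framework (the family $\mathcal F$, the use of saturation, claim (a) via \cref{char gal}) is sound.
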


\begin{proof}\ 
\begin{itemize}
\item[(1) $\Rightarrow$ (2)] Let \(g : \acl_I(F)\cap K \to M\) extend \(f\). By
a standard back and forth argument using \cref{forth}, we build \(K\subsel
K^\star\), \(M\subsel M^\star\) and an \(\Ld\)-isomorphism \(h : K^\star \to
M^\star\) extending \(g\). Elementarity of \(f\) follows immediately.
\item[(2) $\Rightarrow$ (1)] This is immediate.\qedhere
\end{itemize}
\end{proof}

\begin{corollary}\label{mod compl}
Any \(K\leq M\), with both fields \(\bSPTC\) perfect, is \(\Ld\)-elementary.
\end{corollary}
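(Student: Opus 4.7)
The plan is to deduce the corollary directly from \cref{char elem}, by choosing $F := K$ and letting $f : K \to M$ be the inclusion. This $f$ is tautologically an $\Ld$-embedding since $K \leq M$ as $\Ld$-structures, so the proposition applies.

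To invoke it, I would verify condition (1) of \cref{char elem}: that $f$ extends to an $\Ld$-embedding $g : \acl_I(K) \cap K \to M$. Here I appeal to the definition of $\acl_I(A)$ as the smallest subset of $K$ containing $A$ and closed under each $\acl_i$. Taking $A = K$, the set $K$ itself already contains $K$ and is trivially $\acl_i$-closed within $K$ for every $i$, so $\acl_I(K) = K$ and hence $\acl_I(K) \cap K = K$. I may therefore take $g := f$, so condition (1) is satisfied.

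The proposition then gives the equivalent condition (2), namely that $f$ is $\Ld$-elementary; equivalently, $K \leq M$ is an $\Ld$-elementary extension, which is exactly the statement of the corollary.

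There is essentially no obstacle here: all the real content (the back-and-forth construction built on \cref{forth}) has already been absorbed into the proof of \cref{char elem}. The corollary is simply the observation that the degenerate choice $F = K$ makes condition (1) of that proposition hold automatically, so that model completeness of $\bSPTC$ among perfect models in $\Ld$ drops out for free.
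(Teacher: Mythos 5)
Your proof is correct and invokes the same key result as the paper, namely \cref{char elem} with the choice $F = K$ and $f$ the inclusion $K \to M$. The difference is purely in how condition (1) of \cref{char elem} is verified. You observe that $\acl_I(K)\cap K = K$ is a tautology (since $\acl_I(K)$ contains $K$ and the intersection with $K$ is automatically contained in $K$), so condition (1) holds with $g = f$; this is valid, and the real content (the Galois-theoretic fact $\alg K\cap M = K$ that powers the back step of the back-and-forth) is already absorbed into the proofs of \cref{char elem} and \cref{forth}. The paper's own proof instead re-derives precisely that fact: it uses \cref{char gal} to get surjectivity of $\res\colon \Gal(M)\to\Gal(K)$ (there is a notational slip — the text writes $\Gal(K)\to\Gal(F)$) and then the hypothesis $\acl_i(K)\subseteq\alg K$ to conclude $\acl_I(K)\cap M\subseteq K$ before citing \cref{char elem}. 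Strictly speaking that extra computation is redundant given the phrasing of \cref{char elem}; your shortcut is a cleaner route to the same conclusion.
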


\begin{proof}
By  \cref{char gal}, the morphism \(\res:\Gal(K)\to\Gal(F)\) is surjective. Since \(\acl_i(K)\subseteq \alg{K}\), it follows that \(\acl_I(K) \cap M \subseteq K\) and hence, by \cref{char elem} the inclusion is elementary.
\end{proof}

For the rest of the section we fix a perfect \(K\models \bSPTC\) and \(M_i\models T_{i,K}\).

\begin{proposition}\label{char acl}
 Let $F \leq K$. Then $\acl(F) = \acl_I(F) \cap K$.
\end{proposition}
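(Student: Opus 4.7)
The plan is to establish the two inclusions separately.

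For the easier direction $\acl_I(F) \cap K \subseteq \acl(F)$, note that $\cL_i \subseteq \cL_\fd$ for every $i \in I$, so every finite $\cL_i(C)$-definable subset of $M_i$, restricted to $K$, is a finite $\cL_\fd(C)$-definable subset of $K$. Hence $\acl_i(C) \cap K \subseteq \acl^K(C)$ for every $C \subseteq K$, and by idempotence of $\acl^K$ the set $\acl^K(F)$ is $\acl_i$-closed in $K$ for all $i$. The minimality characterization of $\acl_I(F)$ then yields $\acl_I(F) \cap K \subseteq \acl^K(F)$.

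For the reverse inclusion, set $F' := \acl_I(F) \cap K$ and fix $a \in K \setminus F'$; for every $n$ we will produce $n$ pairwise distinct realizations of $\tp^K(a/F)$ in an $\cL_\fd$-elementary extension of $K$, whence $a \notin \acl^K(F)$. First observe that $F'$ is perfect: in characteristic $p$, the unique $p$-th root in $K$ of any $x \in F'$ is the only solution in $M_i$ of $y^p = x$, hence lies in $\acl_i(F') \cap K \subseteq F'$. Moreover $\alg{F'} \cap K \subseteq \acl_i(F') \cap K \subseteq F'$ (since $\cL_i \supseteq \Lrg$ makes every field-algebraic element $\cL_i$-algebraic), so $F'$ is relatively algebraically closed in $K$; combined with perfectness this makes $F' \leq K$ a regular field extension.

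Fix $n$ and let $K_1, \ldots, K_n$ be copies of $K$. The tensor product $L_n := K_1 \otimes_{F'} \cdots \otimes_{F'} K_n$ is an integral domain by iterated regularity, and under the standing convention $T = \bigvee_{i \in I} T_i$ every $T_I$-extension is automatically totally $T$, so iterated application of \cref{better SPTC:reg} endows the fraction field of $L_n$ with a $T_I$-structure simultaneously extending the structure on each $K_j$. Fix a sufficiently saturated $\cL_\fd$-elementary extension $\monster \succeq K$ with $\monster \models \bSPTC$ perfect. Since $K \models \PIC$, $K$ is $\cL_I$-existentially closed in the regular totally $T$ extension $K_1 \leq L_n$, and a standard saturation argument produces an $\cL_\fd$-embedding $L_n \hookrightarrow \monster$ extending the inclusion $K_1 \hookrightarrow \monster$.

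Composing with the factor inclusions yields $n$ $\cL_\fd$-embeddings $g_1, \ldots, g_n : K \to \monster$, each fixing $F'$ (hence $F$) pointwise, with $g_j(a)$ the image of the tensor having $a$ in the $j$-th slot and $1$ elsewhere. By \cref{mod compl} each image $g_j(K)$ is $\cL_\fd$-elementary in $\monster$, so every $g_j$ is elementary and every $g_j(a)$ realizes $\tp^K(a/F)$; the distinguished tensors are pairwise distinct in $L_n$ precisely because $a \notin F'$, so $g_1(a), \ldots, g_n(a)$ are pairwise distinct in $\monster$. The main obstacle is the embedding $L_n \hookrightarrow \monster$ extending $K_1 \hookrightarrow \monster$, which combines the $\cL_I$-existential closure of $K$ in regular totally $T$ extensions (from $K \models \PIC$) with the saturation of $\monster$.
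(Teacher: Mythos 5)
Your proof is correct. Both directions are handled soundly: the easy inclusion follows exactly as you say from absoluteness of quantifier-free formulas for substructures, and the hard direction correctly uses regularity of $F' \leq K$, iterated applications of \cref{better SPTC:reg} to give the $n$-fold tensor power a $T_I$-structure, existential closure of $K$ in the resulting regular totally $T$ extension, and \cref{mod compl} to promote the embeddings $g_j$ to elementary maps fixing $F$ pointwise. The pairwise distinctness of the $g_j(a)$ from $a \notin F'$ via linear disjointness is also correct.

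The route differs from the paper's in one respect. The paper tensors only \emph{one} copy of $K$ over $E := \acl_I(F)\cap K$, obtaining a single $\Ld(E)$-elementary embedding $f : K \to K^\star \supsel K$ with $f(K)\lindep_E K$, and then concludes directly from $\acl(F) \subseteq K\cap f(K) = E$, where the equality $K\cap f(K)=E$ is immediate from linear disjointness. You instead tensor $n$ copies to exhibit $n$ distinct conjugates of $a$ over $F$ for every $n$. Both arguments are correct and rest on the same underlying lemmas, but the paper's single-copy version is more economical: the linear-disjointness intersection trick saves you from having to iterate and count conjugates. Your approach has the mild pedagogical advantage of making the "infinitely many conjugates" characterization of non-algebraicity explicit.

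One small point worth being careful about: you phrase the target as producing realizations of $\tp^K(a/F)$ in an elementary extension of $K$, but your embeddings $g_j$ actually fix the larger set $F'$ pointwise, so you are in fact showing $a \notin \acl(F')$; this is fine since $\acl(F)\subseteq\acl(F')$, but it is slightly cleaner (and is what the paper implicitly does) to argue over $E=F'$ throughout.
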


\begin{proof}
Let $E = \acl_I(F) \cap K\subseteq \acl(F)$. By \cref{better SPTC:reg}, we find an \(\Ld(E)\)-embedding \(f : K \to K^\star \supsel K\) with \(f(K) \lindep_E K\). By \cref{mod compl}, \(f\) is elementary. It follows that \(\acl(F) \subseteq K\cap f(K)= E\).
\end{proof}

\begin{notation}
For every \(p\in \TP(F)\), where \(F\leq K\), let \(p_i\) denote the underlying
(quantifier free) \(\cL_{i}\)-type.
\end{notation}

\begin{proposition}\label{loc dense}
Let \(F = \acl(F) \leq K\),  $V$ an irreducible variety over $F$, \(p\in\TP(F)\)
\(F\)-generic in \(V\) and, for \(i\in I_\tau\), let \(O_i\subseteq V(M_i)\) be
\(\cL_i(K)\)-definable \(\tau_i\)-open sets consistent with \(p_i\) (modulo
\(T_{i, K}\)). Then \(p\) is consistent with \(\bigcap_{i\in I_\tau} O_i(K)\).
\end{proposition}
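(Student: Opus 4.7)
The plan is to reduce the proposition to \cref{equiv SPTC}(iii) applied to the base change $V_K := V \times_F \Spec K$, together with $\cL_i(K)$-definable sets cut out from finite fragments of $p$ and the opens $O_i$. The key observation, enabling everything, is that the hypothesis $F = \acl(F)$ forces $V$ to be geometrically integral over $F$, and that this in turn makes every realization of $p_i$ automatically $K$-generic in $V_K$ --- a fact I expect to be the main obstacle: without it, one would have to engineer $p_i$-realizations linearly disjoint from $K$ over $F$ via a free amalgamation argument, which would be considerably more technical.

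Concretely, fix $a \models p$ in some $K^\star \supsel K$. Any element of $\alg{F} \cap K^\star$ is a root of a nonzero polynomial over $F$, so lies in $\acl_i(F)$ for every $i \in I$, hence in $\acl_I(F) \cap K^\star = \acl(F) = F$ by \cref{char acl}. Thus $\alg F \cap F(a) = F$, the extension $F \leq F(V) = F(a)$ is regular (separability is automatic since $K^\star$ is perfect), and $V_K$ is geometrically integral over $K$. If now $b \models p_i$ in any $M \models T_{i,K}$, the $K$-Zariski closure $W$ of $b$ in $V_K$ is irreducible over $K$ and its projection to $V$ is dominant (since $b$ is $F$-generic in $V$), so $\dim W \geq \dim V = \dim V_K$, and the irreducibility of $V_K$ forces $W = V_K$, i.e., $b$ is $K$-generic in $V_K$.

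Next, for each $i \in I$ the type $p_i$ is consistent with $T_{i,K}$: $p$ is realized in $K^\star \supsel K$, which embeds over $K$ into a model of $T_i$ since $K^\star \models T_{i,\forall}$ and $T_i$ eliminates quantifiers. Combined with the hypothesis for $i \in I_\tau$, this yields a realization $b_i$ of $p_i$ in some $M_i' \models T_{i,K}$, with $b_i \in O_i$ when $i \in I_\tau$. In particular $V(M_i') \neq \emptyset$ (and smooth at $b_i$, since $b_i$ is generic in the geometrically integral $V$); since $T_i$ eliminates quantifiers the theory $T_{i,K}$ is complete, so this transfers to every model of $T_{i,K}$, and since $T = \bigvee_i T_i$ every model of $T_K$ is a model of some $T_{i,K}$, whence $V_K$ is totally $T$.

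By compactness and $K \subsel K^\star$, it suffices to find, for each finite $\Sigma \subseteq p$, some $c \in K$ satisfying $\Sigma$ and lying in $\bigcap_{i \in I_\tau} O_i(K)$. Split $\Sigma = \bigcup_i \Sigma_i$ into its $\cL_i$-parts, and set $X_i := \Sigma_i \cap O_i$ for $i \in I_\tau$ and $X_i := \Sigma_i$ otherwise; each $X_i$ is $\cL_i(K)$-definable, and $b_i \in X_i(M_i')$ by construction. By the genericity observation, $b_i$ lies outside every proper $K$-subvariety $W \subsetneq V_K$, so for every such $W$ the $\cL_i(K)$-sentence $\exists x\, (X_i(x) \wedge \neg W(x))$ holds in $M_i'$, and hence in $M_i$ by completeness of $T_{i,K}$; thus $X_i(M_i)$ is $K$-Zariski dense in $V_K$, and \cref{equiv SPTC}(iii) applied to $V_K$ and the $X_i$'s provides the required $c$.
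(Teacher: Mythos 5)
Your reduction has a genuine gap at the central step, which is precisely the point you flag as "the main obstacle" and then claim to have circumvented. You assert that if $b \models p_i$ in any $M \models T_{i,K}$, then $b$ is automatically $K$-generic in $V_K$, by arguing that the $K$-Zariski closure $W$ of $b$ dominates $V$ and hence has $\dim W \geq \dim V = \dim V_K$. But dominance of a morphism between varieties over \emph{different} base fields does not give this dimension inequality: $\dim W = \trdeg(K(b)/K)$ while $\dim V = \trdeg(F(b)/F)$, and the former can be strictly smaller. Concretely, take $F = \Qq$, $V = \Aa^1_F$, $K = F(\pi)$, and $b = \pi \in K \subseteq M$; then $b$ is $F$-generic in $V$ but its $K$-locus in $V_K$ is the single point $\{\pi\}$, so $W \subsetneq V_K$. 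Thus $b_i$ can very well land inside a proper $K$-subvariety, your "$X_i(M_i)$ is $K$-Zariski dense" conclusion fails, and the appeal to \cref{equiv SPTC}(iii) is unjustified. The free-amalgamation engineering you dismissed is in fact necessary: the paper uses \cref{open Z dense} (for $i \in I_\tau$, leveraging the topology) and \cref{FA Ti:reg} (for the other $i$) to build $K$-generic realizations by hand.

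A second, related problem: $p \in \TP(F)$ is a full $\Ld$-type, so a finite $\Sigma \subseteq p$ contains quantified formulas, which do not split into $\cL_i$-parts, and \cref{equiv SPTC}(iii) only produces points satisfying intersections of quantifier-free $\cL_i(K)$-definable sets. The paper resolves this by invoking \cref{char elem}: it enumerates $\acl(Fa)$ and reduces the problem to finding a tuple with the right \emph{quantifier-free} $\cL_I$-type over $F$, after which everything is genuinely quantifier-free and the \cref{equiv SPTC} machinery applies. Your observation that $F = \acl(F)$ forces $V$ to be geometrically integral is correct and used in the paper, but the route from there must pass through the algebraic closure of $Fa$ and the careful construction of $K$-generic witnesses, not a direct appeal to genericity-for-free.
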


\begin{proof}
Let \(a\models p\) (in some \(K^\star\supsel K\)). Let \(ac\) enumerate
\(\acl(Fa)\). By \cref{char elem}, it suffices to find \(de \qfequiv_F ac\) with
\(d\in \bigcap_{i\in I_\tau} O_i\). By compactness, we may assume \(c\) is
finite. Let \(q_i = \tp_i(ac/F)\), for every \(i\in I\), and \(W\) be the
geometrically integral algebraic locus of \(ac\). For every \(i\in I_\tau\), we
also denote by \(O_i\) the corresponding open subset of \(W\).

Let us fix some \(i\in I_\tau\). For every \(\cL_i(F)\)-definable \(X\), by
\textbf{(H$_i$)}, \(q_i\vdash X\) if and only if \(q_i\vdash \mathring{X}\cap
\sm{W}\), where \(\mathring{X}\) denotes the \(\tau_i\)-interior of \(X \cap W\)
in \(W\). In particular, \(\mathring{X}\cap \sm{W}\cap O_i \neq \emptyset\) and,
by \cref{open Z dense}, \(X\cap O_i\) is consistent with the \(K\)-generic of
\(W\). So, by compactness, we find \(a_ic_i \models q_i\) \(K\)-generic in \(W\)
with \(a_i \in O_i\). If \(i\nin I_\tau\), by \cref{FA Ti:reg} we can also find
\(a_ic_i\models q_i\) \(K\)-generic in \(W\). Let \(r_i := \tp_i(a_ic_i/K)
\supseteq q_i\). Note that since \(\tp(ac/F)\) is finitely satisfiable in \(K\),
\(\sm{W}(K)\neq \emptyset\). By \cref{equiv SPTC}, there exists \(de \models
\bigcup_i r_i\) in some \(K^\star\supsel K\). By construction, for every \(i\in
I_\tau\), we have \(d\in O_i\) and, for every \(i\in I\), we have \(de\models
p_i\), concluding the proof.
\end{proof}

Let \(\tau\) be the topology generated by the \(\tau_i\), for \(i\in I_\tau\).

\begin{theorem}\label{Density}
Let \(F = \acl(F)\leq K\), and \(X\) be \(\Ld(F)\)-definable and \(\tau\)-closed (respectively \(\tau\)-open) in its Zariski closure \(V\). Then \(X\) is quantifier free \(\bigcup_{i\in I_\tau} \cL_i(F)\)-definable.
\end{theorem}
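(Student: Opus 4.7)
My plan is to show that for any $a, b$ in a sufficiently saturated $K^\star \succeq K$ with $\qftp_i(a/F) = \qftp_i(b/F)$ for every $i \in I_\tau$, one has $a \in X \iff b \in X$; a compactness argument then yields the quantifier free $\bigcup_{i \in I_\tau}\cL_i(F)$-definability of $X$. The $\tau$-open case follows from the $\tau$-closed case by taking complements within $V$, so I restrict to the case $X$ is $\tau$-closed in $V = \zar{X}$.

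I first decompose $V$ into its $F$-irreducible components and treat the intersection of $X$ with each separately, and then induct on $\dim V$. For the base case $\dim V = 0$, the assumption $F = \acl(F)$ together with \cref{char acl} forces $V(K) \subseteq F^n$, so $X$ is a finite union of $F$-rational points and hence quantifier free $\Lrg(F)$-definable. For the inductive step, fix $a, b \in V(K^\star)$ with the same qf $\cL_i(F)$-type for every $i \in I_\tau$; in particular, they share the same $F$-Zariski locus $V' \subseteq V$. If $V' \subsetneq V$, then $X \cap V'$ is $\tau$-closed in $\zar{X \cap V'} \subseteq V'$, so the induction hypothesis (applied in strictly smaller dimension) yields $a \in X \iff b \in X$.

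Thus we may assume both $a$ and $b$ are $F$-generic in $V$. Applying \cref{loc dense} with the trivial opens $O_i = V$, the complete types $\tp(a/F)$ and $\tp(b/F)$ are realized in $K$; replacing $a, b$ by such realizations (which preserves the qf $\cL_i(F)$-types), we may assume $a, b \in K$, and suppose for contradiction that $a \in X$ and $b \notin X$.

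The crux is then a further application of \cref{loc dense}. Since $V \setminus X$ is $\tau$-open in $V$ and contains $b$, there exist $\cL_i(K)$-definable $\tau_i$-open sets $O_i \ni b$ for $i \in I_\tau$ with $\bigcap_{i \in I_\tau} O_i \cap X = \emptyset$ in $V(K)$. Setting $p = \tp(a/F)$, the equality $p_i = \qftp_i(a/F) = \qftp_i(b/F)$ combined with $b \in O_i \cap K$ shows that each $O_i$ is consistent with $p_i$ modulo $T_{i, K}$, witnessed by $b$ in any $M_i \models T_{i, K}$ extending $K$. Then \cref{loc dense} produces $a' \in K$ with $a' \models p$ and $a' \in \bigcap_{i \in I_\tau} O_i$; since $p \vdash x \in X$ while $\bigcap_i O_i \cap X = \emptyset$, this is the desired contradiction. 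The main subtlety — and the reason for first reducing to $a, b \in K$ — is that \cref{loc dense} requires the open sets to have parameters in $K$ rather than in a larger saturated extension.
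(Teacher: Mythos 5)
Your approach is essentially the paper's: take two points with the same quantifier-free $\bigcup_{i\in I_\tau}\cL_i(F)$-type, and show that \cref{loc dense} prevents one from being in $X$ and the other not. However, you introduce two complications that the paper's proof avoids, and one of them hides a genuine misstep.

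The paper's proof does not decompose $V$ or induct on dimension at all: given $x\in X(K)$ and $y\in V(K)$ with the same quantifier-free $\cL_i(F)$-types, it sets $W$ to be the common $F$-Zariski locus of $x$ and $y$ (which may well be a proper subvariety of $V$) and applies \cref{loc dense} to $W$ directly. Your case split into $V'\subsetneq V$ (handled by induction) and $V'=V$ (generic case) is therefore unnecessary; it's harmless, but it lengthens the argument and only recovers what applying \cref{loc dense} to $W$ gives in one shot.

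The more serious issue is your step reducing to $a,b\in K$. You apply \cref{loc dense} "with the trivial opens $O_i=V$" and claim this realizes $\tp(a/F)$ and $\tp(b/F)$ \emph{in $K$}. That is not what \cref{loc dense} gives: its conclusion is that the type $p$ together with the formula defining $\bigcap_i O_i$ is consistent, witnessed in some $K^\star\succ K$ (look at its proof, which finds $de$ in $K^\star$); with trivial $O_i$ this is vacuous. You similarly overstate later when you say $a'\in K$. Fortunately, neither overstatement is needed. You can simply run the whole argument inside a sufficiently saturated $K^\star\succeq K$ (which is again perfect and $\bSPTC$, both being first-order), take the $O_i$ to be $\cL_i(K^\star)$-definable, and apply \cref{loc dense} with $K^\star$ in place of $K$; the emptiness of $\bigcap_iO_i\cap X$ transfers back and forth by elementarity. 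With that fix, your argument is correct, and it is the same argument as the paper's, stated contrapositively (you produce an open disjoint from $X$ and derive a contradiction, whereas the paper shows $y$ lies in the $\tau$-closure of $X$).
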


\begin{proof}
Let assume that \(X\) is \(\tau\)-closed. Let \(x\in X(K)\) and \(y\in V(K)\) be
such that \(\bigcup_{i\in I_\tau} \tp_i(x/F) = \bigcup_{i\in I_\tau}
\tp_i(y/F)\). By compactness, it suffices to show that \(y\in X\). For every
\(i\in I_\tau\), \(O_i \subseteq V(M_i)\) \(\tau_i\)-open containing \(y\). Let
\(W\) be the locus of \(x\) (and hence \(y\)) over \(F\). By \cref{loc dense},
\(\tp(x/F)\) is consistent with \(W\cap \bigcap_i O_i\), In particular,
there exists \(z \in X\cap \bigcap_i O_i\). So \(y\) is in the \(\tau\)-closure
of \(X\), which is \(X\) itself.
\end{proof}

Considering the \(\tau\)-closure of a definable set, we have generalized the
"density theorem" of \cite[Theorem 3.17]{Mon-NTP2}:

\begin{corollary}\label{Density2}
Let \(F = \acl(F)\leq K\), and \(X\) be
\(\Ld(F)\)-definable with Zariski closure \(V\). Then there exists an integer
\(m\), \(\cL_i(F)\)-definable $\tau_i$-open sets \(Y_{ij}\subseteq V\), for
\(i\in I_\tau\) and \(j<m\), and a quantifier free $\Ld(F)$-definable set $Y$,
with \(\dim(Y)< \dim(X)\), such that
\[X \subseteq \bigcup_j \bigcap_i Y_{ij} \cup Y \text{ is \(\tau\)-dense.}\]
\end{corollary}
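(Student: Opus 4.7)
My plan is to apply \cref{Density} to the \(\tau\)-closure \(\bar X\) of \(X\) in \(V\), put the resulting quantifier-free description in disjunctive normal form, and then, in each \(\cL_i\)-factor, peel off the \(\tau_i\)-interior in \(V\) while throwing the remainders---which will have strictly smaller Zariski dimension---into the error set \(Y\).

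More concretely, I first take \(\bar X\) to be the \(\tau\)-closure of \(X\) in \(V\); since \(X\subseteq\bar X\subseteq V\), the Zariski closure of \(\bar X\) is still \(V\), so \(\bar X\) is \(\tau\)-closed in its own Zariski closure and \cref{Density} yields that \(\bar X\) is quantifier free \(\bigcup_{i\in I_\tau}\cL_i(F)\)-definable. Writing this description in disjunctive normal form, I may assume
\[\bar X = \bigcup_{j<m}\bigcap_{i\in I_\tau} B_{ij},\]
with each \(B_{ij}\subseteq V\) an \(\cL_i(F)\)-definable set. I would then let \(Y_{ij}\subseteq V\) be the \(\tau_i\)-interior of \(B_{ij}\) in \(V\); by definability of the \(V\)-topology \(\tau_i\), this set is still \(\cL_i(F)\)-definable and is \(\tau_i\)-open in \(V\) by construction.

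The main technical point is a dimension estimate for the remainders \(B_{ij}\sminus Y_{ij}\): by construction they have empty \(\tau_i\)-interior in \(V\), but by \textbf{(H$_i$)} any \(\cL_i\)-definable set has non-empty \(\tau_i\)-interior in its own Zariski closure, so the Zariski closure of \(B_{ij}\sminus Y_{ij}\) must be a proper subvariety of \(V\). Consequently \(\dim(B_{ij}\sminus Y_{ij}) < \dim V = \dim X\). This dimension drop is the single step where hypothesis \textbf{(H$_i$)} is really used, and it is the main obstacle to address.

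Finally, I would decompose \(B_{ij} = Y_{ij}\cup(B_{ij}\sminus Y_{ij})\) inside each intersection and distribute across the unions. The only cross-terms of full dimension are those of the form \(\bigcap_i Y_{ij}\); every other cross-term contains some factor of dimension \(<\dim X\) and hence has dimension \(<\dim X\) itself. Collecting all those "error" terms together into a single quantifier free \(\Ld(F)\)-definable set \(Y\) yields
\[\bar X = \bigcup_{j<m}\bigcap_{i\in I_\tau} Y_{ij} \cup Y \qquad \text{with } \dim Y < \dim X.\]
Since \(X\subseteq\bar X\) and \(X\) is, by definition, \(\tau\)-dense in its \(\tau\)-closure \(\bar X\), this is exactly the statement of \cref{Density2}.
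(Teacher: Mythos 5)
Your proof is correct and follows essentially the same route as the paper's: apply \cref{Density} to the $\tau$-closure, split each conjunct into its $\tau_i$-interior and its remainder, use \textbf{(H$_i$)} to see the remainder has strictly smaller dimension, and collect all remainder terms into $Y$. You simply spell out the "further dividing" step that the paper compresses into a single sentence.
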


\begin{proof}
Applying \cref{Density} to the $\tau$-closure of $X$, we find
\(\cL_i(F)\)-definable sets \(Y_{ij}\) such that \(X\subseteq \bigcup_j
\bigcap_i Y_{ij}\) is \(\tau\)-dense. Further dividing, we may assume that
\(Y_i\) is either \(\tau_i\)-open or has empty \(\tau_i\)-interior (and hence
has lower dimension by \textbf{(H$_i$)}). 
\end{proof}

\begin{remark}
The previous result generalizes the common pattern, that, when adding a
"generic" topology to some structure, the "open core" will consist only of sets
definable in the topology on its own. In fact, this result provides a broad
setting in which to consider these questions, which does not depend on the
initial structure and allows for more than one topology.
\end{remark}

\subsection*{Generic derivations}
To illustrate that point, let us consider the following example: let \(T_1 =
\DCF{0}\) the (Morleyized) theory of characteristic zero differentially closed
fields with a derivation, \(T_2 = \ACVF\), \(I := \{T_1,T_2\}\). Then
existentially closed valued fields with a derivation are \(\PIC\). Hence,
definable closed sets in such fields are definable using only the valued field
structure, recovering a special case of \cite[Theorem~3.1.11]{CKPoi-GenDer}.

The proof given here would extend immediately to bounded perfect \(\PAC\) fields
with independent valuations and generic commutative derivations (which are
\(\PIC\) for \(T_1 = \DCF{n}\) and \(T_2,\ldots T_m = \ACVF\)), if we knew that
this class is indeed closed under elementary extensions. \Cref{SPTC elem} does
not apply in that context, but a geometric axiomatization seems plausible, along
the lines of \cite{Tre-UC}.

There is another issue if we wish to consider fields that are not \(\PAC\),
which is our reliance on relational languages. If \(T_2 = \RCF_<\) and \(T_1 =
\DCF{0}\) (Morleyized), then existentially closed ordered differential fields
might not be \(\PIC\). Indeed, the order on a subfield of a differential field
might not extend to the whole field. However, the setup presented here can be
adapted to allow for non-relational \(\cL_i\); the main issue being, again, the
elementarity of the class.

\section{Amalgamation and Burden in \texorpdfstring{\(\bSPTC\)}{PTCId}} \label{Burden}

Recall that we now assume that \(I\) is finite and that \(T = \bigvee_{i\in I} T_i\).
In particular, any extension \(K\leq M\models T_I\) is totally \(T\). We now
work with \(I_\tau = I\) and therefore assume that \textbf{(H$_i$)} holds for
every \(i\in I\) (see Hypothesis \ref{H2i}) and \(\tau\) will denote the
topology generated by the \(\tau_i\) for all \(i\in I\). Finally, we fix
\(K\models\bSPTC\) perfect sufficiently saturated and homogeneous (see
\cref{bSPTC}) and \(M_i\models T_{i,K}\).

We start by proving that obstructions to 3-amalgamation only arise from the quantifier free structure.

\begin{thm}\label{Amalgamation}
Let \(F\leq K\) and $a_1, a_2, c_1, c_2, c\in K$. Suppose that \(\dcl_i(\emptyset)\subseteq \alg{F}\), for all \(i\), \(F = \acl(F)\), \(a_j\) enumerates \(A_j := \acl(Fa_j)\) and \(c_j\) enumerates \(C_j := \acl(Fc_j)\), for \(j=1,2\). Assume also that $A_1 \cap A_2 = F$, $c \aindep_{F} a_1 a_2$, $c_1 \equiv_{F} c_2$ and $c \qfequiv_{A_j} c_j$, for \(j = 1,2\). Then \[\tp(c_1/A_1) \cup \tp(c_2/A_2) \cup \qftp(c/A_1A_2)\] is consistent.
\end{thm}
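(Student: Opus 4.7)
The plan is to produce a realization $c''$ of the partial type in a sufficiently saturated elementary extension $K^\star \supsel K$, which is perfect $\bSPTC$ by \cref{mod compl}. By \cref{char elem}, realizing $\tp(c_j/A_j)$ in $K^\star$ is equivalent to constructing, for each $j$, an $\Ld$-embedding of $D_j := \acl(A_j c_j)$ over $A_j$ into $K^\star$ that sends $c_j$ to a common image $c''$. It therefore suffices to build an abstract $\Ld$-structure $L$ (which is then extended to a perfect $\bSPTC$ model via \cref{SPTC elem}) containing $A_1A_2$ with its $\Ld$-structure inherited from $K$, an element $c'$ with $c' \qfequiv_{A_1A_2} c$, and $\Ld$-isomorphic copies of $D_j$ over $A_j$ with $c_j \mapsto c'$; a final application of \cref{forth}, together with the saturation of $K^\star$, then produces the required $c''$.

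Field-theoretic skeleton: let $W$ be the Zariski locus of $c$ over $F$. Since $F$ is algebraically closed in $K$ by \cref{char acl}, $W$ is geometrically integral over $F$, and by $c \qfequiv_F c_j$ it is also the locus of each $c_j$. The hypothesis $c \aindep_F a_1 a_2$ combined with the regularity of $F \leq F(c)$ gives $F(c) \lindep_F A_1A_2$, so $W \times_F A_1A_2$ is geometrically integral over $A_1A_2$. Let $c'$ be a generic point of this variety in some algebraically closed field extending $K$; the $\Lrg(A_1A_2)$-isomorphism $A_1A_2(c') \to A_1A_2(c)$ sending $c' \mapsto c$ realises $\qftp(c/A_1A_2)$ on the field side. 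For each $j$, let $D_j'$ be the finite algebraic extension of $A_j(c')$ inside $\alg{A_1A_2(c')}$ obtained by transporting the field extension $A_j(c_j) \leq D_j$ along $c_j \mapsto c'$, and set $L := A_1A_2(c') \cdot D_1' \cdot D_2'$.

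The $\Ld$-structure on $L$ is prescribed piecewise, with compatibility on overlaps following from the hypotheses: $A_j = A_1A_2 \cap D_j'$ carries its $\Ld$-structure from $K$ uniformly, and $F(c') = D_1' \cap D_2'$ carries the same structure from either transport, since $c_1 \equiv_F c_2$ forces $\tp_i(c_1/F) = \tp_i(c_2/F)$ in every $\cL_i$, and these transport to the same $\cL_i$-structure on $F(c')$ via $c_j \mapsto c'$. To extend the partial $\cL_i$-structures to all of $L$ for each $i \in I$, we argue sequentially: first amalgamate $A_1A_2$ with $D_1'$ over $A_1$ via \cref{FA Ti:reg}, the extension $A_1 \leq A_1A_2$ being regular since $A_1 = \acl(Fa_1)$ is algebraically closed in $K$, and with the $\acl_i$-compatibility supplied by hypothesis \textbf{(A$_i$)}; then amalgamate the result with $D_2'$ over the algebraic overlap above $A_2(c')$ using \cref{FA Ti:alg}.

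The principal difficulty is precisely this sequential amalgamation. Since we assume only $A_1 \cap A_2 = F$ and not the stronger $A_1 \lindep_F A_2$, the field $A_1A_2$ is not a free amalgam of $A_1$ and $A_2$ over $F$, and a direct three-way free amalgamation over $F$ is unavailable. Proceeding in stages, the delicate point at the second stage is to verify that the $\cL_i$-structure produced on $A_1A_2 \cdot D_1'$ at the first stage agrees, on its intersection with $D_2'$ (a prescribed algebraic extension of $A_2(c')$), with the prescribed $\cL_i$-structure on $D_2'$; this ultimately reduces to a uniqueness statement for $\cL_i$-extensions through a specified separable algebraic extension, which is ensured by the boundedness hypothesis — $T_\fd$ pins down the Galois action — combined with $c_1 \equiv_F c_2$ aligning the two competing choices.
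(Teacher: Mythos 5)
Your high-level strategy matches the paper's: realize the type by constructing an abstract $\cL_I$-field on (essentially) $D_1D_2A$ and then embedding it into a saturated $K^\star\supsel K$ via \cref{better SPTC:reg} and \cref{char elem}. But you flag the hard step yourself and then fill it incorrectly.

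The "principal difficulty" you identify is real, but the claimed resolution --- "a uniqueness statement for $\cL_i$-extensions through a specified separable algebraic extension, ensured by the boundedness hypothesis" --- is false in general. Even when $T_\fd$ fixes the absolute Galois group, there is no uniqueness of $\cL_i$-structures extending a given one through a separable algebraic extension: for $T_i = \ACVF$ a valuation typically has several non-conjugate prolongations, and for $T_i = \RCF_<$ there may be several orderings. So the two transports along $c_1\mapsto c'$ and $c_2\mapsto c'$ have no reason to agree on all of $D_1'\cap D_2'$ merely because the Galois theory and $\tp_i(c_1/F)=\tp_i(c_2/F)$ coincide; you are implicitly assuming the overlap is no larger than $F(c')\cdot(\text{data already inherited from }K)$, which is precisely what must be proved.

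The paper avoids any alignment problem by proving that the intersections are exactly the right subfields before any amalgamation is attempted. Writing $C = F(c)$, $A = \acl(A_1A_2)$, $B_j = \acl(A_jc_j)$, $D_j = f_j(B_j)\subseteq\alg{A_jC}$, it deduces from $A_1\cap A_2 = F$ that $\alg{A_1}\cap\alg{A_2} = \alg{F}$ \emph{via} \cite[Lemma 2.1]{Cha-OmPAC}, and then, using \cite[Lemma 2.5.(2)]{Cha-OmPAC} together with the equalities $\alg{C}=\alg{F}C$, $\alg{A_1}=\alg{F}A_1$ coming from \cref{char gal}, that $\alg{CA_1}\cap D_2A = CA_1$ (so $CA_1\leq D_2A$ is regular and $D_1\lindep_{CA_1}D_2A$), and symmetrically. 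Only with these identities in hand do \cref{FA Ti:reg} and \cref{FA Ti:alg} apply without conflict: one amalgamates $D_j$ with the whole compositum $CA$ over $CA_j$, and then $D_1A$ with $D_2A$ over $CA$, so any overlap lies inside a piece whose $\cL_I$-structure is the one inherited from $K$ on both sides. Your proposal amalgamates over $A_1A_2$ (not $\acl(A_1A_2)$) and over $A_j$ rather than $CA_j$, and never establishes the needed linear disjointness. You also omit the verification that $A\leq D_1D_2A$ is regular (hence, with \textbf{(A$_i$)}, $\acl_i$-compatible), which is what licenses the final application of \cref{better SPTC:reg}. These are not routine bookkeeping but the substance of the proof.
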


This is a minor variation on \cite[Theorem\,3.21]{Mon-NTP2}. 

\begin{proof}
First note that, since $A_1A_2$ is a regular extension of $A_1$ and of $A_2$, by \cite[Lemma\ 2.1]{Cha-OmPAC}, we have that $\alg{A_1} \cap \alg{A_2} = \alg{(A_1\cap A_2)} = \alg{F}$. Note also that for any \(F\leq E\leq K\), by \cref{acl Ti}, \(\acl(E) = \alg{E}\cap K\).

Let \(C := F(c)\) and \(A := \acl(A_1A_2)\). Since $c \qfequiv_{a_j} c_j$, we have an \(\cL_I(A_j)\)-isomorphism \(f_j : A_jC_j \to A_jC\) sending \(c_j\) to \(c\). This map extends to an \(\Lrg(A_j)\)-embedding \(B_j := \acl(A_jC_j) \to \alg{(A_jC)}\). We endow \(D_j := f_j(B_j)\) with the \(\cL_I\)-structure making \(f_j\) into an \(\cL_I(A_j)\)-isomorphism. This structure extends the \(\cL_I\)-structure on \(A_jC\).

Note that both \(A_2\leq A\) and \(A_2\leq D_2 \isom B_2\) are regular and that,
since \(a_1 \aindep_{a_2} c\), we have \(A\aindep_{A_2} D_2\). It follows that
\(A\leq D_2A\), \(A_2 \leq D_2A\) and \(F\leq D_2A\) are regular. So \(D_2A
\lindep_{F} \alg{F}\) and hence \(D_2A \lindep_{CA_1} \alg{F}CA_1\). Moreover,
since \(\alg{C}\lindep_{\alg{F}}\alg{A_1}\alg{A_2}\), by
\cite[lemma\,2.5.(2)]{Cha-OmPAC} we have:
\[\alg{(CA_1)}\cap\alg{(CA_2)}\alg{(A_1A_2)} =
\alg{(\alg{C}(\alg{A_1}\cap\alg{A_2}))}\alg{A_1} = \alg{C}\alg{A_1}.\] By
\cref{char gal}, \(\Gal(C)\simeq\Gal(F)\) and hence, \(\alg{C} = \alg{F}C\).
Similarly, \(\alg{A_1} = \alg{F}A_1\) and thus \(\alg{C}\alg{A_1} =
\alg{F}CA_1\). So, \(\alg{(CA_1)} \cap D_2A = \alg{F}CA_1 \cap D_2A = CA_1\);
\emph{i.e.} \(CA_1\leq D_2A\) is regular. By symmetry, \(CA_2\leq D_1A\) is also
regular.

Since \(D_j \lindep_{CA_j} CA \leq D_kA\), for \(k\neq j\), by \cref{FA Ti:reg}, \(D_jA\) can be made into a model of \(T\) whose \(\cL_I\)-structure extends that of \(D_j\) and \(CA\). Since \(D_1\lindep_{CA_1} D_2 A\), we also have \(D_1A\lindep_{CA} D_2 A\) and, by \cref{FA Ti:alg}, \(D_1D_2A\) can be made into a model of \(T\) whose \(\cL_I\)-structure extends that of \(CA\), \(D_1\) and \(D_2\). Also, since \(CA_1\leq D_2A\) is regular and \(D_1\leq \alg{(CA_1)}\), it follows that \(D_1 \leq D_1D_2A\), and thus \(F\leq D_1D_2A\), are regular. So \(D_1D_2A\lindep_A \alg{F}A = \alg{A}\) and \(A\leq D_1D_2A\) is regular.

By \cref{better SPTC:reg}, there exists an \(\cL_I(A)\)-embedding \(g : D_1D_2A \to K^\star\supsel K\). Let \(c^\star = g(c)\). We have \(c^\star\qfequiv_{\cL_I(a_1a_2)} c\). Moreover, by \cref{char elem}, \(g\comp f_j\) is a \(\cL_I(A_j)\)-elementary isomorphism sending \(c_j\) to \(c^\star\).
\end{proof}

\subsection{Burden}

We will now study the burden of perfect bounded $\PIC$ fields in terms of the
burden of each theory $T_{i}$.

\begin{notation}
We denote by $\bdn_i$ the burden computed in models of $T_{i,K}$.
\end{notation}

\begin{lemma}\label{bdni}
Let  \(X\) be an \(\cL_i(M_i)\)-definable set of dimension \(d\). Then
\[\bdn_i(X)=\bdn_i(\Aa^d).\]
\end{lemma}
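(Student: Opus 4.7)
The plan is to prove both inequalities $\bdn_i(X) \leq \bdn_i(\Aa^d)$ and $\bdn_i(X) \geq \bdn_i(\Aa^d)$ by induction on $d$, the case $d = 0$ being trivial since $X$ is then finite. For $d > 0$, after decomposing $X$ along the irreducible components of its Zariski closure and applying the inductive hypothesis to the lower-dimensional pieces, I may assume that the Zariski closure $V$ of $X$ is irreducible of dimension $d$. The upper bound will come from a generically finite projection, the lower bound from the inverse function theorem combined with \cref{bounded cover}.

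For $\bdn_i(X) \leq \bdn_i(\Aa^d)$, a linear change of coordinates gives a coordinate projection $\pi : V \to \Aa^d$ that is generically finite, and hence finite-to-one on some Zariski-open dense $U \subseteq V$. Applying \cref{burden}.(6) to $\pi|_{X \cap U}$ gives $\bdn_i(X \cap U) \leq \bdn_i(\Aa^d)$. The complement $X \setminus U$ has dimension strictly less than $d$, so by induction, combined with the surjection $\Aa^d \to \Aa^{d-1}$ and \cref{burden}.(5), we get $\bdn_i(X \setminus U) \leq \bdn_i(\Aa^{d-1}) \leq \bdn_i(\Aa^d)$. The upper bound then follows from \cref{burden}.(4).

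For $\bdn_i(X) \geq \bdn_i(\Aa^d)$, the assumption $\dim X = d = \dim V$ means that $X$ is Zariski dense in $V$, so by \cref{Z dense Ti} (applied over $M_i$) its $\tau_i$-interior contains a smooth point $x$ of $V$. By \cref{smoothpoint} there is an étale map $f$ from an affine neighborhood of $x$ in $V$ to $\Aa^d$, and \cref{InvFuncThe} provides a $\tau_i$-open neighborhood $W \subseteq X$ of $x$ such that $f|_W : W \to W'$ is a $\tau_i$-homeomorphism onto a $\tau_i$-open $W' \subseteq \Aa^d$. Since $f|_W$ is a definable bijection, $\bdn_i(X) \geq \bdn_i(W) = \bdn_i(W')$, and it suffices to show that $\bdn_i(W') = \bdn_i(\Aa^d)$ for every non-empty $\tau_i$-open definable $W' \subseteq \Aa^d$.

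This last reduction is the crux. By \cref{bounded cover} there is a definable bounded open neighborhood $B$ of $0$ in $\Aa^1$ with $\Aa^1 = B \cup (B \setminus \{0\})^{-1}$; since inversion is a definable bijection, $\Aa^1$ is a definable image of two copies of $B$, and taking $d$-fold products, $\Aa^d$ is a definable image of finitely many copies of $B^d$, so \cref{burden} gives $\bdn_i(\Aa^d) \leq \bdn_i(B^d)$. Conversely, since $B$ is bounded and $W'$ is a non-empty $\tau_i$-open set, a suitable affine translate $a + c B^d$ of $B^d$ fits inside $W'$, yielding $\bdn_i(B^d) \leq \bdn_i(W') \leq \bdn_i(\Aa^d)$. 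The main technical point is precisely this last step: once one sees how to compare $\bdn_i(\Aa^d)$ to that of a bounded open $B^d$ via \cref{bounded cover}, the remainder of the argument assembles routinely from the tools already established.
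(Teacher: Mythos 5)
Your proof is correct, and it uses the same core idea as the paper's — namely, that \cref{bounded cover} gives \(\bdn_i(U) = \bdn_i(\Aa^d)\) for every non-empty definable \(\tau_i\)-open \(U\subseteq\Aa^d\), after which one transfers burden between \(X\) and an open subset of \(\Aa^d\) via definable maps. Where you diverge is in how the transfer is done. The paper's proof produces a single \(\cL_i(M_i)\)-definable finite-to-one map \(f : X \to \Aa^d\) with Zariski dense image, then uses \cref{burden}.(5)--(6) to get \(\bdn_i(X) = \bdn_i(f(X))\), and uses \textbf{(H\(_i\))} directly (without the inverse function theorem) to see that \(f(X)\) has non-empty \(\tau_i\)-interior, hence full burden. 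You instead split the argument into the two inequalities: for \(\leq\), you run a finite projection together with an induction on \(d\) to dispose of the boundary \(X\setminus U\); for \(\geq\), you invoke \cref{Z dense Ti} (or \textbf{(H\(_i\))}) to locate a smooth point in the \(\tau_i\)-interior of \(X\), and then \cref{smoothpoint} and \cref{InvFuncThe} to produce a definable homeomorphism from an open piece of \(X\) onto an open subset of \(\Aa^d\). This is slightly longer but arguably more self-contained: the paper's assertion that a finite-to-one map with Zariski dense image exists (and that this yields equality rather than two one-sided bounds) is stated without justification, whereas your version spells out exactly which piece of algebraic geometry is needed at each step and never needs the image of the whole map to be dense. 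One small caveat: your appeal to \cref{Z dense Ti} is technically slightly off since that lemma is stated for geometrically irreducible \(V\), but in the case \(F = M_i\) the only ingredient actually used is \textbf{(H\(_i\))} applied to \(X\cap\sm{V}\), which is exactly what the paper invokes; citing \textbf{(H\(_i\))} directly would be cleaner.
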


\begin{proof}
By \cref{bounded cover}, there exists an \(\cL_i(M_i)\)-definable bounded
$\tau_i$-open neighborhood \(O_i\) of \(0\) such that \(M_i \subseteq O_i\cup
(O_i\sminus\{0\})^{-1}\). Then \(\Aa^d(M_i) = \bigcup_{\epsilon \in 2^d}
\prod_{j<d} O_i^{\epsilon_j}\) where \(O_i^1 = O_i\) and \(O_i^0 =
(O_i\sminus\{0\})^{-1}\). Since \(\ \bdn_i(\prod_{j<d} O_i) = \prod_{j<d}
O_i^{\epsilon_j}\) for every \(\epsilon\), it follows that \(\bdn_i(\prod_{j<d}
O_i) = \bdn_i(\Aa^d)\). If \(U\subseteq \Aa^d\) is an \(\cL_i(M_i)\)-definable
open, it contains a definable subset of the form \(\prod_{j<d} U_j\) where \(U_j
\subseteq M_i\) is in (affine) \(\cL_i(M_i)\)-definable bijection with \(O_i\).
It follows that \[\bdn_i(\Aa^d) = \bdn_i(\prod_{j<d} O_i) = \bdn_i(\prod_{j<d}
U_i)\leq \bdn_i(U) \leq \bdn_i(\Aa^d).\]

Now, since \(\dim(X) = d\), there exists an \(\cL_i(M_i)\)-definable finite to one map \(f : X \to \Aa^d\) whose image is Zariski dense and hence has non-empty interior. So, we have \(\bdn_i(X) = \bdn_i(f(X)) = \bdn_i(\Aa^d)\).
\end{proof}

\begin{proposition}\label{lemsq}
Let $A =\acl(A)\subseteq K$, \(X\) be quantifier free \(\cL_I(A)\)-definable and
non-empty, $\phi(x,y)$ be an $\Ld$-formula and $(a_j)_{j\in \omega}\in K^y$ an
indiscernible sequence over $A$. Suppose that \(\phi(K,a_0)\) is $\tau$-dense in
$X$. Then \(\bigwedge_j \phi(x,a_j)\) is consistent.
\end{proposition}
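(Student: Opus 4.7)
The proof combines the open core theorem \cref{Density2} with the pseudo $I$-closed property \cref{equiv SPTC}(iii). The plan is to replace $\phi$ by a quantifier-free $\cL_I$-definable $\tau$-open approximation, then apply compactness and the $\SPTC$ axiom to produce a common realization.

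First, I apply \cref{Density2} to the $\Ld(\acl(Aa_0))$-definable set $\phi(K,a_0)\cap X$, after restricting $X$ to its top-dimensional piece so that its Zariski closure $V$ is geometrically integral and totally $T$. This yields $\cL_i(\acl(Aa_0))$-definable $\tau_i$-open sets $Y_{i,k,0}\subseteq V$ and a qf $\Ld$-definable set $Y_0'$ with $\dim(Y_0')<\dim(V)$ such that $\phi(K,a_0)\cap X$ is $\tau$-dense in $\bigcup_k\bigcap_i Y_{i,k,0}\cup Y_0'$. Since $\phi(K,a_0)\cap X$ is $\tau$-dense in $X$, which itself is $\tau$-dense in $V$ (using \cref{Z dense Ti} together with the fact that $X$ is Zariski dense in $V$), the set $\bigcup_k\bigcap_i Y_{i,k,0}$ is $\tau$-dense in $V$; by pigeonhole some $U_0:=\bigcap_i Y_{i,k_0,0}$ is already $\tau$-dense in $V$. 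A second application of \cref{Density2} to $U_0\setminus\phi(K,a_0)$, which has empty $\tau$-interior in $U_0$, shows it is contained in a qf $\Ld$-definable set $Y_0''$ of dimension $<\dim(V)$, so $U_0\setminus Y_0''\subseteq\phi(K,a_0)$. By $\Ld$-indiscernibility of $(a_j)$ over $A$, the translates $U_j:=\bigcap_i Y_{i,k_0,j}$ and $Y_j''$ satisfy $U_j\setminus Y_j''\subseteq\phi(K,a_j)$, each $U_j$ still being $\tau$-dense in $V$.

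By compactness, it suffices to show that $\bigcap_{j<n}U_j\setminus\bigcup_{j<n}Y_j''$ is non-empty for every $n$. Setting $Z_i:=\bigcap_{j<n}Y_{i,k_0,j}$, an $\cL_i(\acl(Aa_{<n}))$-definable $\tau_i$-open subset of $V$, \cref{equiv SPTC}(iii) applied to $V$ and the family $(Z_i)_{i\in I_\tau}$ yields $\bigcap_i Z_i(K)$ Zariski dense in $V$, provided each $Z_i(M_i)$ is $Ka_{<n}$-Zariski dense in $V$. Intersecting with the Zariski dense open $V\setminus\bigcup_{j<n}Y_j''$ then produces the required point.

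The main obstacle is establishing the Zariski density of each $Z_i$. The individual $Y_{i,k_0,j}(M_i)$ is $\tau_i$-open, and the crucial additional step is to show it is $\tau_i$-\emph{dense} in $V(M_i)$, which should be inherited from the $\tau$-density of $U_j$ in $V$ by transferring from $K$-points to $M_i$-points via \cref{PTC dense} (giving $\tau_i$-density of $K$ inside the appropriate separable closure $K_i\subseteq M_i$). Granting this, a standard topological induction—finite intersections of $\tau_i$-open $\tau_i$-dense subsets of an irreducible variety are $\tau_i$-open and $\tau_i$-dense, since intersecting a non-empty $\tau_i$-open successively with $\tau_i$-dense opens preserves non-emptiness—yields $Z_i(M_i)$ non-empty $\tau_i$-open, hence Zariski dense by \cref{open Z dense}. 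Carrying out the transfer from $\tau$-density on $K$-points to $\tau_i$-density on $M_i$-points for each $i$ is the most delicate step, and is where I expect the bulk of the technical work to concentrate.
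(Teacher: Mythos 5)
Your approach diverges substantially from the paper's, and it has several gaps, the most decisive of which is that you never invoke the 3-amalgamation theorem (\cref{Amalgamation}), which the paper proves immediately before this proposition precisely to power it.

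On the concrete gaps. First, the claim that $X$ is $\tau$-dense in $V(K)$ ``using \cref{Z dense Ti} together with the fact that $X$ is Zariski dense in $V$'' is false: Zariski density is much weaker than $\tau$-density (an interval is Zariski dense in $\Aa^1$ over a real closed field but not order-dense), and \cref{Z dense Ti} only relates Zariski density to having non-empty $\tau_i$-interior, never to $\tau_i$-density. Second, the pigeonhole step fails: a finite union $\bigcup_k\bigcap_i Y_{i,k,0}$ of $\tau$-open sets can be $\tau$-dense with no single summand $\tau$-dense (take $(-\infty,a_0)\cup(a_0,\infty)$ in the real case). Third, and most fundamentally, even if you could arrange each $Y_{i,k_0,j}$ to be $\tau_i$-open, the pieces move with the parameter $a_j$ and there is no canonical coherent choice of index $k_0$ across $j$ making $\bigcap_{j<n} Y_{i,k_0,j}$ non-empty; with a bad choice (e.g.\ alternating between $(-\infty,a_j)$ and $(a_j,\infty)$ along an increasing sequence), finite intersections are empty. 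Threading this needle --- choosing a coherent realization through the shifting quantifier-free pieces --- is exactly what the paper's argument does, and it cannot be done purely topologically. The paper's proof first extracts (by Erd\H{o}s--Rado and homogeneity) a single point $c\in\phi(K,a_0)$ whose algebraic closure $d$ has a \emph{uniform} quantifier-free type over the $a_J$, and then inductively doubles the number of $a_j$'s handled at each stage via \cref{Amalgamation}; it is the amalgamation that upgrades the quantifier-free coherence of $d$ to full-type coherence with $\phi$. Your reduction to \cref{equiv SPTC}(iii) applied to a single geometrically integral $V$ has no analogue of this inductive amalgamation, and the finite intersection $\bigcap_{j<n} U_j$ it would produce need not be non-empty.
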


\begin{proof}
By compactness, extend \((a_j)_{j\in\omega}\) to an \(A\)-indiscernible sequence
indexed by some sufficiently large cardinal \(\kappa\). For every finite
\(J\subseteq \kappa\), let \(a_J\) enumerate \(\acl(Aa_j \mid j\in J)\) --- in a
way compatible with inclusions: if \(f : J_1 \to J_2\) is an increasing
bijection, then for any \(J\subseteq J_1\), \(a_{J}\) and \(a_{f(J)}\) appear
in the same place in \(a_{J_1}\) and \(a_{J_2}\) respectively.

\begin{claim}\label{ind over gen}
There exists \(c\in X\) such that \(c\aindep_A \bigcup_{j<\omega} a_j\) and, for all finite \(J\subseteq \omega\), \(\tp(a_J/Ac)\) only depends on \(|J|\).
\end{claim}

\begin{proof}
Note that, by invariance, the Zariski closure of \(X\) is over \(A\) and hence we can find \(c\in X\) with \(c\aindep_A \bigcup_j a_j\). By iterated applications of Erdös-Rado and at the cost of reducing \(\kappa\) (exactly as when extracting an indiscernible sequence), we find \(J_0 \subseteq \kappa\), with order type \(\omega\), such that for all finite \(J \subseteq J_0\), \(\tp(a_{J}/Ac)\) only depends on \(|J|\). By homogeneity (and changing \(c\)), we may assume that \(J_0 = \omega\). 
\end{proof}

Note that if \(J_1,J_2 \subseteq \omega\) are disjoint of the same cardinality,
if \(e \in a_{J_1}\cap a_{J_2}\), then there is some \(n,m\) with \(a_{J_1,n} =
e = a_{J_2,m}\). Spreading \(J_1\) and \(J_2\) apart, we can find \(J_3\) such
that \(J_1\cup J_2\), \(J_1\cup J_3\) and \(J_2\cup J_3\) are ordered in the
same way. It follows that \(a_{J_1,n} = a_{J_3,m} = a_{J_2,n}\). So \(e\)
appears in the same place in all \(a_J\), with \(J\) of the given cardinality,
and \(\tp(a_J/Ace)\) only depends on \(|J|\). So we may assume that for disjoint
\(J_1,J_2 \subseteq \omega\), \(a_{J_1}\cap a_{J_2} = A\). 

\begin{claim}
There exists \(c \in \phi(K,a_0)\) such that \(c\aindep_A \bigcup_{j<\omega} a_j\) and, for all finite \(J\subseteq \omega\), \(\qftp(a_{J}/Ac)\) only depends on \(|J|\).
\end{claim}

\begin{proof}
Let \(\Sigma_i(x)\) be the (closure under consequence of the) partial type
expressing that \(\tp_i(a_{J}/Ax)\) only depends on \(|J|\). By compactness, it
suffices, given \(\psi_i(x) \in \Sigma_i(x)\) for every \(i\) and \(U \subseteq
V\) Zariski open over \(A(a_j)_{j<\omega}\), to find \(e \in  U\) realizing
\(\phi(x,a_0)\) and the \(\psi_i\). 

Let  \(c\) be such as in \cref{ind over gen} and let \(V\) be its algebraic locus over \(A\), which is also its algebraic locus over \(Aa_{j<\omega}\). Then \(c\) is in the \(\tau_i\)-interior of \(\psi_i(K)\) in \(V\) and, since \(c\in \psi_i(K)\cap X\), \(\psi_i(K)\) has non-empty \(\tau_i\)-interior in \(X\).  The existence of \(e\) as above now follows from the \(\tau\)-density of \(\phi(K,a_0)\) in \(X\) and the fact \(U\) is \(\tau\)-open (in \(V\) and hence in \(X\)). 
\end{proof}

Note that if \(E\) is an \(\cL_i(Ac)\)-definable finite equivalence relation, for some finite \(J_1,J_2 \subseteq \omega\) we have \(a_{J_1} E a_{J_2}\) and hence this holds for all \(J_1,J_2\). In other words, \(\tp_i(a_{J}/\acl_i(Ac))\) only depends on \(|J|\) and hence, since \(\acl(Ac) \subseteq \alg{A(c)}\subseteq\acl_i(Ac)\), for all \(i\), \(\qftp(a_{J}/\acl(Ac))\) only depends on \(|J|\). Let \(d\) enumerate \(\acl(Ac)\).

\begin{claim}
For all \(n>0\), there exists \(d_n \qfequiv_{a_{\{0,\ldots,2^n-1\}}} d\) such that \(K \models \bigwedge_{j<2^n}\phi(d_n,a_{j})\).
\end{claim}

\begin{proof}
We proceed by induction on \(n\) and let us assume we have found \(d_n\) ---
note that \(d_0 = d\) works. Let \(a^1 = a_{\{0,\ldots,2^n-1\}}\) and \(a^2 =
a_{\{2^n,\ldots,2^{n+1}-1\}}\), \(d^1 = d_n\) and \(d^2\) be such that \(d^2a^2
\equiv_A d^1a^1\), in particular, \(d^2\models \bigwedge_{2^n\leq
j<2^{n+1}}\phi(x,a_{j})\). Then \(a^1 \cap a^2 = A\), \(d \aindep_A a^1a^2\),
\(d^1 \equiv_A d^2\), \(d^1a^1 \qfequiv_A d a^1\) and \(d^2a^2 \equiv_A d^1a^1
\qfequiv_A d a^1 \qfequiv_A d a^2\). By \cref{Amalgamation}, we find \(d_{n+1}\)
such that \(d_{n+1} a^j \equiv_A d^j a^j\) --- and hence \(d_{n+1} \in
\bigcap_{j<2^{n+1}} \phi(K,a_j)\) ---  and \(d_{n+1}\qfequiv_ {a^1a^2} d\).
\end{proof}

So \(\bigwedge_j \phi(x,a_j)\) is indeed consistent.
\end{proof}

\begin{prop}\label{strong}
Let \(X\) be \(\Ld(K)\)-definable of dimension \(d\). If for every \(i\in I\),
\(\bdn_i(\Aa^d) \leq \kappa_i\) for some cardinal \(\kappa_i\), then
\[\bdn(X) \leq \sum_{i\in I} \kappa_i.\]
Moreover, if \(\bdn_i(\Aa^d) < \kappa\), where \(\kappa\) is an infinite cardinal, then
\[\bdn(X) < \sum_{i\in I} \kappa.\]
\end{prop}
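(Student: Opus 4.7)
The plan is to proceed by induction on $d = \dim(X)$. The base case $d=0$ is immediate: since $K\models\bSPTC$ is bounded, any zero-dimensional \(\Ld(K)\)-definable set is contained in \(\acl(A)\) for a finite parameter set \(A\), hence is finite, so \(\bdn(X)=0\).

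For the inductive step, start from an inp-pattern $(\phi_l(x, a_{l,j}))_{l<\lambda,\, j<\omega}$ witnessing $\bdn(X)\geq \lambda$. By Remark \ref{burden}(1) one may pass to mutually indiscernible rows over some base $A = \acl(A)$. The key reduction is to apply the density result, Corollary \ref{Density2}, to each row-formula $\phi_l(K, a_{l,0})$, with parameters from $\acl(Aa_{l,0})$, producing $\cL_i(\acl(Aa_{l,0}))$-definable $\tau_i$-open sets \(Y_{l,i,j}\) and an \(\Ld(\acl(Aa_{l,0}))\)-definable set $Y_l$ with $\dim Y_l < d$, such that
\[ \phi_l(K, a_{l,0}) \subseteq \bigcup_{j<m_l} \bigcap_{i\in I} Y_{l,i,j} \;\cup\; Y_l \quad\text{is $\tau$-dense.} \]
Using Remark \ref{burden}(2) iteratively to split each row along this finite union, one may assume every row falls into one of two cases: \emph{(a)} $\phi_l(K,a_{l,0}) \subseteq Y_l$ with $\dim Y_l < d$, or \emph{(b)} $\phi_l(K,a_{l,0})$ is $\tau$-dense in some quantifier free $\cL_I$-definable set $\bigcap_{i\in I} Y_{l,i,j_l}$. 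Let $L_a,L_b$ be the corresponding partition of $\lambda$.

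For rows of type \emph{(b)}, Proposition \ref{lemsq} is exactly the tool to prevent $\tau$-density from coexisting with inp-inconsistency — it forces $\bigwedge_j \phi_l(x,a_{l,j})$ to be consistent, contradicting the row's $k_l$-inconsistency. So every (b)-row must "use" some topology non-trivially in the sense that, after one more splitting, the row can be associated with a single $i(l)\in I$ so that the quantifier free set collapses to $Y_{l,i(l),j_l}$; the subpattern of rows with $i(l)=i$ then transfers, using the fact that these are $\cL_i$-definable $\tau_i$-open pieces in $\Aa^d$, to an inp-pattern in $\Aa^d(M_i)$, bounded by $\bdn_i(\Aa^d)\leq \kappa_i$. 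Summing over \(i\in I\) gives $|L_b|\leq \sum_i \kappa_i$. For rows of type \emph{(a)}, the mutual indiscernibility lets me localize the restricted subpattern into a single lower-dimensional fibre, say \(Y_{l_0}(a_{l_0,0})\) for a chosen \(l_0\in L_a\), of dimension $<d$; the inductive hypothesis then yields $|L_a|\leq \sum_i \bdn_i(\Aa^{d-1})\leq \sum_i \kappa_i$. Combining the two bounds and re-absorbing (so as not to double count, one actually argues with the case (a)-part sitting inside the overall pattern rather than independently) gives $\lambda\leq \sum_{i\in I}\kappa_i$. The moreover clause follows from the same argument by observing that a finite sum of cardinals each strictly less than an infinite $\kappa$ is strictly less than $\sum_{i\in I}\kappa$.

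The main obstacle is the parameter issue when invoking Proposition \ref{lemsq} in case \emph{(b)}: the quantifier free set \(\bigcap_i Y_{l,i,j_l}\) is $\cL_I(\acl(Aa_{l,0}))$-definable, whereas Proposition \ref{lemsq} requires it to be over the indiscernibility base $A$. This is resolved by enlarging $A$ to incorporate the "side" parameters \((a_{l',0})_{l'\neq l}\) and exploiting the mutual indiscernibility of the rows — $(a_{l,j})_{j<\omega}$ remains indiscernible over this larger base — together with a careful choice of row-specific base that absorbs $a_{l,0}$ as needed. Handling this bookkeeping, as well as the transfer step from (b)-rows to inp-patterns in each $T_i$ (which relies on the fact that the sets \(Y_{l,i,j_l}\) are \(\tau_i\)-open and on Lemma \ref{bdni}), is where the bulk of the technical work lies.
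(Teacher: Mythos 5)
Your proposal follows the same broad route as the paper: induction on $d$, mutual indiscernibility, the density result \cref{Density2} to decompose each row into a quantifier-free part plus a lower-dimensional part, and \cref{lemsq} as the engine that prevents $\tau$-density from coexisting with row-inconsistency. However, there are two genuine gaps in the bookkeeping which the paper resolves differently.

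First, the treatment of lower-dimensional rows. You try to bound $|L_a|$ by $\sum_i \bdn_i(\Aa^{d-1})$ separately, but the localization you describe --- restricting to a fibre $Y_{l_0}$ of dimension $<d$ --- actually pushes the \emph{entire} remaining pattern (not just the (a) rows) into that fibre, so the estimate double-counts with $L_b$. You notice this ("one actually argues with the case (a)-part sitting inside the overall pattern") but leave it unresolved. The paper sidesteps the (a)/(b) split entirely: if \emph{any} component $Y_{l_0 j_0 i_0}$ has dimension $<d$, then by path-consistency of the original pattern the other $\lambda$ rows ($\lambda+1=\kappa$) form an inp-pattern inside $Y_{l_0 j_0 i_0}$, and the inductive hypothesis yields an immediate contradiction; hence one may assume all $Y_{lji}$ are full-dimensional (and, after one more application of Remark~\ref{burden}(2), $\tau_i$-open). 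There is no case split and no double-count.

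Second, the parameter issue when invoking \cref{lemsq}. Enlarging $A$ by the side parameters $(a_{l',0})_{l'\neq l}$ does nothing to help, because the set you want to feed into \cref{lemsq} depends on $a_{l,0}$ itself (and indeed changes with $j$), and that dependence is precisely what the lemma forbids. The paper's fix is of a different nature: having isolated a row $l$ for which $\bigwedge_j Y_{lji}$ is \emph{consistent} in every $T_i$ (which exists because, for each $i$, the "bad" rows where $\bigwedge_j Y_{lji}$ is inconsistent form an inp-pattern in $T_i$ of depth at most $\kappa_i$, and $\kappa > \sum_i\kappa_i$), compactness produces an $\cL_i(K)$-definable $\tau_i$-open $U_i\subseteq\bigcap_j Y_{lji}$. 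The set $U=\bigcap_i U_i$ is then a \emph{single} quantifier-free set over parameters $B\subseteq K$ independent of $j$, one passes to a $B$-indiscernible subsequence, and only then does \cref{lemsq} apply. This is the crucial step that your sketch does not supply.

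Finally, the phrase "the quantifier free set collapses to $Y_{l,i(l),j_l}$" is misleading: nothing collapses; rather, for a bad row one argues that some $i$-component alone already witnesses column-inconsistency in $M_i$. The underlying counting idea (assign each bad row the $i$ that witnesses inconsistency and transfer to an inp-pattern in $T_i$) is the contrapositive of the paper's "find a good row" step and is essentially sound --- it is the two points above that need to be filled in.
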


\begin{proof} We proceed by induction on \(d\) and assume \(K\) is sufficiently saturated. Let \(\phi_l(x,a_{lj})_{l<\kappa,j<\omega}\) be an inp-pattern of depth \(\kappa\) in \(X\) --- where \(\kappa > \sum_i \bdn_i(\Aa^d)\) in the first case. Let \(A =\acl(A) \leq K\) be such that the \(\phi_l\) and \(X\) are all over \(A\). We may assume that the \((a_{lj})_{j<\omega}\) are mutually indiscernible over \(A\). Let \(V\) be the Zariski closure of \(X\). 

By \cite[Lemma\ 7.1]{Che-NTP2} (see \cref{burden}(2)), \cref{Density} and indiscernability, we find (uniformly) \(\cL_i(a_{lj})\)-definable \(Y_{lji}\subseteq V\) such that \(\phi_l(K,a_{lj})\) is \(\tau\)-dense in \(Y_{lj} := \bigcap_i Y_{lji}\). If any \(Y_{l_0j_0i_0}\) has dimension smaller than \(d\), then, by induction, \(\bdn(Y_{l_0j_0i_0}) \leq \sum_{i\in I} \lambda_i\), where \(\lambda_i + 1 = \kappa_i\). However, the \(\phi_l(x,a_{lj})_{l\neq l_0,j<\omega}\) is an inp-pattern of depth \(\lambda\) in \(Y_{l_0j_0i_0}\), where \(\lambda + 1 = \kappa\). So \(\kappa = \lambda +1 \leq \bdn(Y_{l_0j_0i_0}) + 1 \leq \sum_{i\in I} \lambda_i +1 \leq \sum_{i\in I} \kappa_i < \kappa\), a contradiction. In the second case, by induction, we would have \(\kappa \leq \bdn(Y_{l_0j_0i_0}) < \kappa\), which is also a contradiction.

So we may assume that all the \(Y_{lji}\) have dimension \(d\). In particular, they have non-empty \(\tau_i\)-interior in \(V\). Since the border of \(Y_{lji}\) in $V$ has lower dimension, by \cite[Lemma\ 7.1]{Che-NTP2} and induction, we may further assume that the \(Y_{lji}\) are \(\tau_i\)-open in \(V\).

Since \(\phi_l(x,a_{lj})\) is an inp-pattern, for all $f: \kappa \to \omega$ we have that \(\bigwedge_{l} Y_{l,f(l),i}\) is consistent. It follows that there are at most \(\kappa_i\) many \(l\) (respectively strictly less than \(\kappa\) in the second case) such that \(\bigwedge_l Y_{lji}\) is inconsistent. In both cases, it follows that we can find an \(l\) such that, for all \(i\), \(\bigwedge_j Y_{lji}\) is consistent.

Since the \(Y_{lji}\) are \(\tau_i\)-open (in \(V\)), by compactness,
\(\bigwedge_j Y_{lji}\) contains some \(\cL_i(K)\)-definable \(\tau_i\)-open
\(U_i \subseteq V\). Let \(U = \bigcap_i U_i\). Then, for every \(j\),
\(\phi(K,a_{lj})\) is \(\tau\)-dense in \(U\). Let \(B\supseteq A\) be such that
\(U\) is quantifier free \(\cL_I(B)\)-definable. We may assume that \(a_{lj}\)
is \(B\)-indiscernible. By \cref{lemsq}, \(\bigwedge_j \phi(K,a_{lj})\) is
consistent --- contradicting that \(\phi_l(x,a_{lj})_{l<\kappa,j<\omega}\) is an
inp-pattern.
\end{proof}

We also have the following qualitative corollary:

\begin{cor} \label{PTCNTP2}
Suppose that, for all $i \in I$, $T_i$ is $NTP_2$
(respectively strong), then any perfect $K\models\bSPTC$ also is.
\end{cor}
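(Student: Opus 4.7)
My plan is to deduce both halves of the corollary directly from Proposition~\ref{strong}, by reading off the correct cardinal inputs: there is no new combinatorics or algebra needed beyond what has already been established there. The entire content of the corollary is a qualitative repackaging of the quantitative bound proved in that proposition.

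First I would fix an arbitrary $\Ld(K)$-definable set $X$ and set $d = \dim(X)$. For the $\NTP_2$ case, the assumption that each $T_i$ is $\NTP_2$ provides a cardinal $\kappa_i$ bounding the burden of every formula in models of $T_{i,K}$ (since adding the constants in $K$ preserves $\NTP_2$); in particular $\bdn_i(\Aa^d) \leq \kappa_i$, and this bound can be chosen independently of $d$. The first inequality of Proposition~\ref{strong} then yields $\bdn(X) \leq \sum_{i \in I} \kappa_i$, which is a fixed cardinal since $I$ is finite. Because this bound does not depend on $X$, we conclude that $K$ is $\NTP_2$.

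For the strong case, the hypothesis on each $T_i$ gives $\bdn_i(\Aa^d) < \aleph_0$ for every $d$. Applying the moreover clause of Proposition~\ref{strong} with $\kappa = \aleph_0$ yields
\[
\bdn(X) < \sum_{i\in I} \aleph_0 = \aleph_0,
\]
again using that $I$ is finite. Hence every $\Ld(K)$-definable set has finite burden, so $K$ contains no inp-pattern of infinite depth and is therefore strong.

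I do not foresee any real obstacle: the substantive work has been packaged into Proposition~\ref{strong}, and both halves of the corollary amount to pure bookkeeping from its statement. The only small point worth flagging is, in the $\NTP_2$ case, that the bound $\kappa_i$ on $\bdn_i(\Aa^d)$ can be chosen uniformly in $d$, which is precisely what $\NTP_2$ of $T_i$ (as a bound on \emph{all} inp-pattern depths in $T_{i,K}$) provides.
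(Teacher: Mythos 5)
Your proposal is correct and matches the paper's (unstated) intent: the corollary is a direct qualitative reading of Proposition~\ref{strong}, with the first inequality giving \(\NTP_2\) and the ``moreover'' clause giving strength; the only additional observations needed are that \(I\) is finite and that naming the constants from \(K\) preserves \(\NTP_2\) (so the uniform bound on \(\bdn_i\) survives the passage to \(T_{i,K}\)), both of which you flag. One small remark: in the strong case you pass from ``no inp-pattern of infinite depth in \(T_i\)'' to ``\(\bdn_i(\Aa^d) < \aleph_0\) for every \(d\)''; strictly as a cardinal-valued supremum these are not identical (the finer \(\mathrm{Card}^\star\)-valued burden cited in the paper is what cleanly separates ``\(<\aleph_0\)'' from ``\(=\aleph_0\) but unattained''), but this is exactly the reading the proof of Proposition~\ref{strong} uses, so your deduction is faithful to the paper.
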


The bound in \cref{strong} is actually tight (although the argument is
surprisingly more involved than we expected at first).

\begin{lemma}\label{nice pattern}
If, for some \(i\in I\), \(\bdn_i(\Aa^d) \geq \kappa\) for some cardinal \(\kappa\), then there exists an inp-pattern \((\phi_l(x,a_{lj}))_{l<\kappa,j<\omega}\) of depth \(\kappa\) in any \(\tau_i\)-open \(\cL_i(K)\)-definable set \(X\subseteq \Aa^d\)
such that \(a_{lj}\in K\) and \(\phi_l(x,a_{lj})\) defines a \(\tau_i\)-open set.
\end{lemma}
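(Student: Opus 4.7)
The plan is to build the pattern in three stages. First, by \cref{bdni} we have $\bdn_i(X) = \bdn_i(\Aa^d) \geq \kappa$, so there exists an inp-pattern $(\psi_l(x, b_{lj}))_{l < \kappa, j < \omega}$ of depth $\kappa$ in $X$ with parameters $b_{lj}$ in some $M_i^\star \succeq M_i$. By \cref{burden}(1), I may further arrange that the sequences $(b_{lj})_j$ are mutually indiscernible over $K$.

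Second, I refine each row so its formula defines a $\tau_i$-open subset of $\Aa^d$. Write $\psi_l(x, y) = \mathring{\psi}_l(x, y) \vee \partial\psi_l(x, y)$, where $\mathring{\psi}_l(x, y)$ is the uniformly definable formula cutting out the $\tau_i$-interior of $\psi_l(M_i^\star, y)$ in $\Aa^d$; this is definable since $\tau_i$ is a definable $V$-topology. By hypothesis \textbf{(H$_i$)}, $\partial\psi_l(M_i^\star, b_{lj})$ has empty $\tau_i$-interior, so it lies in a proper subvariety of $\Aa^d$. \Cref{burden}(2) produces a choice function $g$ with $g(l) \in \{\mathring{\psi}_l, \partial\psi_l\}$ such that $(g(l)(x, b_{lj}))_{l,j}$ is still an inp-pattern of depth $\kappa$. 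When $g(l) = \mathring{\psi}_l$ for every $l$, the refinement is complete and I set $\phi_l := \mathring{\psi}_l$. Otherwise, for rows where the boundary branch is chosen, I replace $\partial\psi_l(x, b_{lj})$ by a definable $\tau_i$-open tubular neighborhood of the ambient proper subvariety; mutual indiscernibility of $(b_{lj})_j$ together with the original $k_l$-inconsistency, combined with the Henselianity of $\tau_i$, let me pick the tube's radius small and uniformly in $j$ so that the row remains $k_l$-inconsistent, while a compactness argument preserves path consistency.

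Third, to transfer parameters into $K$, observe that the joint $\cL_i$-type of $(b_{lj})_{l,j}$ over $K$ amounts to placing each $b_{lj}$ generically inside certain $\tau_i$-open $\cL_i(K)$-definable sets. Since $K \models \bSPTC$ is sufficiently saturated, \cref{free ec}.(ii).(d) together with the approximation theorem \cref{approx var} let me realize this joint type by a tuple $(a_{lj})_{l,j} \in K$; the resulting family inherits both row $k_l$-inconsistency and path consistency, as these depend only on the $\cL_i$-type over $K$. The main obstacle I anticipate lies in the second stage: ensuring that the boundary branch of the Chernikov decomposition can always be replaced by a $\tau_i$-open formula without sacrificing depth $\kappa$. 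This will require a delicate tubular-neighborhood construction balancing the mutual indiscernibility of the parameter sequences against the $k_l$-inconsistency of each row.
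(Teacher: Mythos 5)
Your overall skeleton (find the pattern via \cref{bdni}, refine to open formulas, push parameters into $K$) matches the paper's, but both of the two substantive steps have genuine gaps, and your own caveats correctly identify where the trouble lies.

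The refinement step. You acknowledge that the "tubular neighborhood" replacement for the boundary branch is the main obstacle, and indeed it is. If $\partial\psi_l(M_i^\star,b_{lj})$ is a proper lower-dimensional set, the original row is $k_l$-inconsistent precisely because intersections of $k_l$ thin sets vanish; enlarging each to an open tube can, and generically will, create intersections (think of hyperplanes through a common point). Neither mutual indiscernibility nor Henselianity gives you a uniform radius making the tubes disjoint, nor does it guarantee that shrinking the tubes preserves path-consistency with the other rows. The paper does something different and cleaner: it argues by induction on the ambient dimension $d$. If some $\phi_l(M_i,a_{lj})$ has dimension $<d$, then (by \cref{burden}.(6)) the subpattern living inside it gives $\bdn_i(\Aa^{d-1})\geq\lambda$ with $\lambda+1=\kappa$; the induction hypothesis produces an open pattern of depth $\lambda$ in $\prod_{k<d-1}X_k$, and one supplementary row consisting of a uniformly definable disjoint family of open subsets of $X_{d-1}$ restores depth $\kappa$. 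No tube is ever needed: the lower-dimensional row is discarded, not enlarged.

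The transfer of parameters to $K$. The claim that "the joint $\cL_i$-type of $(b_{lj})$ over $K$ amounts to placing each $b_{lj}$ generically inside certain $\tau_i$-open $\cL_i(K)$-definable sets" is where the argument breaks. The $b_{lj}$ live in a model $M_i^\star$ of $T_i$ and can satisfy nontrivial algebraic and $\acl_i$-relations over $K$; there is no a priori reason for them to be algebraically independent. Consequently \cref{approx var} and \cref{free ec} are not directly applicable: those apply to $K$-generic points of geometrically integral totally $T$ varieties, and the algebraic locus of $(b_{lj})$ over $K$ need not be such a variety. The paper's proof spends its entire second half precisely on this point, using a two-stage genericity argument: first replacing each $a_{lj}$ with the union over its $\cL_i(A_l c_{lj})$-conjugates (using the $\omega+\omega+\omega^\star$ middle-segment trick to show the set of conjugates forms a complete type), so that the parameters may be taken generic in some $\Aa^m$; then a second conjugation over a transcendence basis to ensure the base of parameters is itself $K$-generic. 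Only then does \cref{equiv SPTC}.(ii) apply to realize the $\cL_i(K)$-type inside $K$. Without this genericity reduction the final appeal to saturation and approximation does not go through.
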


\begin{proof}
Note first that, at the cost of reducing \(X\), we may assume that \(X = \prod_{k < d} X_k\) is a product of open (bounded) subsets of \(\Aa^1\). By \cref{bdni}, we find \(M_i\models T_{i,K}\) and \((\phi_l(x,a_{lj}))_{l<\kappa,j<\omega}\) an inp-pattern of depth \(\kappa\) in \(X(M_i)\). Writing \(\phi(M_i,a_{lj})\) as the union of its interior and a lower dimension set, by \cref{burden}.(2), we may assume that either \(\phi(M_i,a_{lj})\) is \(\tau_i\)-open or \(\dim(\phi(M_i,a_{lj})) < d\). In the latter case, it follows by \cref{burden}.(6), that \(\bdn(\Aa^{d-1})\geq \lambda\), where \(\lambda + 1 = \kappa\). By induction, we find an inp-pattern of depth \(\lambda\) in \(\prod_{k<d-1} X_k\) with \(\psi_l(M_i,c_{lj})\) consisting of \(\tau_i\)-open sets. Together with a uniformly definable family of disjoint \(\tau_i\)-open subsets of \(X_{d-1}\), they yield an inp-pattern of depth \(\kappa\) in \(X\).

So we may assume that the \(\phi_l(M_i,a_{lj})\) are \(\tau_i\)-open and there only remains to prove that we can choose the \(a_{lj}\) in \(K\).
Let \(c_{lj}\) be generic in some \(\Aa^d\) such that \(a_{lj} \in \acl(c_{lj})\). We may assume that \(a_{lj}\) enumerates \(\acl_i(c_{lj})\), that \((c_{lj}a_{lj})_{j}\) are mutually indiscernible sequences and that it is middle part of a sequence indexed by \(\omega + \omega + \omega^\star\). Let \(A_l\) consist of the union of \(K\), the \(\omega\) initial segment and the \(\omega^\star\) final segment of the sequence. For each j, let \(E_{lj}\) be the set of \(\cL_i(A_lc_{lj})\)-conjugates of \(a_{lj}\).

\begin{claim}\label{complete E}
For every \(j_0<\omega\), \(E_{lj_0}\)  is a complete type over \(A_l(c_{lj})_{j}(a_{lj})_{j\neq j_0}\).
\end{claim}

\begin{proof}
Assume there exists \(E \subset E_{lj}\) non-empty defined over
\(A_l(c_{lj})_{j}(a_{lj})_{j\neq j_0}\). Then, by indiscernability, moving the
parameters to the initial and final segment, such a set also exists with
parameters in A.
\end{proof}

Let \(X_{lj} = \bigcup_{a\in E_{lj}} \phi_l(x,a)\). If the \(X_{lj}\) are
consistent, then, for every \(k\) and \(j<k\) there exists \(e_{lj} \in E_{lj}\)
such that \(\bigwedge_{j<k} \phi_l(x,e_{lj})\) is consistent. But, by
\cref{complete E}, \(\prod_{j<k} E_{lj}\) is a complete type. This would imply
that, \(\bigwedge_{j<k} \phi_l(x,a_{lj})\) is consistent, contradicting that the
\(\phi_l(x,a_{lj})\) form an inp-pattern. It follows that the \(X_{lj}\) form an
inp-pattern (of depth \(\kappa\) in \(X\)).

So, at the cost of allowing \(\phi_l(x,y)\) to be an \(\cL_i(A)\)-formula for
some \(A\subseteq M_i\) containing \(K\), we may assume the \(a_{lj}\) to be
generic in some \(\Aa^m\) (over \(A\)). Let \(C\) be a transcendence basis of
\(A\) over \(K\) and \(b\in A\subseteq\acl_i(C)\) be such that \(\phi_l(x,y) =
\psi(x,y,b)\), where \(\psi\) is an \(\cL_i(K(C))\)-formula. As above, possibly
enlarging \(C\), we may assume that the set \(E\) of \(\cL_i(K(C))\)-conjugates
of \(b\) is a complete type over \(K(C,c_{lj})\). Then, for every \(e\in E\),
the \(\psi(x,a_{lj},e)\) are inconsistent. It follows that the
\(\theta(x,a_{lj}) = \bigvee_{e\in E} \psi(x,a_{lj},e)\) are inconsistent and
hence form an inp-pattern (of depth \(\kappa\) in \(X\)). So we may assume that
\(A = K(C)\) is generated over \(K\) by a (finite) set of algebraically
independent elements. Then \(C(a_{lj})_{l<\kappa,j<\omega}\) is a tuple of
algebraically independent elements over \(K\), in other words any finite
sub-tuple is \(K\)-generic in the affine space of the right dimension. It
follows, \emph{e.g.} by \cref{equiv SPTC}.(ii), that its \(\cL_i(K)\)-type can
be realized in \(K\).
\end{proof}

\begin{proposition}
Let \(X\) be \(\Ld(K)\)-definable of dimension \(d\).
If, for every \(i\in I\),  \(\bdn_i(\Aa^d) \geq \kappa_i\) for some cardinal
\(\kappa_i\), then
\[\bdn(X) \geq \sum_{i\in I} \kappa_i.\]
\end{proposition}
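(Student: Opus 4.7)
The plan is to assemble, one for each $i \in I$, the inp-patterns of depth $\kappa_i$ furnished by \cref{nice pattern} into a single inp-pattern of depth $\sum_i \kappa_i$ in $X$, with row inconsistencies inherited separately from each $T_i$ and column consistency supplied by the cross-topological approximation theorem \cref{approx var} combined with the $\tau$-density of $X$ in a suitable envelope.

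We begin with two reductions. Since $K$ is perfect, Noether normalization provides a generically étale finite surjective morphism $\pi$ from (a Zariski-open subset of) the Zariski closure of $X$ onto $\Aa^d$, and by \cref{burden}(5) applied to $\pi\restriction_X$ we may replace $X$ by its image and assume $X \subseteq \Aa^d$. Next, by \cref{Density2}, $X \subseteq \bigcup_j \bigcap_i Y_{ij} \cup Y$ with $\dim(Y) < d$, and $X$ is $\tau$-dense in this union. Since $\dim(X) = d$, some component $U := \bigcap_i U_i$ (with the $U_i$ being the $Y_{ij_0}$ for the right $j_0$) must have dimension $d$, and the argument above shows $X \cap U$ is $\tau$-dense in $U$. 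Replacing $X$ by $X \cap U$ (which can only decrease its burden), we assume $X$ is $\tau$-dense in $U = \bigcap_{i\in I} U_i \subseteq \Aa^d$, where each $U_i$ is an $\cL_i(K)$-definable $\tau_i$-open set of dimension $d$.

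For each $i$, apply \cref{nice pattern} inside $U_i$ to obtain an inp-pattern $(\phi_{l,i}(x, a_{l,i,j}))_{l < \kappa_i, j < \omega}$ of depth $\kappa_i$ with $a_{l,i,j} \in K$ and each $\phi_{l,i}(x, a_{l,i,j})$ defining a $\tau_i$-open subset of $U_i$. Take as candidate inp-pattern of depth $\sum_i \kappa_i$ the family of formulas $\phi_{l,i}(x, a_{l,i,j}) \wedge \chi_X(x)$, where $\chi_X$ is any $\Ld(K)$-formula defining $X$; row inconsistency is then immediate. For column consistency, fix $f_i : \kappa_i \to \omega$ for each $i$ and set $W_i := \bigcap_l \phi_{l,i}(x, a_{l,i,f_i(l)})$, a non-empty $\tau_i$-open $\cL_i(K)$-definable subset of $U_i$. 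A density argument---the completion $\compl{(K,\tau_i)}$ carries a $T_i$-structure extending $K$ in which $W_i$ is non-empty, and $K_i$ is $\tau_i$-dense in this completion---yields $W_i \cap \Aa^d(K_i) \neq \emptyset$; \cref{approx var} applied to the geometrically integral smooth variety $V = \Aa^d$ then furnishes a $K$-point in $\bigcap_i W_i$. Since $\bigcap_i W_i \subseteq U$ is a non-empty $\tau$-open set and $X$ is $\tau$-dense in $U$, the $\Ld(K)$-definable set $X \cap \bigcap_i W_i$ is consistent, hence realized in $K$ by saturation, completing the column-consistency verification.

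The main obstacle is the final coordination: column consistency must be witnessed by a $K$-rational point lying \emph{inside} $X$, not merely inside the ambient envelope $\bigcap_i W_i \subseteq U$. This forces us to combine the two density phenomena at our disposal---the cross-topological approximation of \cref{approx var} providing a $K$-point of $\Aa^d$ in $\bigcap_i W_i$, and the $\tau$-density of $X$ in $U$ inherited from \cref{Density2}, which allows that point to be slid into $X$. A secondary but nontrivial technical concern is the Noether normalization reduction in positive characteristic, for which perfectness of $K$ supplies the required separating transcendence basis.
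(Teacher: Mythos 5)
Your argument is correct and follows essentially the same route as the paper's: reduce to $X\subseteq\Aa^d$, apply \cref{Density2} to pick out a $d$-dimensional envelope $\bigcap_i U_i$ in which $X$ is $\tau$-dense, import the per-topology inp-patterns from \cref{nice pattern}, and check column consistency by combining $\tau$-density with the cross-topological approximation coming from the \(\PIC\) property.

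Two small remarks, neither a gap. First, the reduction to $\Aa^d$ does not need Noether normalization or perfectness: since $\dim(X)=d$, some coordinate projection $\pi:\Aa^n\to\Aa^d$ is dominant on $X$, and \cref{burden}.(5) applied to $\pi\restriction_X$ already lets you replace $X$ by $\pi(X)\subseteq\Aa^d$ of dimension $d$. Second, for the column consistency you route through the completion $\compl{(K,\tau_i)}$ and then \cref{approx var}; this is more machinery than needed and the completion step is somewhat informal (it is not immediate that the completion carries a $T_i$-structure, nor why that would place a $K_i$-point in $W_i$). The cleaner justification, which the paper compresses into the phrase "by $\tau$-density", is to note that each $W_i(M_i)$ is non-empty and $\tau_i$-open, hence Zariski dense in $\Aa^d$, so \cref{equiv SPTC}.(iii) gives $\bigcap_i W_i(K)\neq\emptyset$ directly; $\tau$-density of $X$ in $U$ then slides that $K$-point into $X\cap\bigcap_i W_i$. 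This is exactly the "final coordination" you flag as the main obstacle, and your instinct about it is right; the only thing to tighten is the justification that $\bigcap_i W_i$ has a $K$-point, which should come straight from \cref{equiv SPTC} rather than via completions and \cref{approx var}.
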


\begin{proof}
By \cref{burden}.(6), we may assume that \(X\subseteq \Aa^d\). By \cref{Density2}, we may further assume that \(X\) is \(\tau\)-dense in \(\bigcap_i X_i\) where \(X_i\) is \(\cL_i(K)\)-definable and \(\tau_i\)-open. By \cref{nice pattern}, assuming that \(K\) is sufficiently saturated, for every \(i\in I\), we find an inp-pattern \(\phi_{i,l}(x,a_{i,l,j})_{l<\kappa_i,j<\omega}\) of \(\tau_i\)-open sets in \(X_i\),with \(a_{i,l,j} \in K\).

Note that for any finite \(Y_i \subseteq \kappa_i\) and \(f_i : Y_i \to \omega\), \(\bigcap_{l\in Y_i} \phi_{i,l}(M_i,a_{i,l,f(i,l)})\) is \(\tau_i\)-open and hence, by \(\tau\) density, \(X \cap \bigcap_{i,l\in Y_i} \phi_{i,l}(M_i,a_{i,l,f(i,l)})\neq \emptyset\). It follows that together, the \(\phi_{i,l}(x,a_{i,l,j})\) form an inp-pattern of depth \(\sum_i \kappa_i\) in \(X\).
\end{proof}

We have thus proved the following equality:

\begin{theorem}\label{bdn PTC}
For every perfect \(K\models\bSPTC\) and
\(\Ld(K)\)-definable set \(X\) of dimension \(d\), we have
\[\bdn(X) = \sum_i \bdn_i(\Aa^d).\]
\end{theorem}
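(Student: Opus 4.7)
The plan is to combine the two immediately preceding propositions (\cref{strong} and the unlabeled one just before the theorem), which have already done all the real work: they respectively provide matching upper and lower bounds on $\bdn(X)$ in terms of the $\bdn_i(\Aa^d)$.

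For the upper bound, I would apply \cref{strong} with the choice $\kappa_i := \bdn_i(\Aa^d)$ for each $i \in I$. Since the hypothesis $\bdn_i(\Aa^d) \leq \kappa_i$ is then tautological, the conclusion gives directly
\[\bdn(X) \leq \sum_{i \in I} \bdn_i(\Aa^d).\]
For the lower bound, I would apply the preceding proposition with the same choice $\kappa_i := \bdn_i(\Aa^d)$, so that $\bdn_i(\Aa^d) \geq \kappa_i$ is trivially satisfied, and conclude
\[\bdn(X) \geq \sum_{i \in I} \bdn_i(\Aa^d).\]
Combining the two inequalities yields the desired equality.

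I do not expect any serious obstacle here: the theorem is essentially a clean packaging of the two nontrivial results that have just been established. The only subtle point is that burden is computed in $\mathrm{Card}^\star$ (as noted after the definition of burden, following \cite[Definition~1.27]{Tou-bdn}), so that the sum on the right-hand side is to be understood in that refined sense and one must invoke the strict variant of \cref{strong} when some $\bdn_i(\Aa^d)$ is infinite in order to get the correct additive formula rather than a mere supremum. Once this convention is fixed, the two bounds agree exactly and the equality follows with no further work.
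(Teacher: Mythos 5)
Your proposal matches the paper's own argument exactly: the theorem is stated immediately after the two bounding propositions with the remark "We have thus proved the following equality," and combining \cref{strong} (with its strict-inequality variant for the $\mathrm{Card}^\star$ refinement) and the subsequent lower-bound proposition, both instantiated at $\kappa_i = \bdn_i(\Aa^d)$, is precisely what the paper does.
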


The equality holds in \(\mathrm{Card}^*\), \emph{cf.} \cite[Definition~1.29]{Tou-bdn}.

\begin{corollary}
Let \(\K\models \PAC\) be perfect and bounded and \(v_i\) be \(n\) independent
non-trivial valuations on \(K\). Then
\[\bdn(K, v_1,\ldots,v_n) = n.\]
\end{corollary}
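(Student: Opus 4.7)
The plan is to reduce the claim to \cref{bdn PTC} applied to the setting \(T = \ACF\) and \(T_1 = \cdots = T_n = \ACVF\) (each in its own copy of the language), with \(I_\tau = I = \{T_1, \ldots, T_n\}\). Since \(K\) is perfect and bounded, it admits an expansion to a model of \(T_\fd\) for some suitable \(\fd\); to apply \cref{bdn PTC} we need only verify that \((K, v_1, \ldots, v_n)\) is \(\PIC\) in this setting, and then compute each \(\bdn_i(\Aa^1)\).

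For the \(\PIC\) verification we invoke \cref{free ec}. Condition (a), \(K \models \PTC\), is immediate because pseudo \(\ACF\)-closed fields are exactly \(\PAC\) fields (\cref{Ex PTC}). Condition (b), \(\dcl_i(\emptyset) \subseteq \alg{K}\) for each \(i\), holds trivially since the quantifier-free \(\dcl\) in \(\ACVF\) over the empty set is contained in the prime field and hence in \(\alg{K}\). Condition (c), that the \(\tau_i\) induce distinct topologies on \(K\), is precisely the independence hypothesis on the \(v_i\) (distinct non-trivial \(V\)-topologies are automatically independent by Stone's approximation theorem). For condition (d), a non-empty \(\cL_i(K)\)-definable open subset of \(\Aa^1\) is a union of open \(v_i\)-balls, so we need \(K\) to be \(v_i\)-dense in its own \(v_i\)-completion; this is Kollár's density theorem for perfect \(\PAC\) fields endowed with a valuation \cite{Kol-PAC}, which tells us that such \(K\) is dense in its \(v_i\)-henselization (in particular in its completion). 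Hypothesis \textbf{(F$_2$)} is satisfied because every model of \(T_K = \ACF_K\) is an algebraically closed field, and any algebraically closed overfield of \(K\) contains (after suitable elementary extension) a copy of the algebraic closure of \((K, v_i)\) as a valued field. Thus \(K\) is \(\PIC\), and being perfect and bounded, it is \(\bSPTC\) after the addition of the constants naming the coefficients of the \(P_n\).

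Having placed ourselves inside \cref{bdn PTC}, we obtain
\[
\bdn(K, v_1, \ldots, v_n) \;=\; \bdn(\Aa^1) \;=\; \sum_{i=1}^n \bdn_i(\Aa^1),
\]
where \(\bdn_i\) is the burden computed in the relevant \(\ACVF\). It remains to check that \(\bdn_i(\Aa^1) = 1\) for each \(i\). Any model of \(\ACVF\) eliminates quantifiers relative to \(\RV_1\) (indeed relative to the residue field and value group), is maximally complete after sufficient saturation, and has algebraically closed residue field so that \(\k/\k^{\star n}\) is trivial. \Cref{bdn VF}(1) therefore applies and yields
\[
\bdn_i(\Aa^1) \;=\; \max\{\bdn(\k),\, \bdn(\Vg)\} \;=\; \max\{0,\, 1\} \;=\; 1,
\]
since the residue field is algebraically closed (hence strongly minimal, of burden \(0\)) and the value group is a divisible ordered abelian group (of burden \(1\)). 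Summing over the \(n\) valuations gives \(\bdn(K, v_1, \ldots, v_n) = n\), as required.

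No serious obstacle is expected: the density condition (d) of \cref{free ec} is the only non-formal ingredient, and it is supplied by Kollár's theorem. Everything else is a direct application of the machinery developed earlier in the paper together with the standard burden computation for \(\ACVF\).
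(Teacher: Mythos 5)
Your proposal is correct and follows essentially the same route the paper intends: establish that $(K,v_1,\ldots,v_n)$ is $\PIC$ via \cref{free ec} (the paper itself points to this in the example list following \cref{PTCI}), upgrade to $\bSPTC$ by adding the constants of $T_\fd$, and then apply \cref{bdn PTC} together with the $\ACVF$ burden computation from \cref{bdn VF}. Your verification of conditions (a)--(d) and of \textbf{(F$_2$)} is sound; in particular citing Kollár's density theorem for (d) is legitimate (the paper's \cref{PTC dense} would do just as well and is the internal generalization).

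One small slip worth correcting: you assert that the residue field, being strongly minimal, has burden $0$. In fact every infinite definable set supports an inp-pattern of depth $1$ (take $\phi(x,a_j)$ to be $x=a_j$ for distinct $a_j$), so a strongly minimal structure has burden exactly $1$, not $0$. Thus the computation should read $\bdn_i(\Aa^1)=\max\{\bdn(\k),\bdn(\Vg)\}=\max\{1,1\}=1$. The conclusion $\bdn(K,v_1,\ldots,v_n)=n$ is unaffected, but the intermediate claim should be fixed.
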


A similar corollary holds for \(\PRC\) and \(\PpC\) fields, but we refer the
reader to the more general statement in \cref{burden PIC}.

\section{Forking and dividing}

In this section we will study the relationship between forking (and dividing) in
the theories $T_i$ and in the theory $\bSPTC$.

Let us recall our conventions and notation. We assume that \(I\) is finite and
that \(T = \bigvee_{i\in I} T_i\). We also assume that \textbf{(H$_i$)} holds
for every \(i\in I\) (see Hypothesis \ref{H2i}). We denote by \(\tau\) the topology generated by the \(\tau_i\), and we fix \(K\models\bSPTC\)
perfect sufficiently saturated and homogeneous. We fix some \(M_i\models
T_{i,K}\).

\begin{proposition}
\label{Fork=Div}
Let  $A = \acl(A) \leq K$ and $X$ be a $\Ld(K)$-definable subset of $K$ such
that $X \subseteq \bigcap_{i}X_{i}$ is \(\tau\)-dense, where $X_{i}$ is
$\cL_i(K)$-definable and $\tau_i$-open. Then $X$ does not divide over $A$ if and
only if for all $i$, $X_i$ does not divide over $A$ (in
\(M_i\)). 
\end{proposition}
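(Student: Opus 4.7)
The plan is to prove the two directions separately. By \cref{Density} applied to the open set $\bigcap_i X_i$, we may assume that $X = \phi(K,c)$ and each $X_i = \psi_i(K,c)$ share a joint parameter tuple $c\in K$, so that dividing of either refers to moving the same $c$ along an indiscernible sequence. We use throughout that $K\leq M_i$ is a regular totally $T$ extension and hence, by \cref{equiv SPTC}.(i), $K$ is $\cL_I$-existentially closed in $M_i$; since each $T_i$ eliminates quantifiers, any $\cL_i(Ac)$-type realised in $M_i$ is realised in $K$, by saturation.

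For the forward direction I argue by contrapositive: suppose some $X_i$ divides over $A$ in $M_i$, witnessed by an $\cL_i$-indiscernible sequence $(c_j)_{j<\omega}$ over $A$ in $M_i$ with $c_0=c$ and $\{\psi_i(x,c_j)\}$ $k$-inconsistent in $M_i$. Realise the $\cL_i(Ac)$-type of $(c_j)_{j\geq 1}$ in $K$, extract an $\cL_\fd$-indiscernible subsequence over $A$, and use homogeneity of $K$ to arrange that its first element equals $c$. The $\cL_i$-type over $A$ is preserved throughout, so $\{\psi_i(x,c'_j)\}$ remains $k$-inconsistent in $M_i$, hence in $K$. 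Since $\phi(K,c'_j)\subseteq \psi_i(K,c'_j)$, the new sequence $(c'_j)$ witnesses dividing of $X$ over $A$ in $K$.

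For the converse, assume each $X_i$ does not divide over $A$ in $M_i$ and, for contradiction, that $X$ divides via some $\cL_\fd$-indiscernible $(c_j)$ over $A$ in $K$, with $c_0=c$ and $\{\phi(K,c_j)\}$ being $k$-inconsistent. Since $(c_j)$ is also $\cL_i$-indiscernible over $A$ in $M_i$, the non-dividing hypothesis forces $\bigcap_{j\in J}\psi_i(M_i,c_j)\neq \emptyset$ for every finite $J\subseteq \omega$ and every $i$; as this set is $\tau_i$-open, $\cL_i$-existential closure of $K$ in $M_i$ yields $\bigcap_{j\in J}\psi_i(K,c_j)\neq \emptyset$. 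I then follow the strategy of \cref{lemsq}, relativised to $\acl(Ac_0)$: pick a point $y\in \phi(K,c_0)$ algebraically independent from $(c_j)_{j\geq 1}$ over $Ac_0$, apply Erd\H{o}s--Rado to ensure that $\tp((c_j)_{j\in J}/\acl(Ay))$ depends only on $|J|$, and use \cref{Amalgamation} inductively to construct $y_n\in \bigcap_{j<2^n}\phi(K,c_j)$. For $n$ with $2^n\geq k$, this contradicts $k$-inconsistency.

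The main obstacle lies in the converse: \cref{lemsq} is stated for a dense target that is quantifier-free $\cL_I(A)$-definable, whereas our natural target $\bigcap_i\psi_i(K,c_0)$ is only defined over $\acl(Ac_0)$. The adaptation requires working relatively over $\acl(Ac_0)$ while preserving the indiscernibility of $(c_j)_{j\geq 1}$ over this enlarged base, and what makes the successive amalgamation inputs simultaneously realisable is exactly the non-emptiness of the multi-index intersections $\bigcap_{j\in J}\psi_i(K,c_j)$, provided by the combination of the non-dividing hypothesis on the $X_i$ and the $\PIC$ property of $K$.
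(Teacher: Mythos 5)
The proposal does not quite work; there are two separate issues, one in each direction.

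For the forward direction (contrapositive), you realise the $\cL_i(Ac)$-type of $(c_j)_{j\geq 1}$ in $K$, extract an $\cL_\fd$-indiscernible subsequence, and then try to use homogeneity to set the first term to $c$. This last step fails: after realising an $\cL_i$-type and extracting an $\cL_\fd$-indiscernible sequence, the first element $d_0$ of the extracted sequence only agrees with $c$ in its $\cL_i$-type over $A$; its $\cL_{i'}$-type for $i'\neq i$ is whatever the extraction produced, and nothing forces $d_0\equiv^{\cL_\fd}_A c$. Homogeneity of $K$ therefore does not let you move $d_0$ to $c$, and without that you have only shown that $\phi(x,d_0)$ divides, not $\phi(x,c)$. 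The paper avoids this by building the sequence abstractly: starting from algebraically independent $(b_j)$ in $M_i$ with each $A(b_j)\simeq A(b)\leq K$, it equips $A(b_j\mid j<\omega)$ with a $T_{I,\forall}$-structure via \cref{FA Ti:reg} that restricts correctly on each $A(b_j)$ for \emph{all} the languages $\cL_{i'}$, and then embeds $K(b_j\mid j<\omega)$ into $K^\star\supsel K$ over $K$ using \cref{better SPTC:reg}. This controls all the $\cL_{i'}$-structures simultaneously and keeps $b$ fixed, which is exactly what your extraction-plus-homogeneity step cannot do.

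For the converse, you correctly identify the obstacle (that the natural target $\bigcap_i\psi_i(K,c_0)$ lives over $\acl(Ac_0)$ rather than over $A$), but you propose to re-prove a relativised \cref{lemsq} over $\acl(Ac_0)$, which requires making the tail $(c_j)_{j\geq 1}$ indiscernible over the enlarged base containing $c_0$ while keeping $c_0$ fixed; this is awkward and you leave it unresolved. The paper sidesteps the obstacle entirely: from non-dividing of each $X_i$ it gets $\bigcap_j X_{ij}\neq\emptyset$, and, crucially, extracts by compactness an $\cL_i(K)$-definable $\tau_i$-open set $U_i\subseteq\bigcap_j X_{ij}$. The parameters of the $U_i$ are unrelated to $b=c_0$; one may enlarge $A$ to include them and pass to a subsequence of the $b_j$ indiscernible over the new $A$. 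Then $U=\bigcap_i U_i$ is quantifier-free $\cL_I(A)$-definable and $X$ is $\tau$-dense in $U$, so \cref{lemsq} applies as stated (no relativisation needed) and gives consistency of $\bigwedge_j\psi(x,b_j)$. You should replace your ad-hoc relativised construction by this cleaner reduction.
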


\begin{proof}
$(\Leftarrow)$ We assume that, for every \(i\), \(X_i := \theta_i(M_i,b)\) does
not divide over \(A\) in \(M_i\), and we write $X$ as $\psi(K, b)$. Let $\kappa$
be a sufficiently large cardinal and $(b_j)_{j <\kappa}$ be an indiscernible
sequence over $A$ with $b_0 = b$. Let \(X_{ij} = \theta_i(M_i,b_j)\). By
indiscernability \(X_j = \psi(K,b_j) \subseteq \bigcap_{i}X_{ij}\) is
\(\tau\)-dense. Since \(X_i\) does not divide over \(A\), we have \(\bigcap_{j}
X_{ij} \neq \emptyset\) and, by compactness, it contains some non-empty
\(\cL_i(K)\)-definable \(\tau_i\)-open \(U_i\). Enlarging \(A\) and taking an
appropriate subsequence of the \(b_j\), we may assume that \(U_i\) is
\(\cL_i(A)\)-definable.

Note that since \(U := \bigcap_i U_i\) is \(\tau\)-open, \(X\) is \(\tau\)-dense
in \(U\). It follows, by \cref{lemsq}, that \(\bigcap_j (X_j \cap
U)\neq\emptyset\), and hence \(X\) does not divide over \(A\).

$(\Rightarrow)$ We now assume that, for some fixed \(i\), $X_i:= \theta_i(M_i,
b)$ divides over A and let $(b_j)_{j < \omega}$ be an $A$-indiscernible sequence
such that $b_0 = b$ and $\bigwedge_j\theta(x, b_j)$ is inconsistent. We may assume
that the \(b_j\) are algebraically independent over \(A\). For all \(j\),
\(A(B_j) \simeq A(b) \leq K\). Since $A\leq A(b_j)$ is regular, we have
$A(b_{j+1}) \ind^l_A A(b_0 \ldots b_j)$ and, by \cref{FA Ti:reg} and induction,
we see that $A(b_j: j < \omega) \models T_{i,\forall}$. By \cref{better
SPTC:reg}, \(K \leq K(b_j: j < \omega) \simeq K\otimes_A A(b_j: j < \omega)\) is
\(\cL_I\)-existentially closed, so we may assume that the \(b_j\) are in \(K\).
It follows that \(X_i\cap K\) divides over \(A\) in \(K\), and hence, so does
\(X \subseteq X_i\cap K\).
\end{proof}

\begin{corollary}
Let $a$ be a tuple of $K$ and $A \subseteq B \subseteq K$ such that $A= \acl(A)$
and $B= \acl(B)$.  Then $\tp(a/B)$ does not divide over $A$ if and only if, for
all $i \in \{1, \ldots, n\}$, $\tp_i(a/B)$ does not divide over $A$ (in $M_i$).  

\end{corollary}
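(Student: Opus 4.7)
The plan is to argue formula by formula, reducing each formula to the shape required by Proposition \ref{Fork=Div} using Corollary \ref{Density2} in the $(\Leftarrow)$ direction and Hypothesis \textbf{(H$_i$)} in the $(\Rightarrow)$ direction. Two standard facts will be used throughout: a partial type does not divide over $A$ if and only if every formula in it does not divide, and if a formula $\psi$ (with a possibly enlarged tuple of parameters from $B$) implies $\phi$, then non-dividing of $\psi$ over $A$ entails non-dividing of $\phi$ over $A$, by enlarging any $A$-indiscernible witness.

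For $(\Leftarrow)$, I would fix $\phi(x,b) \in \tp(a/B)$ and induct on $\dim \phi(K,b)$. Applying Corollary \ref{Density2} with $F = B$, one obtains $\cL_i(B)$-definable $\tau_i$-open sets $Y_{ij}$ and a quantifier free $\Ld(B)$-definable set $Y$ with $\dim Y < \dim \phi(K,b)$ such that $\phi(K,b) \subseteq \bigcup_{j<m} \bigcap_{i \in I} Y_{ij} \cup Y$ is $\tau$-dense. Since $a \in \phi(K,b)$, either $a \in Y$, in which case $\phi \wedge Y \in \tp(a/B)$ has strictly smaller dimension and we conclude by induction; or $a \in \bigcap_i Y_{ij_0}$ for some $j_0 < m$. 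In the latter case, setting $\psi := \phi \wedge \bigwedge_i Y_{ij_0} \in \tp(a/B)$, the $\tau$-openness of $\bigcap_i Y_{ij_0}$ together with the locality of density on open subspaces forces $\psi(K,b)$ to be $\tau$-dense in $\bigcap_i Y_{ij_0}$. Proposition \ref{Fork=Div} applied to $\psi$ with $X_i := Y_{ij_0}$ then reduces non-dividing of $\psi$ in $K$ to non-dividing of each $Y_{ij_0}$ in $M_i$, which holds because $Y_{ij_0} \in \tp_i(a/B)$ and $\tp_i(a/B)$ does not divide by hypothesis; hence $\psi$, and therefore $\phi$, does not divide over $A$.

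For $(\Rightarrow)$, I fix $\theta_i(x,b) \in \tp_i(a/B)$ and induct on $\dim \theta_i(M_i,b)$. By Hypothesis \textbf{(H$_i$)}, the $\tau_i$-interior $\mathring{\theta}_i$ of $\theta_i$ in its Zariski closure is non-empty and $\cL_i$-definable, while $\theta_i \setminus \mathring{\theta}_i$ has strictly smaller Zariski dimension. If $a \notin \mathring{\theta}_i$, then $\theta_i \setminus \mathring{\theta}_i \in \tp_i(a/B)$ does not divide by induction, and hence neither does $\theta_i$. Otherwise $\mathring{\theta}_i \in \tp_i(a/B) \cap \tp(a/B)$, and I apply Proposition \ref{Fork=Div} to $X := \mathring{\theta}_i(K,b)$ with $X_i := \mathring{\theta}_i$ and $X_{i'}$ the ambient Zariski closure of $\theta_i$ for $i' \neq i$; the $\tau$-density hypothesis is trivial since $X$ coincides with $\bigcap_{i'} X_{i'}$. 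Since $X \in \tp(a/B)$ does not divide over $A$ by assumption, $\mathring{\theta}_i$ does not divide in $M_i$, and hence $\theta_i$ does not divide either.

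The main technical point is the verification of the $\tau$-density hypothesis of Proposition \ref{Fork=Div} in the non-trivial case of $(\Leftarrow)$, which reduces to the standard topological fact that a dense subset of a space restricts to a dense subset of any open subspace. A minor caveat is that Proposition \ref{Fork=Div} is stated for $\Ld(K)$-definable subsets of $K$, while our $a$ is a tuple; the proof of Proposition \ref{Fork=Div} carries over without change to subsets of $K^n$, and this extension is used implicitly.
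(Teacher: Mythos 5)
Your argument is correct and is essentially what the paper intends when it asserts that the corollary "follows from \cref{Fork=Div} and \cref{Density2}": you strengthen each formula of the relevant type to one fitting the hypotheses of \cref{Fork=Div} (via \cref{Density2} in one direction and Hypothesis~\textbf{(H$_i$)} in the other) and induct on dimension, exactly as those two results set up. One small caution: your second "standard fact" (that $\psi\vdash\phi$ over parameters in $B$ and $\psi$ non-dividing over $A$ give $\phi$ non-dividing over $A$) is true, but not for the reason stated---an $A$-indiscernible sequence $(b_j)$ starting at $b$ cannot in general be enlarged to an $A$-indiscernible $(b_jc_j)$ starting at $bc$ (this already fails in $\mathrm{ACF}$ with $(b_j)$ algebraically independent and $c$ a later term of the sequence); instead, one applies $A$-automorphisms sending $b$ to each $b_j$ to produce $A$-conjugates $c_j$ of $c$ whose instances of $\psi$ are $k$-inconsistent, and then re-extracts an indiscernible sequence by Erd\H{o}s--Rado.
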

\begin{proof}
This follows from \cref{Fork=Div} and \cref{Density2}.
\end{proof}
 
\begin{thm} \label{Extbase}
Let $A = \acl(A) \leq K$. If $A$ is an extension base of $T_i$, for all $i \in I$, then $A$ is an extension base of $K$.
\end{thm}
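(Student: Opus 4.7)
The strategy is to show that for every tuple $a \in K$, there exists $a' \equiv_A a$ in $K$ with $\tp(a'/K)$ not dividing over $A$; this suffices for $\tp(a/A)$ to be non-forking. Indeed, if $\tp(a/A)$ forked via $\phi(x, e) \vdash \bigvee_{j < m} \psi_j(x, c_j)$ with each $\psi_j$ dividing over $A$, then by $A$-invariance of dividing and saturation of $K$ we may take each $c_j$ in $K$; since $a' \models \phi(x, e)$, some $\psi_{j_0}(x, c_{j_0}) \in \tp(a'/K)$ would divide over $A$, contradicting the choice of $a'$.

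To construct $a'$, I first enlarge $a$ to a tuple $b$ enumerating $\acl(Aa)$, which by \cref{char acl} equals $\alg{A(a)} \cap K$. Since $A = \acl(A)$ and $A(b)$ is relatively algebraically closed in $K$, the extension $A \leq A(b)$ is regular and the algebraic locus $V_A$ of $b$ over $A$ is geometrically integral. For each $i \in I$, since $A$ is an extension base in $T_i$, the type $\tp_i(b/A)$ admits a non-forking extension $q_i \in \TP^i(K)$; by choosing $q_i$ appropriately we further arrange that its realizations are algebraically independent from $K$ over $A$, so that $q_i$ is $K$-generic in $V := V_A \tensor_A K$, a geometrically integral totally $T$ variety over $K$ (totally $T$ follows from the $q_i$-realizations supplying smooth points in each $M_i$ together with hypothesis (\textbf{F}$_2$)). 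By \cref{equiv SPTC}.(ii) applied to $V$ and the family $(q_i)_{i \in I}$, the partial type $\bigcup_i q_i$ is realized in some $K^\star \succcurlyeq K$, hence, by saturation of $K$, by some $b' \in K$.

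It remains to check two properties of $b'$. For each $i$, the equality $\tp_i(b'/A) = \tp_i(b/A)$ yields an $\cL_i$-isomorphism $A(b) \to A(b')$ over $A$ sending $b$ to $b'$; since these maps share the same underlying field isomorphism, they combine into a single $\cL_I$-isomorphism. Algebraic independence of $b'$ from $K$ over $A$ together with $K = \acl(K)$ forces $A(b')$ to be relatively algebraically closed in $K$, so $b'$ enumerates $\acl(Ab')$, and \cref{char elem} yields $b' \equiv_A b$. The corollary to \cref{Fork=Div} applied with $B = K$ then shows that $\tp(b'/K)$ does not divide over $A$, because each $\tp_i(b'/K) = q_i$ is non-forking, hence non-dividing, in $T_i$. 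Setting $a'$ to be the subtuple of $b'$ corresponding to $a$ completes the argument.

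The main obstacle is the step of arranging that the non-forking extension $q_i$ is $K$-generic in $V$, equivalently that a realization is algebraically independent from $K$ over $A$. This is immediate in the standard examples ($\ACF$, $\RCF$, $\pCF$, $\ACVF$), where non-forking over an algebraically closed base coincides with algebraic independence; for a general $T_i$ satisfying the paper's hypotheses one needs to combine the extension base property with a genericity argument to produce a non-forking extension of maximal algebraic dimension.
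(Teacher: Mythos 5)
Your proposal takes a genuinely different route from the paper's: you try to build a global non-forking extension of $\tp(b/A)$ directly, by amalgamating non-forking $\cL_i$-extensions via \cref{equiv SPTC}.(ii), whereas the paper works entirely at the level of definable sets, using \cref{Density2} and \cref{Fork=Div} together with a combinatorial argument on the atoms of the Boolean algebra generated by the pieces $Y_{ji}$, and the observation that over $A=\acl(A)$ a finite set that does not divide must meet $A$.

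The gap you flag at the end is genuine and, as far as I can tell, not fillable from the paper's hypotheses alone. To apply \cref{equiv SPTC}.(ii) you need, for each $i$, a non-forking extension $q_i$ of $\tp_i(b/A)$ whose realizations are \emph{algebraically independent} from $K$ over $A$ (that is, $K$-generic in $V$). That non-forking over an $\acl$-closed base implies field-theoretic algebraic independence is a special feature of $\ACF$, $\RCF$, $\ACVF$, $p\mathrm{CF}$; it is not guaranteed by \textbf{(H$_i$)}. A formula $x\in W$, with $W$ a proper $K$-subvariety of $V$ not defined over $\alg{A}$, need not divide over $A$: a non-constant $A$-indiscernible sequence of conjugate subvarieties $(W_j)$ can have non-empty common intersection (think of a family of lines through a fixed $A$-point), so the obvious dividing witness fails, and one can imagine non-forking extensions that concentrate on such a subvariety. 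Without an argument producing a $K$-generic non-forking $q_i$ uniformly across the $i\in I$ (note: they must all be generic in the \emph{same} $V$), the construction of $b'$ does not get off the ground. The paper sidesteps this entirely by never constructing types, only decomposing definable sets.

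Two smaller points. First, the reduction is fine but slightly delicate: when you write "by saturation of $K$, by some $b'\in K$", this is a slip — $\bigcup_i q_i$ is a type over all of $K$, so its realization $b'$ lives in $K^\star$, not in $K$. Fortunately the argument only needs $\tp(b'/K)$ to be non-dividing and $b'\equiv_A b$, so $b'\in K^\star$ is what you want. Second, the discussion of $A(b')$ being relatively algebraically closed is not actually needed: \cref{char elem} asks that the embedding $A(b)\to K^\star$ extend to $\acl_I(A(b))\cap K$, and since $b$ enumerates $\acl(Ab)=\acl_I(Ab)\cap K$, the domain is already closed in $K$; no property of $A(b')$ inside $K^\star$ is required.
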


\begin{proof}
By transitivity of forking, it suffices to show that no \(\Ld(A)\)-definable set
\(X\subseteq K\) forks over \(A\). So, if \(X \subseteq \bigcup_{j<m} Y_j\), where the
\(Y_j\) are \(\Ld(K)\)-definable, we have to show that one of the \(Y_j\) does
not divide over \(A\). To do so, we may assume:
\begin{enumerate}[(i)]
\item \(X \subseteq \bigcap_i X_i\) is \(\tau\)-dense, where the \(X_i\) are
\(\cL_i(A)\)-definable.
\item \(X \setminus \bigcup_j Y_j\) is finite.
\item \(Y_j \subseteq \bigcap_i Y_{ji}\) is \(\tau\)-dense, where
\(Y_{ji}\subseteq X_i\) is \(\tau_i\)-open and \(\cL_i(K)\)-definable.
\item For every \(i\) and \(j_1, j_2\), \(Y_{j_1 i} = Y_{j_2 i}\) or
\(Y_{j_1i}\cap Y_{j_2i} = \emptyset\).
\item \(X_i \setminus \bigcup_{j} Y_{ji}\) is finite and does not contain any
element of \(A\).
\end{enumerate}
Indeed, by \cref{Density2}, \(X\) is \(\tau\)-dense in \(\bigcup_l\bigcap_i
X_{il} \cup Z\) where the \(X_{il}\) are \(\cL_i(A)\)-definable and \(Z\) is
finite and \(\Ld(A)\)-definable. If \(Z\) is non-empty, it does not fork over
\(A\), and we are done. Otherwise, it suffices to show that any of the \(X \cap
\bigcap_{i} X_{il}\) does not fork over \(A\). Similarly, by \cref{Density2},
\(Y_j\) is \(\tau\)-dense in \(\bigcup_l\bigcap_i Y_{jil} \cup Z_j\) where the
\(Y_{jil}\) are \(\cL_i(K)\)-definable \(\tau_i\)-open and \(Z_j\) is finite.
Replacing the \(Y_{j}\) by the \(Y_{j}\cap \bigcap_i (Y_{jil}\cap X_i)\) and
setting the finite sets aside, we can ensure (ii) and (iii).

Replacing the \(Y_{ji}\) by the \(\tau_i\)-interior of the atoms of Boolean
algebra that they generate (for fixed \(i\)), we can further assume (iii). Now,
for some finite set \(Z\), we have \(X\subseteq \bigcup_j Y_{ji} \cup Z
\subseteq X_i\) and since \(X\) is \(\tau_i\)-dense in \(X_i\), \(Z_i :=
X_i\setminus(\bigcup_j Y_{ji} \cup Z)\) has empty \(\tau_i\)-interior and is
therefore finite. Now, if \((Z_i\cup Z) \cap A \neq \emptyset\), we can remove
those points from \(X\) and the \(X_i\) without losing any of the other
properties.

There remains to show that one of the \(Y_j\) does not divide over \(A\). Note
first that, for every \(f : I \to m\), by \(\tau\)-density, there exists an
\(a\in X\cap \bigcap_i Y_{f(i) i}\). Let \(j\) be such that \(a\in Y_j \subseteq
\bigcap_i Y_{ji}\). Then, by (iv), for all \(i\) we have \(Y_{f(i) i} =
Y_{ji}\).

Now, by hypothesis, \(X_i\) does not fork over \(A\) and since any finite set
that does not divide over \(A = \acl(A)\) must contain a point from \(A\), there
is an \(f(i) < m\) such that \(Y_{f(i) i}\) does not divide over \(A\). As noted
above, we may assume that \(f(i) = j\) is constant. It now follows from
\cref{Fork=Div}, that \(Y_{j}\) does not divide over \(A\).
\end{proof}

\begin{definition}
Let $a$ be a tuple of $K$ and  $A , B\subseteq K$. We write $a \ind_A B$ if
$\tp(a/AB)$ does not fork over $A$ (in $K$) and $a \ind^i_A B$ if
$\tp_{i}(a/AB)$ does not fork over $A$ (in $M_i$).
\end{definition}

\begin{corollary}\label{char fork}
Suppose that $T_i$ is NTP$_2$ for all $i\in I$ and that forking equals dividing
in $T_i$. Then forking equals dividing in $K$ and if $a, A, B$ are in $K$, then
$a \ind^i_A B$ for all $i\in I$ if and only if $a \ind_A B$. 
\end{corollary}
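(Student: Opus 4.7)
The plan is to combine \cref{PTCNTP2} with \cref{Extbase} and the corollary following \cref{Fork=Div}. By \cref{PTCNTP2}, since each \(T_i\) is \(\NTP_2\), so is \(K\). The preliminary observation is that ``forking equals dividing in \(T_i\)'' forces every subset \(A\) of a model of \(T_i\) to be an extension base there: for any tuple \(a\), every formula of \(\tp_i(a/A)\) has parameters \(b \in A\); any \(A\)-indiscernible sequence starting with \(b\) is constant (each entry is \(\equiv_A b\) and \(b \in A\) pins the type down), so the formula cannot divide over \(A\), and by the hypothesis it cannot fork over \(A\) either. Thus \(\tp_i(a/A)\) is a non-forking extension.

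Applied to an arbitrary \(\acl\)-closed \(A \leq K\), this shows \(A\) is an extension base in every \(T_i\); by \cref{Extbase}, \(A\) is then an extension base in \(K\). Combined with \(\NTP_2\)-ness of \(K\), Chernikov--Kaplan (Theorem 1.2 of \cite{CheKap}) yields that forking equals dividing over every algebraically closed base in \(K\). The standard invariance of forking and dividing under passage to the algebraic closure of the base then extends this to arbitrary bases, giving the first assertion.

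For the characterization of \(\ind\), let \(a, A, B \subseteq K\). Replacing \(A\) by \(\acl(A)\) and \(B\) by \(\acl(AB)\) preserves both \(\ind_A\) and each \(\ind^i_A\), so we may assume \(A = \acl(A) \subseteq B = \acl(B)\). Then \(a \ind_A B\) iff \(\tp(a/B)\) does not fork over \(A\), iff (by the first part just established) \(\tp(a/B)\) does not divide over \(A\). By the corollary following \cref{Fork=Div}, this holds iff \(\tp_i(a/B)\) does not divide over \(A\) in \(M_i\) for every \(i\), which, since forking equals dividing in each \(T_i\), holds iff \(a \ind^i_A B\) for every \(i\).

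The main technical point is the careful handling of algebraic closures --- both in verifying the preliminary ``extension base'' observation and in reducing the characterization of \(\ind\) to the \(\acl\)-closed case; the rest is an assembly of the tools developed in the preceding sections.
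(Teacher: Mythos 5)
Your proof is correct and follows essentially the same route as the paper's: invoke \cref{PTCNTP2} for NTP$_2$, reduce the first claim to the extension-base property via Chernikov--Kaplan and \cref{Extbase}, and deduce the characterization of $\ind$ from the corollary after \cref{Fork=Div}. The paper's proof is considerably more terse --- it states flatly that "any set in $K$ is an extension base" is "trivial by \cref{Extbase}" and that the second part is "clear from \cref{Fork=Div}" --- so your spelling out of the two hidden steps (that forking $=$ dividing in $T_i$ forces every set to be an extension base in $T_i$, and that passing to $\acl$-closed bases is harmless by invariance of dividing under algebraic closure) is a faithful and slightly more careful rendering of the same argument.
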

\begin{proof}
Since $K$ is NTP$_2$ (\cref{PTCNTP2}) to show that forking equals dividing in $K$ it's enough to show that any set in $K$  is an extension base, but this is trivial by \cref{Extbase}.

The last equivalence is clear from \cref{Fork=Div} and the fact that forking equals dividing in $K$.
\end{proof}

\appendix

\section{Dependent topologies}

Following the approach of \cite{Joh-MulACVF} we can also understand dependent
valuations and orders by reduction to the case of independent topologies.

We now fix a finite tree \(I\) --- that is a filtered partial order such that
for all \(i\in I\), the set \(\{j\in I\mid i \leq j\}\) is totally ordered. Let
\(r\) be its maximal element --- its root. To every leaf \(i\) --- that is every
minimal element --- we associate:
\begin{itemize}
\item An integer \(p_i\) which is either prime or zero.
\item A theory complete \(T_i\) of fields of characteristic \(p_i\)  which
eliminates quantifiers in a relational expansion of the ring language such that
\textbf{(H$_{i}$)} holds (see hypothesis \ref{H2i}).
\end{itemize}

To every non-leaf \(i\in I\), we associate:
\begin{itemize}
\item An integer \(p_i\) which is either prime or zero. We assume that for all
daughters \(j\) of \(i\), the \(p_j\) are equal. Moreover, they are equal to
\(p_i\) if it is non-zero. We also assume that \(p_r\) is equal to its daughter \(p_j\).
\item A theory complete \(T_{i,\Gamma}\) of (enriched) ordered Abelian group. We
assume that it is the theory of the trivial group if and only if \(i = r\). If
\(p_i\) is non-zero, we assume that models of \(T_{i,\Gamma}\) are
\(p_i\)-divisible. If \(p_i\) is zero and daughter \(p_j\) are non-zero, we
require \(T_{i,\Gamma}\) to come with a constant \(c_i\) for a positive element.
We then assume that\([-c_i,c_i]\) is either \(p_j\)-divisible or finite --- in
the latter case, we say that \(i\) is finitely ramified.
\end{itemize}

For every non-leaf \(i\in I\), we define the theories \(T_{i,\k}\)
and \(T_i\), by induction:
\begin{itemize}
\item The theory \(T_{i,\k}\) (the residual theory of \(i\)) is the theory of
pseudo \(I_i\)-closed fields without finite extensions of degree divisible by
\(p_i\) (if it is non-zero), where \(I_i = \{T_j : j\) daughter of \(i\}\). 

\item The theory \(T_i\) is a Morleyization of the theory of characteristic
\(p_i\) algebraically maximal complete fields with residue field a model of
\(T_{i,\k}\) and value group a model of \(T_{i,\Gamma}\). If \(p_i\) is zero and
the daughter \(p_j\) are non-zero, then we also require that \(c_i = \val(p_j)\).
\end{itemize}

Note that, by \cite[Corollary A.3]{HalHas-QE}, if \(i\) is not finitely ramified,
the reduct of \(T_i\) to the valued field language eliminates quantifiers
relative to \(\RV_1\); and it eliminates quantifiers relative to \(\bigcup_n
\RV_n\) if \(i\) is finitely ramified, by \cite[Theorem~B]{Bas-QE}. Moreover,
each of completion admits maximally complete models. In particular,
\textbf{(H$_{i}$)} holds in every \(T_i\).

\begin{definition}
We say that a field is pseudo \(I\)-closed if it is pseudo \(I_r\)-closed.
\end{definition}

This is coherent with our earlier notation, provided we identify a set of
independent valuations to the tree of height one with trivial valuation at the
root, and the independent valuations as leaves.

We also define:
\[\kappa_i = \left\{\begin{array}{ll}
\bdn(T_i) & \text{if \(i\) is a leaf,}\\
\max(\aleph_0 \cdot \sum_{j\in I_j} \kappa_j,\bdn(T_{i,\Gamma})) & \text{if \(i\) is finitely ramified,}\\
\max(\sum_{j\in I_j} \kappa_j,\bdn(T_{i,\Gamma}))& \text{otherwise.}
\end{array}\right.\]

Once again, for a finer estimate, \(\kappa_i\) can be computed in \(\mathrm{Card}^\star\), \emph{cf.} \cite[Definition~1.29]{Tou-bdn}.

\begin{theorem}
Let \(K\) be a bounded perfect pseudo \(I\)-closed field. Then
\[\bdn(K) = \kappa_r.\]
\end{theorem}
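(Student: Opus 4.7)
I would prove the theorem by a simultaneous induction on the nodes $j \in I$, going from the leaves upward toward the root, establishing two parallel claims: (A) for every non-leaf $j$, any bounded perfect pseudo $I_j$-closed field $E$ has burden $\sum_{k \in I_j} \kappa_k$; and (B) $\bdn_j(\Aa^1) = \kappa_j$ computed in any model of $T_j$. For $j$ a leaf, $\kappa_j := \bdn(T_j) = \bdn_j(\Aa^1)$ by definition of $\kappa_j$ and of the burden of a theory, so (B) is immediate and there is nothing to say for (A).

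For the inductive step at a non-leaf $j$: first, (A) at $j$ follows by applying \cref{bdn PTC} to any bounded perfect pseudo $I_j$-closed field $E$, yielding $\bdn(E) = \sum_{k \in I_j} \bdn_k(\Aa^1)$, where each summand equals $\kappa_k$ by the inductive instance of (B) at $k$. Next, (B) at $j$: the theory $T_j$ is the Morleyization of the theory of algebraically maximal complete Henselian valued fields with residue a model of $T_{j,\k}$ and value group a model of $T_{j,\Gamma}$. By the cited results of Halevi--Hasson and Basarab, $T_j$ eliminates quantifiers relative to $\RV_1$ when $j$ is not finitely ramified and to $\bigcup_n \RV_n$ otherwise, and maximally complete models exist by construction. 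Applying \cref{bdn VF} (whose finiteness hypothesis on $\k/\k^{\star n}$ follows from boundedness of $E$ via Kummer theory) one obtains $\bdn_j(\Aa^1) = \max((\aleph_0\cdot)\bdn(T_{j,\k}), \bdn(T_{j,\Gamma}))$, which via (A) at $j$ matches the recursive definition of $\kappa_j$ exactly.

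To conclude, apply (A) at the root $r$: since $K$ is itself bounded perfect pseudo $I_r$-closed, $\bdn(K) = \sum_{j \in I_r} \kappa_j$. Now $T_{r,\Gamma}$ is the theory of the trivial group, so $\bdn(T_{r,\Gamma}) = 0$, and the root cannot be finitely ramified: the constant $c_r$ is only defined in the case $p_r = 0$ with daughter $p_j$ nonzero, which is excluded by the assumption that $p_r$ equals its daughter $p_j$. The max in the definition of $\kappa_r$ therefore collapses to $\sum_{j \in I_r} \kappa_j$, yielding $\bdn(K) = \kappa_r$.

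The main obstacle is verifying that the ``bounded perfect'' hypothesis in (A) is actually met by the residue fields encountered along the inductive descent. Boundedness descends via residue maps because the absolute Galois group of the residue of a Henselian valued field is a quotient (by the inertia subgroup) of the absolute Galois group of the field, so bounded Henselian fields have bounded residues. Perfection is automatic in residue characteristic zero and descends in positive residue characteristic from perfection of the valued field above. Pseudo $I_j$-closedness of the residue is built into the recursive tree structure of pseudo $I$-closure. One subtle bookkeeping point: (A) asks for the burden of an arbitrary bounded perfect model of $T_{j,\k}$, while \cref{bdn VF} refers to the burden of the residue of an elementarily equivalent maximally complete model, but these agree by invariance of burden under elementary equivalence within the class.
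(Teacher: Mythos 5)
Your proof follows the paper's own two-sentence induction essentially verbatim --- leaves to root, $\bdn(T_{i,\k}) = \sum_{j\in I_i} \kappa_j$ via \cref{bdn PTC}, and $\bdn(T_i) = \kappa_i$ via \cref{bdn VF} --- which you formalize as the mutually recursive claims (A) and (B), and the extra hypothesis-verification you supply (boundedness descending through the inertia sequence, finiteness of $\k/\k^{\star n}$ via Kummer theory, the bookkeeping of which model witnesses the burden) is a welcome expansion of what the paper leaves implicit. One caveat: your claim that perfection of the residue ``descends in positive residue characteristic from perfection of the valued field above'' is automatic only in equal characteristic; in mixed characteristic $(0,p)$ the field above has characteristic zero and its perfection gives nothing about the residue, so one must instead appeal to the constraints built into the recursive definition of $T_{i,\k}$ (or to the fact that in the paper's applications, e.g.\ \cref{burden PIC}, the relevant residues end up algebraically closed, real closed, or finite) --- a point on which the paper's own proof is equally silent.
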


\begin{proof}
We proceed by induction on \(i\). By \cref{bdn PTC}, \(\bdn(T_{i,\k}) =
\sum_{j\in I_i} \kappa_i\) (by \cref{bdn PTC}) and, by \cref{bdn VF},
\(\bdn(T_i) = \kappa_i\).
\end{proof}

\begin{corollary}\label{burden PIC}
Let \(K\) be a bounded perfect \(\PAC\), \(m\)-\(\PRC\) or \(m\)-\(\PpC\) field
(setting \(m=0\) if \(K\) is \(\PAC\)) and \(v_1,\ldots,v_l\) be valuations on
\(K\). Let \(n\) be the maximal number of non-dependent valuations among the
\(v_i\), that are also not dependent from the \(m\) orders (respectively
\(p\)-adic valuations) if \(K\) is \(\PRC\) (respectively \(\PpC\)). Then
\[\bdn(K,v_1,\ldots,v_l) = m + n.\]
\end{corollary}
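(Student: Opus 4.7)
The plan is to apply the preceding theorem by organizing the data into a suitable finite tree $I$ and then computing $\kappa_r$.

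First, I would build the tree $I$ from the coarsening structure of the collection of topologies on $K$: the $m$ orders (for $\PRC$) or $m$ $p$-adic valuations (for $\PpC$), together with the valuations $v_1,\ldots,v_l$. Any two dependent valuations share a unique finest common non-trivial coarsening, and similarly an order or a $p$-adic valuation can be dependent on a valuation through a convex subring; iterating these common coarsenings produces a canonical finite tree. The leaves are the $m+n$ maximally refined topologies (counted by the hypothesis on $n$); the internal nodes carry the successively coarser common coarsenings, with the root $r$ being the trivial valuation. At each internal node $i$, $T_{i,\Gamma}$ is the theory of the appropriate quotient of value groups, and $T_{i,\k}$ will be the theory of the residue field endowed with the induced finer topologies.

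The second step is to verify that $K$ equipped with this data is bounded perfect pseudo $I$-closed in the sense of the appendix. Boundedness and perfectness are immediate. For pseudo $I$-closure I would argue by induction on the height of the tree. At the root this follows from the $\PAC/\PRC/\PpC$ assumption combined with the independence built into the tree via \cref{free ec}. At each internal node $i$, I would use that the residue field of a Henselian valuation on a $\PAC/\PRC/\PpC$ field inherits the corresponding local-global property, and that the topologies induced on that residue field by the subtree above $i$ are, by construction, pairwise independent, so the hypotheses of \cref{free ec} apply again.

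Finally, I would compute $\kappa_r$ recursively along the tree. Each leaf contributes $\bdn(T_i)=1$, since RCF, $p$CF, and ACVF all have burden $1$. At each internal node the sum $\sum_{j\in I_i}\kappa_j$ is at least $1$, which dominates $\bdn(T_{i,\Gamma})$ (the value group theories involved are o-minimal or discrete, hence of burden $1$), so the $\max$ in the definition of $\kappa_i$ collapses to $\sum_j \kappa_j$. Therefore $\kappa_r$ equals the total number of leaves, namely $m+n$, and the preceding theorem yields $\bdn(K,v_1,\ldots,v_l)=m+n$.

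The main obstacle I anticipate is the inductive verification of pseudo $I$-closure at the internal nodes: one needs to argue that the totally $T$ and regularity conditions at the level of $K$ descend correctly to the residue field of each coarsening valuation in the tree, so that the local-global principle enjoyed by $K$ passes to the residue field at each level together with the correct boundedness data. Once this descent is established, the tree computation of $\kappa_r$ is routine.
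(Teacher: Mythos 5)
Your high-level plan matches the paper's proof: build a coarsening tree, verify that $K$ is pseudo $I$-closed, and then compute $\kappa_r$ by counting leaves. The leaf-counting step (each leaf contributes $1$, value group theories are dominated, so $\kappa_r$ is the number of leaves) is exactly what the paper does. You also correctly flag that the "main obstacle" is showing that the needed residual structure at internal nodes is in place.

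However, the way you propose to address that obstacle is where your argument has a gap. You write that "the residue field of a Henselian valuation on a $\PAC/\PRC/\PpC$ field inherits the corresponding local-global property," but none of the $v_i$ (nor their coarsenings) on $K$ is Henselian --- a $\PAC$ field cannot carry a non-trivial Henselian valuation unless it is separably closed. Moreover, the residue fields of those valuations do \emph{not} merely inherit $\PAC$/$\PRC$/$\PpC$-ness: they enjoy the much stronger property of being \emph{algebraically closed} (with divisible value group), which follows from \cref{PTC dense} via the density of $K$ in $K_\tau = \sep{K}$ in the $\PAC$ case. This is the precise fact the paper uses, and it makes the entire recursive verification trivial: an algebraically closed residue field is automatically a model of the residual theory $T_{i,\k}$ for each internal node, with no further inductive argument needed. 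The paper then checks pseudo $I$-closure once at the root via \cref{free ec} and notes that the resulting $\PIC$ structure is bi-interpretable with $(K,v_1,\ldots,v_l)$ --- a step your outline omits but which is needed to transfer the burden computation from the appendix's theorem to the structure $(K,v_1,\ldots,v_l)$ in the statement. Finally, for the $\PRC$/$\PpC$ cases you would also need to observe, as the paper does, that orders (resp.\ $p$-adic valuations) can only occur as leaves in the tree, which is what keeps the residue fields at internal nodes algebraically closed in those cases too.
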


\begin{proof} 
We assume \(K\) is \(\PAC\). Let \(I\) be the tree of valuations generated by
the \(v_i\), in other words the nodes of \(I\) are the valuation rings
\(\prod_{i\in J} \Val_i\), where \(J\subseteq I\) and \(\Val_i\) is the
valuation ring of \(v_i\) (and the trivial valuation at the root). For any node
\(i\in I\) we also write \(v_i\) for the associated valuation. Since \(K\) is
\(\PAC\), the residue field of any of the \(v_i\) is algebraically closed and
its value group is divisible --- this follows, for example, from \cref{PTC
dense}. We annotate \(I\) with a copy of \(\ACVF\) on each leaf and with the
theory of divisible ordered Abelian groups on each non-leave node (except for
the root), as well as the relevant characteristics and constants.

It follows from \cref{free ec} that \(K\models\PIC\) and this structure is
bi-interpretable with \((K,v_1,\ldots,v_n)\). By induction on \(i\) we see that
\(\kappa_i\) is the number of leaves below the node \(i\). The corollary
follows.

The \(\PRC\) and \(\PpC\) cases follow in a similar manner, noting that
orders (respectively \(p\)-adic valuations) can only be leaves in the tree.
\end{proof}

\sloppy
\printbibliography

\end{document}